\documentclass[12pt, english]{article}
\usepackage[utf8]{inputenc}
\usepackage{amsthm}
\usepackage{amssymb}
\usepackage[all]{xy}
\usepackage{amsmath}
\usepackage{tikz-cd}
\usepackage{tikz}
\usepackage{float}
\usepackage{bbm}
\usepackage{bm}

\tikzcdset{scale cd/.style={every label/.append style={scale=#1},
		cells={nodes={scale=#1}}}}
\setlength{\parindent}{0pt}

\usepackage{rotating}
\usepackage{caption, booktabs}
\usepackage{verbatim}
\usetikzlibrary{calc}

\usepackage{tikz}
\usetikzlibrary{patterns}

\usepackage{pdflscape}

\usepackage{mathtools}
\usepackage[english]{babel}

\usepackage{hyperref}
\usepackage{bookmark}

\newtheorem{definition}{Definition}[section]

\newtheorem{proposition}[definition]{Proposition}
\newtheorem{remark}[definition]{Remark}
\newtheorem{lemma}[definition]{Lemma}

\newtheorem{theorem}[definition]{Theorem}
\newtheorem{corollar}[definition]{Corollary}
\newtheorem{construction}[definition]{Construction}

\newtheorem{example}[definition]{Example}

\DeclareMathOperator{\Hom}{Hom}

\DeclareMathOperator{\Lie}{Lie}
\DeclareMathOperator{\Aut}{Aut}

\DeclareMathOperator{\Ext}{Ext}

\DeclareMathOperator{\Cone}{Cone}

\DeclareMathOperator{\Spec}{Spec}

\DeclareMathOperator{\codim}{codim}
\DeclareMathOperator{\Supp}{Supp}

\DeclareMathOperator{\Min}{Min}
\DeclareMathOperator{\Convhull}{Convhull}

\newcommand{\uproman}[1]{\uppercase\expandafter{\romannumeral#1}}

\newcommand{\RNum}[1]{\uppercase\expandafter{\romannumeral #1\relax}}

\begin{document}
	
	\title{The Kodaira-Spencer map for minimal toric hypersurfaces}
	\author{Julius Giesler \\ University of T\"ubingen}
	\date{\today}
	\maketitle
	\begin{abstract}
		In this article we study infinitesimal deformations of toric hypersurfaces. We introduce a Kodaira-Spencer map and compute its kernel. By introducing some new Laurent polynomials we make our computation as explicit as possible. This widely generalizes results of Griffiths for projective hypersurfaces.
	\end{abstract}
	

	\section{Introduction}
	We start with an $n$-dimensional lattice polytope $\Delta$ and a nondegenerate Laurent polynomial $f$ with Newton polytope $\Delta$ and set
	\begin{align*}
		Z_f := \{ f = 0 \} \subset T.
	\end{align*}
	By results from (\cite{Bat22}) there is (under a mild condition $F(\Delta) \neq \emptyset$) a projective toric variety $\mathbb{P}$ to a simplicial fan $\Sigma$, such that the closure $Y_f$ of $Z_f$ in $\mathbb{P}$ has at most terminal singularities and the canonical divisor $K_{Y_f}$ is nef. We say that $Y_f$ is a \textit{minimal model of } $Z_{f}$. \\
	The toric variety $\mathbb{P}$ does not depend on $f$. Therefore we vary $f$ over all such polynomials, write $f \in B := B(\Delta)$ to obtain a family of minimal toric hypersurfaces
	\[ \mathcal{X}:= \{(x,f) \in \mathbb{P} \times B \mid \, x \in Y_f \} \overset{pr_2}{\rightarrow}  B. \]
	Next to a tangent vector 
	\[ \Spec \, \mathbb{C}[\epsilon]/(\epsilon^2) \rightarrow B  \]
	at $f \in B$ we associate a first-order infinitesimal deformation of $Y_f$ (in $\mathcal{X}$) by base change along $v$. These infinitesimal deformations and all those in $\mathbb{P}$ are parametrized by \textit{Kodaira Spencer maps}
\begin{align*}
	& \kappa_{\mathbb{P},f}: H^0(Y,N_{Y/\mathbb{P}}) \rightarrow \Ext_{\mathcal{O}_{Y}}^1(\Omega_Y^1, \mathcal{O}_Y), \\
	& \kappa_f: \, \,  H^0(Y,N_{Y/\mathcal{X}}) \rightarrow \Ext_{\mathcal{O}_{Y}}^1(\Omega_Y^1, \mathcal{O}_Y).
\end{align*}
In fact $H^0(Y,N_{Y/\mathcal{X}}) \subset H^0(Y,N_{Y/\mathbb{P}})$ and $\kappa_f$ equals the restriction of $\kappa_{\mathbb{P},f}$ (see section \ref{section_number_of_mod_general_case}). In Theorem \ref{theorem_number_of_moduli} we identify the kernel of $\kappa_{\mathbb{P},f}$ with the Lie algebra of the automorphism group of $\mathbb{P}$:
\begin{theorem} \label{theorem_number_of_moduli_intro}
	Let $\Delta$ be an $n$-dimensional lattice polytope, where $n \geq 2$, with $l^*(\Delta) > 0$. If $n=2$ assume that $l^*(\Delta) \geq 2$. Then given $f \in B$
	\begin{align} 
		\ker(\kappa_{\mathbb{P},f}) \cong  \Lie \, \Aut(\mathbb{P})
	\end{align} 		
\end{theorem}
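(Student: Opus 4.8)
The plan is to exhibit $\kappa_{\mathbb{P},f}$ as a connecting homomorphism, which turns the computation of its kernel into a cohomology-vanishing problem on the ambient toric variety, and then to identify the answer with the Demazure--Cox description of $\Aut(\mathbb{P})$. Since $Y=Y_{f}$ is a Cartier divisor in the simplicial toric variety $\mathbb{P}$ it is a local complete intersection, so the conormal sequence
\[
0\longrightarrow N^{\vee}_{Y/\mathbb{P}}\longrightarrow \Omega^{1}_{\mathbb{P}}\otimes\mathcal{O}_{Y}\longrightarrow \Omega^{1}_{Y}\longrightarrow 0
\]
is exact (working with reflexive differentials on the singular varieties throughout). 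Taking the long exact sequence of $\Ext_{\mathcal{O}_{Y}}$-groups in the first argument, the boundary map
\[
H^{0}(Y,N_{Y/\mathbb{P}})=\Hom_{\mathcal{O}_{Y}}(N^{\vee}_{Y/\mathbb{P}},\mathcal{O}_{Y})\longrightarrow \Ext^{1}_{\mathcal{O}_{Y}}(\Omega^{1}_{Y},\mathcal{O}_{Y})
\]
is precisely $\kappa_{\mathbb{P},f}$, by the standard description of first-order deformations of $Y$ inside $\mathbb{P}$ recalled in Section~\ref{section_number_of_mod_general_case}. By exactness this gives
\[
\ker\kappa_{\mathbb{P},f}\;=\;\operatorname{im}\!\Bigl(\Hom_{\mathcal{O}_{\mathbb{P}}}(\Omega^{1}_{\mathbb{P}},\mathcal{O}_{Y})\longrightarrow H^{0}(Y,N_{Y/\mathbb{P}})\Bigr)\;\cong\;\Hom_{\mathcal{O}_{\mathbb{P}}}(\Omega^{1}_{\mathbb{P}},\mathcal{O}_{Y})\big/H^{0}(Y,T_{Y}),
\]
where $T_{Y}=\mathcal{H}om_{\mathcal{O}_{Y}}(\Omega^{1}_{Y},\mathcal{O}_{Y})$.

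For the right-hand side I would bring in the toric geometry. Using the generalized Euler sequence
\[
0\longrightarrow \Omega^{1}_{\mathbb{P}}\longrightarrow \bigoplus_{\rho\in\Sigma(1)}\mathcal{O}_{\mathbb{P}}(-D_{\rho})\longrightarrow \Cl(\mathbb{P})\otimes_{\mathbb{Z}}\mathcal{O}_{\mathbb{P}}\longrightarrow 0,
\]
I dualize and tensor with $\mathcal{O}_{\mathbb{P}}(-Y)$; modulo $\mathcal{E}xt$-sheaf terms supported on $\operatorname{Sing}\mathbb{P}$ (handled below) this reduces the comparison of $\Hom_{\mathcal{O}_{\mathbb{P}}}(\Omega^{1}_{\mathbb{P}},\mathcal{O}_{Y})$ with $H^{0}(\mathbb{P},\mathcal{T}_{\mathbb{P}})$, for the reflexive tangent sheaf $\mathcal{T}_{\mathbb{P}}=\mathcal{H}om_{\mathcal{O}_{\mathbb{P}}}(\Omega^{1}_{\mathbb{P}},\mathcal{O}_{\mathbb{P}})$, to the vanishing of $H^{i}(\mathbb{P},\mathcal{O}_{\mathbb{P}}(D_{\rho}-Y))$ and $H^{i}(\mathbb{P},\mathcal{O}_{\mathbb{P}}(-Y))$ for $i=0,1$. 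Here is where $l^{*}(\Delta)>0$ enters: choosing a lattice point in the interior of $\Delta$, the divisor $D_{\Delta}$, whose class is that of $Y_{f}$, equals $\sum_{\rho}a_{\rho}D_{\rho}$ with every coefficient $a_{\rho}\ge 1$, so that each of $-Y$ and $D_{\rho}-Y$ is anti-effective; together with the combinatorial cohomology formula for line bundles on complete toric varieties and $H^{1}(\mathbb{P},\mathcal{O}_{\mathbb{P}})=0$, the required vanishings follow. This produces $H^{0}(\mathbb{P},\mathcal{T}_{\mathbb{P}})\xrightarrow{\ \sim\ }\Hom_{\mathcal{O}_{\mathbb{P}}}(\Omega^{1}_{\mathbb{P}},\mathcal{O}_{Y})$, and since $H^{0}(\mathbb{P},\mathcal{T}_{\mathbb{P}})=\Lie\Aut(\mathbb{P})$ for a complete simplicial toric variety, we are reduced to proving $H^{0}(Y,T_{Y})=0$.

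For this last vanishing I would argue that the numerical hypotheses force $Y_{f}$ to be a minimal model of non-negative Kodaira dimension, either of general type or of strict Calabi--Yau type, and such a variety carries no infinitesimal automorphism. In the curve case $n=2$ this is literally the genus bound $g(Y_{f})=l^{*}(\Delta)$, so $l^{*}(\Delta)\ge 2$ means $g(Y_{f})\ge 2$ and hence $H^{0}(Y_{f},T_{Y_{f}})=H^{0}(Y_{f},\omega_{Y_{f}}^{-1})=0$; for $n\ge 3$ one uses $T_{Y_{f}}\cong\Omega^{\dim Y_{f}-1}_{Y_{f}}\otimes\omega_{Y_{f}}^{-1}$ with $\omega_{Y_{f}}$ nef together with the Danilov--Khovanskii vanishing $h^{p,0}(Y_{f})=0$ for $0<p<\dim Y_{f}$. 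This also explains why the hypotheses are needed: for smaller $\Delta$ the hypersurface $Y_{f}$ would be elliptic, abelian, or Fano, all of which have nonzero global vector fields, and the clean identification breaks. Assembling the steps gives $\ker\kappa_{\mathbb{P},f}\cong\Lie\Aut(\mathbb{P})$.

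I expect the main obstacle to be the toric cohomology vanishing, in particular $H^{1}(\mathbb{P},\mathcal{T}_{\mathbb{P}}\otimes\mathcal{O}_{\mathbb{P}}(-Y))=0$ in the borderline cases, which is exactly where $l^{*}(\Delta)>0$ (and $l^{*}(\Delta)\ge 2$ when $n=2$) is indispensable; a secondary, more bookkeeping-type difficulty is that $\Omega^{1}_{\mathbb{P}},\Omega^{1}_{Y}$ and their duals are not locally free on the singular $\mathbb{P}$ and $Y_{f}$, so one must check that the $\mathcal{E}xt$-sheaves in the long exact sequences, supported on the singular loci (of codimension $\ge 2$ in $\mathbb{P}$ by simpliciality and $\ge 3$ in $Y_{f}$ by terminality), do not affect the $H^{0}$ and $H^{1}$ in play; this uses normality and $\mathbb{Q}$-factoriality. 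It is here that passing to the Cox ring $S$ of $\mathbb{P}$ is efficient: writing $Y_{f}=V(F)$ with $F$ homogeneous of degree $\beta=[Y_{f}]$, one identifies $\ker\kappa_{\mathbb{P},f}$ with the degree-$\beta$ part of the Jacobian ideal of $F$ modulo $\mathbb{C}\cdot F$ and its linear syzygies, made explicit via the auxiliary Laurent polynomials announced in the abstract; the comparison with $\Lie\Aut(\mathbb{P})$ then becomes the transparent matching of this graded piece against the Lie algebra of the big torus plus one one-dimensional summand for each Demazure root of $\Sigma$, and the cohomology vanishings turn into finite combinatorial conditions on $\Delta$.
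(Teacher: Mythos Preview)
Your skeleton matches the paper's: both identify $\ker\kappa_{\mathbb{P},f}$ with $H^0(Y,T_{\mathbb{P}\vert Y}^{**})/H^0(Y,T_Y)$ via the normal/conormal sequence, kill $H^0(Y,T_Y)$ by the genus/Hodge argument you sketch, and then reduce the identification $H^0(Y,T_{\mathbb{P}\vert Y}^{**})\cong H^0(\mathbb{P},T_{\mathbb{P}})=\Lie\Aut(\mathbb{P})$ to the vanishing of $H^k(\mathbb{P},T_{\mathbb{P}}\otimes_r\mathcal{O}(-Y))$ for $k=0,1$. The divergence, and the gap, is in how you propose to obtain that vanishing.

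First, you assume $Y\subset\mathbb{P}$ is Cartier. It need not be: by Corollary~\ref{proposition_criterion_Y_Cartier_divisor} this holds exactly when $C(\Delta)$ is a lattice polytope, and the paper cites an example where it is not. The paper absorbs this by the multiplication morphism (Construction~\ref{construction_frob_split}), which embeds $H^k(\mathbb{P},\Omega_{\mathbb{P}}^q\otimes_r\mathcal{O}(D))$ into $H^k(\mathbb{P},\Omega_{\mathbb{P}}^q\otimes\mathcal{O}(mD))$ with $mD$ Cartier; your conormal sequence and Euler-sequence manipulations need a substitute for this step.

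Second, and more substantively, your Euler-sequence reduction leaves you needing $H^1(\mathbb{P},\mathcal{O}(D_\rho-Y))=0$ for every ray $\rho$, and you justify this only by ``anti-effectivity together with the combinatorial cohomology formula''. Anti-effectivity gives $H^0=0$ but says nothing about $H^1$: on a toric variety $H^1(\mathcal{O}(-E))$ with $E$ effective can well be nonzero unless $E$ is nef, and $Y-D_\rho$ is not nef in general. The paper avoids this entirely by not passing through the Euler sequence. Instead it rewrites
\[
T_{\mathbb{P}}\otimes_r\mathcal{O}(-Y)\;\cong\;\Omega_{\mathbb{P}}^{\,n-1}\otimes_r\mathcal{O}(-Y-K_{\mathbb{P}}),
\]
applies Serre duality, and invokes Mavlyutov's Bott-type vanishing (Theorem~\ref{theorem_van_Mavlyutov}) for $\Omega_{\mathbb{P}}^{1}\otimes\mathcal{O}(m(Y+K_{\mathbb{P}}))$ with $Y+K_{\mathbb{P}}$ nef; this kills $H^{n-k}$ for $n-k>1$, hence $H^k(T_{\mathbb{P}}\otimes_r\mathcal{O}(-Y))=0$ for $k=0,1$ when $n\ge 3$. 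For $n=2$ the case $k=1$ is handled by the explicit formula in \cite[Cor.~2.7]{Mav08}, using that the polytope attached to $Y+K_{\mathbb{P}}$ is $F(\Delta)$, and this is precisely where $l^*(\Delta)\ge 2$ (so $F(\Delta)$ is not a point) enters. Your proposal correctly flags this vanishing as the main obstacle but supplies the wrong tool for it; the Bott-type vanishing for $\Omega^q$ twisted by a nef divisor is the missing ingredient.
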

where $l^*(\Delta)$ denotes the number of interior lattice points of $\Delta$. The proof is cohomological. Concerning the cokernel of $\kappa_{\mathbb{P},f}$ we prove:
\begin{corollar} \label{corollar_all_inf_def_come_from..._intro}
	Given the conditions of the theorem if $n \geq 4$ then the following sequence is exact
	\begin{align*}
		0 \rightarrow Im(\kappa_{\mathbb{P},f}) \rightarrow \Ext_{\mathcal{O}_{Y}}^1(\Omega_{Y}^1, \mathcal{O}_Y) \rightarrow \Ext_{\mathcal{O}_{\mathbb{P}}}^1(\Omega_{\mathbb{P}}^1, \mathcal{O}_{\mathbb{P}}) \rightarrow 0 
	\end{align*}
	that is all infinitesimal deformations of $Y_f$ arise from $Im(\kappa_{\mathbb{P},f})$ or from infinitesimal deformations of $\mathbb{P}$.
\end{corollar}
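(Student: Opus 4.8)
The plan is to realize the desired exact sequence as a piece of the long exact cohomology sequence of a short exact sequence of sheaves on $Y = Y_f$ relating $\Omega_Y^1$, $\Omega_{\mathbb{P}}^1|_Y$, and the conormal bundle $N_{Y/\mathbb{P}}^{\vee}$. Concretely, the conormal exact sequence
\[
0 \to N_{Y/\mathbb{P}}^{\vee} \to \Omega_{\mathbb{P}}^1|_Y \to \Omega_Y^1 \to 0
\]
(which holds here because $Y$ is a Cartier divisor in the simplicial — hence $\mathbb{Q}$-factorial — toric variety $\mathbb{P}$, and because $Y$ has only terminal, in particular normal and Cohen-Macaulay, singularities, so that $\Omega^1$ behaves well enough) should be dualized. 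Applying $\mathcal{H}om_{\mathcal{O}_Y}(-,\mathcal{O}_Y)$ and passing to the long exact sequence in $\Ext^{\bullet}_{\mathcal{O}_Y}$ gives
\[
0 \to \Hom(\Omega_Y^1,\mathcal{O}_Y) \to \Hom(\Omega_{\mathbb{P}}^1|_Y,\mathcal{O}_Y) \to \Hom(N_{Y/\mathbb{P}}^{\vee},\mathcal{O}_Y) \to \Ext^1(\Omega_Y^1,\mathcal{O}_Y) \to \Ext^1(\Omega_{\mathbb{P}}^1|_Y,\mathcal{O}_Y) \to \cdots
\]
Now $\Hom(N_{Y/\mathbb{P}}^{\vee},\mathcal{O}_Y) = H^0(Y,N_{Y/\mathbb{P}})$, and the connecting-type composite of this group into $\Ext^1(\Omega_Y^1,\mathcal{O}_Y)$ is precisely $\kappa_{\mathbb{P},f}$ by its very construction; so $Im(\kappa_{\mathbb{P},f})$ is the image of $H^0(Y,N_{Y/\mathbb{P}})$ in $\Ext^1(\Omega_Y^1,\mathcal{O}_Y)$ and exactness at that spot is automatic. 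It therefore remains to show two things: (i) the natural map $\Ext^1(\Omega_{\mathbb{P}}^1|_Y,\mathcal{O}_Y) \to \Ext^1(\Omega_{\mathbb{P}}^1,\mathcal{O}_{\mathbb{P}})$ — or rather an identification of the relevant $\Ext^1$ on $Y$ with $\Ext^1_{\mathcal{O}_{\mathbb{P}}}(\Omega_{\mathbb{P}}^1,\mathcal{O}_{\mathbb{P}})$ — is an isomorphism, and (ii) this map (equivalently the last arrow in the displayed sequence) is surjective while the map into it from $\Ext^1(\Omega_Y^1,\mathcal{O}_Y)$ has cokernel zero, i.e. $\Ext^1(\Omega_Y^1,\mathcal{O}_Y)\to \Ext^1_{\mathcal{O}_{\mathbb{P}}}(\Omega_{\mathbb{P}}^1,\mathcal{O}_{\mathbb{P}})$ is surjective and the sequence continues with a vanishing term.

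To run this, I would use the second short exact sequence $0 \to \mathcal{I}_Y/\mathcal{I}_Y^2 \to \Omega_{\mathbb{P}}^1|_Y \to \Omega_Y^1 \to 0$ together with the restriction sequence $0 \to \Omega_{\mathbb{P}}^1(-Y) \to \Omega_{\mathbb{P}}^1 \to \Omega_{\mathbb{P}}^1|_Y \to 0$ on $\mathbb{P}$, dualize both, and compare the resulting long exact sequences. The comparison of $\Ext^i_{\mathcal{O}_Y}(\Omega_{\mathbb{P}}^1|_Y,\mathcal{O}_Y)$ with $\Ext^i_{\mathcal{O}_{\mathbb{P}}}(\Omega_{\mathbb{P}}^1,\mathcal{O}_{\mathbb{P}})$ for $i=0,1$ then reduces to vanishing statements for the sheaf cohomology groups $H^i(\mathbb{P},\mathcal{T}_{\mathbb{P}}(-Y))$ and $H^i(\mathbb{P},\mathcal{E}xt^{\bullet})$ twisted by $-Y$; here $-Y$ is a negative multiple of an ample class (as $K_{Y}$ is nef but $Y$ is big on $\mathbb{P}$), so Bott-type / Kodaira-Akizuki-Nakano-type vanishing on the toric variety $\mathbb{P}$, combined with the condition $n\geq 4$ to push the relevant indices past the danger zone, should kill all the unwanted terms. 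This is exactly the step where $n\geq 4$ enters: the vanishing $H^1(\mathbb{P},\mathcal{T}_{\mathbb{P}}\otimes\mathcal{O}(-Y)) = 0$ and the vanishing of an $H^2$ term controlling surjectivity need the dimension to be large enough that the hypersurface degree forces the twist deep enough into the vanishing range; for $n=2,3$ these groups need not vanish, which is why the corollary is only claimed for $n\geq 4$.

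The last arrow, surjectivity onto $\Ext^1_{\mathcal{O}_{\mathbb{P}}}(\Omega_{\mathbb{P}}^1,\mathcal{O}_{\mathbb{P}})$, then follows because the next term in the long exact sequence is an $\Ext^2$ or an $H^{\geq 1}$ group that the same vanishing results annihilate, and exactness on the left (injectivity of $Im(\kappa_{\mathbb{P},f})\hookrightarrow \Ext^1(\Omega_Y^1,\mathcal{O}_Y)$) is trivial. Finally I would record the geometric interpretation: $\Ext^1_{\mathcal{O}_{\mathbb{P}}}(\Omega_{\mathbb{P}}^1,\mathcal{O}_{\mathbb{P}})$ is the space of first-order infinitesimal deformations of $\mathbb{P}$ itself, and the surjection sends an infinitesimal deformation of $Y_f$ to the induced infinitesimal deformation of the ambient $\mathbb{P}$, with kernel exactly those deformations that keep $\mathbb{P}$ fixed, i.e. $Im(\kappa_{\mathbb{P},f})$ — this is the content of the displayed exact sequence. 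The main obstacle is unquestionably step (i)–(ii): establishing the precise toric vanishing theorems for $\Omega_{\mathbb{P}}^p\otimes\mathcal{O}(-Y)$ in the right degrees on a merely simplicial (not smooth) $\mathbb{P}$, which may require passing to a resolution or using the theory of the toric logarithmic de Rham complex, and carefully tracking where $n\geq 4$ is genuinely needed versus merely convenient.
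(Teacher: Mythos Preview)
Your overall architecture matches the paper's: extend the tangent/conormal sequence for $Y\subset\mathbb{P}$ one step further, identify the connecting map with $\kappa_{\mathbb{P},f}$, and then prove that the restriction map on $H^1$ of the ambient tangent sheaf is an isomorphism by killing $H^1$ and $H^2$ of $T_{\mathbb{P}}\otimes\mathcal{O}(-Y)$ via toric vanishing. The specific vanishing result the paper invokes is Mavlyutov's theorem (Theorem~\ref{theorem_van_Mavlyutov}): after writing $T_{\mathbb{P}}\otimes_r\mathcal{O}(-Y)\cong\Omega_{\mathbb{P}}^{n-1}\otimes_r\mathcal{O}(-Y-K_{\mathbb{P}})$ and applying Serre duality, one needs $H^{n-k}(\mathbb{P},\Omega_{\mathbb{P}}^1\otimes\mathcal{O}(mY+mK_{\mathbb{P}}))=0$ for $k=1,2$, which Mavlyutov gives precisely when $n-k>1$, i.e. $n\geq 4$. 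So your intuition about where $n\geq 4$ enters is exactly right.

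There is, however, a genuine gap in your setup. You write that the conormal sequence is exact ``because $Y$ is a Cartier divisor in the simplicial --- hence $\mathbb{Q}$-factorial --- toric variety $\mathbb{P}$'': this is false. $\mathbb{Q}$-factorial only guarantees that $Y$ is $\mathbb{Q}$-Cartier, and the paper emphasizes (Corollary~\ref{proposition_criterion_Y_Cartier_divisor} and the remark after it) that $Y$ is Cartier if and only if $C(\Delta)$ is a lattice polytope, which can fail. Consequently neither the conormal sequence nor the restriction sequence $0\to\Omega_{\mathbb{P}}^1(-Y)\to\Omega_{\mathbb{P}}^1\to\Omega_{\mathbb{P}}^1|_Y\to 0$ is available as written, and the sheaves you manipulate are reflexive rather than locally free.

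The paper circumvents this by never working globally on $\mathbb{P}$ or $Y$: it passes to the smooth open $V\subset\mathbb{P}$ (so $U=V\cap Y$ is a genuine Cartier divisor in a smooth variety), runs the tangent-sheaf long exact sequence there, and then uses local cohomology (Theorem~\ref{theorem_Grothendieck_van_local_coh}) together with the MCM property of the relevant sheaves to transport the computations back to $\mathbb{P}$. To apply Mavlyutov's vanishing, which requires Cartier twists, it uses the toric multiplication morphism (Construction~\ref{construction_frob_split}) to embed the cohomology of the $\mathbb{Q}$-Cartier twist $\mathcal{O}(-Y-K_{\mathbb{P}})$ into that of a Cartier multiple. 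Your closing paragraph correctly flags the simplicial-versus-smooth issue as the crux; what you are missing is that the resolution is not to pass to a smooth model but to combine local cohomology with the Frobenius-type splitting, and that the Cartier hypothesis on $Y$ must be dropped entirely.
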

A more detailed description of the second term  is a topic of current research (see \cite{IlTu18}, \cite{Pet20}) and (\cite{Mav03}) for the case that $Y_f$ is Calabi-Yau.
\leavevmode
\\ \\
To be more explicit on $\ker(\kappa_{\mathbb{P},f})$ we recall the definition of the \textit{canonical closure} $C(\Delta)$ of $\Delta$, a polytope that might be slightly larger than $\Delta$. Given $P \subset M_{\mathbb{R}}$ let $L(P)$ be the $\mathbb{C}$-vector space with basis $x^m$, $m \in P \cap M$. Then
\begin{align*}
	& H^0(Y,N_{Y/\mathcal{X}}) \cong L(\Delta) / \mathbb{C} \cdot f \\
	& H^0(Y,N_{Y/\mathbb{P}}) \cong L(C(\Delta)) / \mathbb{C} \cdot f.
\end{align*}

We apply results from (\cite{BG99}) to identify a basis of $\ker(\kappa_{\mathbb{P},f})$ explicitly with certain Laurent polynomials having support on $C(\Delta)$ (Corollary \ref{theorem_basis_ker_kappa_P_subset_LCDelta}):
\begin{corollar} \label{theorem_basis_ker_kappa_P_subset_LCDelta_intro}
	Given the conditions of Theorem \ref{theorem_number_of_moduli_intro}, let 
	\begin{align*}
		f = \sum\limits_{m \in M \cap \Delta} a_m x^m \in B
	\end{align*}
	Then
	\begin{align*}
	\ker(\kappa_{\mathbb{P},f}) \cong \Big\langle \, x_i \cdot \frac{\partial f}{\partial x_i} \mid i=1,...,n \, \Big\rangle \quad \oplus \bigoplus\limits_{\alpha \in R(N,\Sigma)} w_{-\alpha}(f) \cdot \mathbb{C} 
	\end{align*}
\end{corollar}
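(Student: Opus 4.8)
The plan is to derive this from Theorem~\ref{theorem_number_of_moduli_intro} together with the explicit description of $\Aut(\mathbb{P})$ provided by \cite{BG99}. The first step is to make the isomorphism of Theorem~\ref{theorem_number_of_moduli_intro} concrete. Since $\mathbb{P}$ is complete and simplicial, its class group is finitely generated, so the identity component $\Aut^{\circ}(\mathbb{P})$ fixes the class of $Y$ and hence acts linearly on $H^0(\mathbb{P},\mathcal{O}_{\mathbb{P}}(Y))\cong L(C(\Delta))$; the structure of this action is precisely what \cite{BG99} makes explicit. Differentiating, every $\xi\in\Lie\Aut(\mathbb{P})=H^0(\mathbb{P},T_{\mathbb{P}})$ carries $f$ to a Laurent polynomial $\xi\cdot f\in L(C(\Delta))$, and I claim that $\xi\mapsto[\xi\cdot f]\in L(C(\Delta))/\mathbb{C}\cdot f\cong H^0(Y,N_{Y/\mathbb{P}})$ is precisely the isomorphism onto $\ker(\kappa_{\mathbb{P},f})$ given by Theorem~\ref{theorem_number_of_moduli_intro}. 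One inclusion is automatic: $[\xi\cdot f]$ is the velocity at the identity of the orbit $g\mapsto g\cdot Y_f$, which is a trivial abstract deformation, hence lies in $\ker\kappa_{\mathbb{P},f}$; the reverse inclusion and the injectivity of $\xi\mapsto[\xi\cdot f]$ then follow from the equality of dimensions in Theorem~\ref{theorem_number_of_moduli_intro} (or, avoiding a circular appeal to that count, from the cohomology exact sequences in the proof of that theorem, which identify $\ker\kappa_{\mathbb{P},f}$ with the image of the restriction map $H^0(\mathbb{P},T_{\mathbb{P}})\to H^0(Y,N_{Y/\mathbb{P}})$).

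The second step is to feed in a basis of $\Lie\Aut(\mathbb{P})$. By \cite{BG99} — equivalently, by Demazure's classical description of toric automorphisms — $\Aut^{\circ}(\mathbb{P})$ is generated by the big torus $T$ and, for each root $\alpha\in R(N,\Sigma)$, a one-parameter additive subgroup $U_{\alpha}\cong\mathbb{G}_a$, so that
\[
\Lie\Aut(\mathbb{P})=\Lie T\ \oplus\ \bigoplus_{\alpha\in R(N,\Sigma)}\mathbb{C}\cdot w_{-\alpha},
\]
where $\Lie T$ has basis $x_1\tfrac{\partial}{\partial x_1},\dots,x_n\tfrac{\partial}{\partial x_n}$ and $w_{-\alpha}$ denotes the generator of $\Lie U_{\alpha}$ normalised so that $w_{-\alpha}\cdot f$ equals the Laurent polynomial $w_{-\alpha}(f)$ introduced earlier. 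Transporting this basis through the isomorphism of the first step sends $x_i\tfrac{\partial}{\partial x_i}$ to $x_i\tfrac{\partial f}{\partial x_i}$ and $w_{-\alpha}$ to $w_{-\alpha}(f)$; here one must check that $w_{-\alpha}(f)$ does have support contained in $C(\Delta)$, which is exactly where the canonical closure is needed and explains why $C(\Delta)$, not $\Delta$, appears. Since a linear isomorphism carries a basis to a basis and a direct sum to a direct sum, we conclude
\[
\ker(\kappa_{\mathbb{P},f})=\Big\langle\, x_i\cdot\tfrac{\partial f}{\partial x_i}\ \Big|\ i=1,\dots,n\,\Big\rangle\ \oplus\ \bigoplus_{\alpha\in R(N,\Sigma)}w_{-\alpha}(f)\cdot\mathbb{C}
\]
inside $L(C(\Delta))/\mathbb{C}\cdot f$, which is the claim. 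In passing this also recovers $\dim\langle x_i\,\partial f/\partial x_i\rangle=n$, since a relation $\sum_i c_i\,x_i\,\partial f/\partial x_i\in\mathbb{C}\cdot f$ forces the affine function $m\mapsto\langle m,c\rangle-\mathrm{const}$ to vanish on the vertices of the $n$-dimensional polytope $\Delta$, whence $c=0$.

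I expect the first step to be the main obstacle: reconciling the cohomologically constructed isomorphism of Theorem~\ref{theorem_number_of_moduli_intro} with the geometric map $\xi\mapsto[\xi\cdot f]$, and in particular establishing injectivity of that map — equivalently, that no nontrivial one-parameter subgroup of $\Aut(\mathbb{P})$ preserves $Y_f$ — without circularity. A secondary, more computational point is matching the root data used in \cite{BG99} with the Demazure roots $R(N,\Sigma)$ and verifying $w_{-\alpha}(f)\in L(C(\Delta))$; both rely on the combinatorics of the canonical closure set up earlier in the paper, and once they are in place the remainder is routine linear algebra.
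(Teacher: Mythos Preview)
Your proposal is correct and follows essentially the same route as the paper: the paper identifies $\ker(\kappa_{\mathbb{P},f})$ with the image of the map $H^0(\mathbb{P},T_{\mathbb{P}})\to H^0(Y,N_{Y/\mathbb{P}})$ coming from the tangent sheaf sequence (this is exactly your ``$\xi\mapsto[\xi\cdot f]$'', with injectivity supplied by $H^0(Y,T_Y)=0$), and then applies the \cite{BG99} basis of $\Lie\Aut(\mathbb{P})$---together with the equality $R(N,\Sigma)=R(N,\Sigma_{C(\Delta)})$---to $f$ to obtain the displayed Laurent polynomials. Your anticipated ``obstacles'' are precisely the points the paper handles via the exact sequence in Section~\ref{section_Kod_Spencer_maps}, Lemma~\ref{theorem_number_of_moduli_2}, and Lemma~\ref{lemma_compare_roots_Sigma_CDelta}.
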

where $R(N,\Sigma) \subset M$ denote the \textit{(Demazure) roots} of $\Sigma$ and $w_{-\alpha}(f)$ denote certain Laurent polynomials. Concerning the kernel of $\kappa_f$ we prove
\begin{theorem} \label{theorem_numb_mod_not_can_closed_intro}
	Given the conditions of Theorem \ref{theorem_number_of_moduli_intro} then
	\begin{align*}
	\ker(\kappa_{f}) \cong \Big\langle \, x_i \cdot \frac{\partial f}{\partial x_i} \mid i=1,...,n \, \Big\rangle \quad \oplus \bigoplus\limits_{\alpha \in R(N,\Sigma_{\Delta})} w_{-\alpha}(f) \cdot \mathbb{C}
\end{align*}
\end{theorem}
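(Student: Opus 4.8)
The plan is to deduce Theorem \ref{theorem_numb_mod_not_can_closed_intro} from Corollary \ref{theorem_basis_ker_kappa_P_subset_LCDelta_intro} by intersecting with the smaller space $H^0(Y,N_{Y/\mathcal{X}}) \cong L(\Delta)/\mathbb{C}\cdot f \subset L(C(\Delta))/\mathbb{C}\cdot f \cong H^0(Y,N_{Y/\mathbb{P}})$. Since we are told in the introduction that $\kappa_f$ is the restriction of $\kappa_{\mathbb{P},f}$ to this subspace, we have the tautological identity
\[
\ker(\kappa_f) \;=\; \ker(\kappa_{\mathbb{P},f}) \cap \bigl(L(\Delta)/\mathbb{C}\cdot f\bigr).
\]
So the whole content is to show that, under this intersection, the explicit basis of $\ker(\kappa_{\mathbb{P},f})$ from Corollary \ref{theorem_basis_ker_kappa_P_subset_LCDelta_intro} restricts to exactly the analogous basis with $R(N,\Sigma)$ replaced by $R(N,\Sigma_\Delta)$, where $\Sigma_\Delta$ is the normal fan of $\Delta$ itself (as opposed to the fan $\Sigma$ adapted to the canonical closure $C(\Delta)$).

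First I would check that the "toric part" $\langle x_i\,\partial f/\partial x_i \mid i=1,\dots,n\rangle$ already lies in $L(\Delta)/\mathbb{C}\cdot f$: indeed each $x_i\,\partial f/\partial x_i = \sum_{m\in M\cap\Delta} \langle m, e_i\rangle a_m x^m$ is supported on $\Delta$, so this summand survives the intersection unchanged. The real work is the second summand: I must show that for a Demazure root $\alpha$, the Laurent polynomial $w_{-\alpha}(f)$ has support contained in $\Delta$ precisely when $\alpha$ is a root of the normal fan $\Sigma_\Delta$, and that when $\alpha\in R(N,\Sigma)\setminus R(N,\Sigma_\Delta)$ the class of $w_{-\alpha}(f)$ is not contained in $L(\Delta)/\mathbb{C}\cdot f$ (not even modulo $f$ and modulo the toric part). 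For this I would recall the explicit shape of $w_{-\alpha}(f)$ from the construction referenced before Corollary \ref{theorem_basis_ker_kappa_P_subset_LCDelta}, together with the combinatorial characterization of Demazure roots: $\alpha\in R(N,\Sigma)$ iff there is a ray generator $v_\rho$ with $\langle \alpha, v_\rho\rangle = -1$ and $\langle \alpha, v_{\rho'}\rangle \ge 0$ for all other rays $\rho'$. Since the rays of $\Sigma$ correspond to facets of $C(\Delta)$ and the rays of $\Sigma_\Delta$ to facets of $\Delta$, a root of $\Sigma_\Delta$ translates the polytope $\Delta$ into itself along the distinguished facet direction, whereas a root of $\Sigma$ that is not a root of $\Sigma_\Delta$ pushes $\Delta$ partly outside itself (into $C(\Delta)$). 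I would make this precise by showing $w_{-\alpha}(f)$ is, up to the relations already quotiented out, the "derivative of $f$ in the direction of the one-parameter subgroup attached to $\alpha$", whose Newton polytope is $\Delta$ shifted by $\alpha$ intersected appropriately — contained in $\Delta$ iff $\alpha$ is a root of $\Sigma_\Delta$.

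The main obstacle I anticipate is the second half: ruling out the possibility that some $\mathbb{C}$-linear combination of the $w_{-\alpha}(f)$ with $\alpha\notin R(N,\Sigma_\Delta)$, together with elements of the toric part and multiples of $f$, lands back inside $L(\Delta)$ even though no single $w_{-\alpha}(f)$ does. Equivalently, I need the images of the classes $\{w_{-\alpha}(f)\}_{\alpha\in R(N,\Sigma)\setminus R(N,\Sigma_\Delta)}$ in the quotient $L(C(\Delta))/\bigl(L(\Delta)+\mathbb{C}\cdot f+\langle x_i\partial_i f\rangle\bigr)$ to be linearly independent. I would handle this by tracking leading terms: for each such $\alpha$ pick a monomial $x^{m_\alpha}$ with $m_\alpha\in C(\Delta)\setminus\Delta$ occurring in $w_{-\alpha}(f)$ (necessarily lying on the facet of $C(\Delta)$ dual to the ray realizing $\langle\alpha,\cdot\rangle=-1$), argue these monomials are distinct across distinct $\alpha$ — using that distinct roots single out distinct rays, or distinct coefficients along one ray — and conclude independence. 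An alternative, cleaner route, if the earlier sections provide it, is to invoke directly the decomposition of $\Lie\Aut(\mathbb{P})$ into the torus part plus root spaces $\bigoplus_{\alpha\in R(N,\Sigma)}\mathbb{C}$ (Demazure's structure theorem), observe that $\Lie\Aut$ of the toric variety $\mathbb{P}_\Delta$ attached to $\Delta$ is the sub-Lie-algebra with root spaces indexed by $R(N,\Sigma_\Delta)$, and match these identifications with the isomorphisms $\ker\kappa_{\mathbb{P},f}\cong\Lie\Aut(\mathbb{P})$ and $\ker\kappa_f\cong\Lie\Aut(\mathbb{P}_\Delta)$ from Theorem \ref{theorem_number_of_moduli_intro}; then the statement is just unwinding which root vectors preserve the subspace $L(\Delta)$.
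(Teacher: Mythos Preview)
Your main approach is essentially the paper's: reduce to $\ker(\kappa_f)=\ker(\kappa_{\mathbb{P},f})\cap L(\Delta)$ (this is the paper's Proposition \ref{proposition_ker_kappa_f_intersection_with_Newton_pol}), then check combinatorially which $w_{-\alpha}(f)$ land in $L(\Delta)$ and that the remaining ones are independent in $L(C(\Delta))/L(\Delta)$. Note that since $f$ and the $x_i\partial_i f$ already lie in $L(\Delta)$, independence in $L(C(\Delta))/L(\Delta)$ is all you need; the larger quotient you wrote is unnecessary.

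Two places where your sketch is loose compared with the paper. First, the monomials $x^{m-\alpha}$ witnessing $w_{-\alpha}(f)\notin L(\Delta)$ do \emph{not} lie on the facet $\Gamma_{-\alpha}$ of $C(\Delta)$; rather, the paper produces a vertex $m$ of $\Delta$ such that $m-\alpha$ violates the inequality for some \emph{other} facet normal $n_j$ of $\Delta$ (this is where $\alpha\notin R(N,\Sigma_\Delta)$ is used). Second, for the linear-independence step your ``distinct leading monomials'' idea needs care because several roots can share the same facet $\Gamma_{-\alpha}$; the paper handles this by treating the roots attached to a fixed facet as the lattice points of an auxiliary polytope $P$ and peeling off its vertices one by one, and then separately rules out relations mixing roots attached to different facets via a sign argument with the facet normals.

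Your proposed alternative route is not available: Theorem \ref{theorem_number_of_moduli_intro} gives only $\ker(\kappa_{\mathbb{P},f})\cong\Lie\Aut(\mathbb{P})$, and there is no independent statement $\ker(\kappa_f)\cong\Lie\Aut(\mathbb{P}_\Delta)$ in the paper to invoke --- establishing that isomorphism is essentially equivalent to the theorem you are proving, so this would be circular.
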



\textit{Application:} Let $n=3$. Then $Y$ is smooth and thus 
\begin{align}
\Ext_{\mathcal{O}_{Y}}^1(\Omega_Y^1, \mathcal{O}_Y) \cong H^1(Y,T_Y).
\end{align}
There is a period map
\begin{align*}
\mathcal{P}_{B,f}: f \mapsto [H^{2,0}(Y_f)].
\end{align*}
This map is holomorphic and by a result of Griffiths the differential $d \mathcal{P}_{B,f}$ factors as
\[
\begin{tikzcd}
L(C(\Delta))/ \mathbb{C} \cdot f \arrow{r}{\kappa_{\mathbb{P},f}} \arrow[swap]{dr}{d \mathcal{P}_{B,f}} & H^1(Y,T_Y) \arrow{d}{\Phi_f} \\
&  \Hom(H^{2,0}(Y_f), H^{1,1}(Y_f))
\end{tikzcd}
\]
The infinitesimal Torelli theorem for $Y_f$ asks if $\Phi_f$ is injective. If we know $\ker(d \mathcal{P}_{B,f})$ we may check if
\begin{align}\label{equation_*}
\ker(d \mathcal{P}_{B,f}) \supseteq \ker(\kappa_f)
\end{align}
is an equality, in which case we get 
\begin{align*}
\ker(\Phi_{f \vert{Im \, \kappa_{\mathbb{P},f}}}) = 0
\end{align*}
giving a partial result towards the ITT. A calculation of $\ker(d \mathcal{P}_{B,f})$ should work with jacobian rings. In the dissertation (\cite{Gie23}) we worked this out for $n=3$. In fact these methods should also apply to higher dimensions.

One might be also interested in subfamilies of $\mathcal{X} \rightarrow B$: Given $B^0 \subseteq B$ with restriction $\kappa_f^0$ of $\kappa_f$ it is obvious that
\begin{align*}
x_1 \cdot \frac{\partial f}{\partial x_1},...,x_n \cdot \frac{\partial f}{\partial x_n} \in \ker(\kappa_f^0)
\end{align*}
but hard to say when $w_{-\alpha}(f) \in \ker(\kappa_f^0)$. We show that if $B^0$ \textit{equals} the set of vertices of $\Delta$ then under some additional condition on the polytope $\Delta$ none of the $w_{-\alpha}(f)$'s contribute to $\ker(\kappa_f)$ (see Lemma \ref{lemma_number_of_moduli_subfam_to_vertices}).

\leavevmode \\
What is known in the direction of our results: For $Y_f \subset \mathbb{P}^n$ a smooth degree $d$ hypersurface by (\cite[Ch.6]{Voi03})
\[ \ker(\kappa_{f}) \cong  \ker(\kappa_{\mathbb{P},f}) \cong J_{f,griff}^d,  \]
where $J_{f,griff}^d$ denotes the $d$-th homogeneous component of 
\[ J_{f,griff}:= \Big( \frac{\partial f}{\partial x_0},....,\frac{\partial f}{\partial x_n} \Big) \unlhd \mathbb{C}[x_0,...,x_n].  \]
In Example \ref{example_proj_hyp_Griffiths_jac_ideal_ker_Kod_Spenc} we show that if $d \geq n+1$ then our results specialize to this assertion. There are generalizations of this results to quasismooth hypersurfaces in weighted projective spaces (\cite[Thm.4.3.2]{Dol82}). In the dissertation (\cite{Koe91}) it is dealt with families of nondegenerate curves in toric surfaces. In (\cite{Mav03}) both the kernel and the cokernel of the Kodaira-Spencer map is dealt with for \textit{anticanonical hypersurfaces} and in (\cite{Oka87}) the Kodaira-Spencer map has been studied for $\Delta$ an $n$-simplex.
\\ \\
The author declares no conflicts of interest. Data sharing is not applicable to this article as no new data were created or analyzed in this study. The data that support the findings of this study are openly available at arXiv:2203.01092 [math.AG]

	\section{Notation and Background} \label{section_Not_and_background}
	Let $M$ and $N$ be dual lattices and $T = N \otimes_{\mathbb{Z}} \mathbb{C}^* \cong (\mathbb{C}^*)^n$ the $n$-dimensional torus. We fix an $n$-dimensional lattice polytope $\Delta \subset M_{\mathbb{R}}$ and denote the normal fan of a lattice (or more generally rational) polytope $F$ by $\Sigma_F$. We denote the projective toric variety defined via $\Sigma_F$ by $\mathbb{P}_F$. $\Sigma[1]$ denote the rays of the fan $\Sigma$ and $D_i$ (or $D_{\nu}$) the toric divisor to the ray $\nu_i$ (or $\nu$). \\ \\
	Throughout this article we assume $f$ to be a Laurent polynomial with Newton polytope $\Delta$, written
	\[ f = \sum\limits_{m \in M \cap \Delta} a_m x^m.  \]
	Set $\quad  Z_f := \{f=0\} \subset T$ and given another $n$-dimensional rational polytope $F$ let $Z_{F,f}$ or just $Z_{F}$ be the closure of $Z_f$ in $\mathbb{P}_{F}$. We repeat some results and notions introduced in (\cite{Bat22}):
	\begin{remark}  \label{remark_lin_equiv_class_hypersurface}
		\normalfont
		The linear equivalence class of $Z_{F,f}$ just depends on $\Delta$. More precisely let
		\[ \Min_{\Delta}(\nu) := \min\limits_{m \in \Delta} \langle m,\nu \rangle  \]
		for $\nu \in N \setminus \{0\}$. Then
		\[ Z_{F,f} \sim_{lin} - \sum\limits_{\nu \in \Sigma_F[1]} \Min_{\Delta}(\nu) \cdot D_{\nu} \]
		(\cite[Prop.7.1]{Bat22}). Therefore we usually omit $f$ from the notation.
	\end{remark}

	The nondegeneracy of $f$, written $f \in U_{reg}(\Delta)$, means that $Z_f$ is smooth and for every $k$-dimensional face $\Gamma$ of $\Delta$
	\begin{align} \label{formula_alt_condition_derivative_nondegeneracy}
		f_{\vert{\Gamma}}, \, \frac{\partial f_{\vert{\Gamma}}}{\partial x_1},...,  \frac{\partial f_{\vert{\Gamma}}}{\partial x_n},
	\end{align} 
	where $f_{\vert{\Gamma}} := \sum\limits_{m \in M \cap \Gamma} a_m x^m$, have no common zero in the torus orbit $(\mathbb{C}^*)^k \subset \mathbb{P}_{\Delta}$ corresponding to $\Gamma$. Write
	\begin{align} \label{Delta_intersection_of_hyperplanes_presentation}
	\Delta = \{ x \in M_{\mathbb{R}} \mid \langle x, \nu \rangle \geq \Min_{\Delta}(\nu) \quad \forall \, \nu \in N \setminus \{0\}  \}. 
	\end{align}
	we consider the \textit{Fine interior}
	\[ F(\Delta) := \{ x \in M_{\mathbb{R}} \mid \langle x, \nu \rangle \geq \Min_{\Delta}(\nu) + 1 \quad \forall \, \nu \in N \setminus \{0\}  \}.  \]
	In general $F(\Delta)$ is a rational polytope contained in $\Delta$ and contains all interior lattice points of $\Delta$. It is constructed by moving every hyperplane touching $\Delta$ one step into the interior of $\Delta$. Let
	\[ S_{F}(\Delta) := \{ \nu \in N \setminus \{0\} \mid \Min_{F(\Delta)}(\nu) = \Min_{\Delta}(\nu) + 1  \}  \]
	denote the \textit{support} of $F(\Delta)$ to $\Delta$ (those hyperplanes that touch $\Delta$ and moved by one touch $F(\Delta)$).
	
	\begin{figure}[H]
		\begin{center}
			
			\begin{tikzpicture}[scale=0.65]
				
				\fill (0,1) circle (2pt);
				\fill (0,2) circle (2pt);	
				\fill (0,3) circle (2pt);
				\fill (1,1) circle (2pt);
				\fill (1,2) circle (2pt);	
				\fill (1,3) circle (2pt);	
				\fill (2,1) circle (2pt);
				\fill (2,2) circle (2pt);	
				\fill (2,3) circle (2pt);	
				\fill (3,1) circle (2pt);
				\fill (3,2) circle (2pt);	
				\fill (3,3) circle (2pt);
				\fill (4,1) circle (2pt);
				\fill (4,2) circle (2pt);	
				\fill (4,3) circle (2pt);

				\draw (0,1) -- (0,3) -- (4,1) -- (0,1);
				
				\draw[dashed] (-1,3) -- (4,3);	
				\draw[arrows=->](-0.5,3)--(-0.5,2);
				\draw[arrows=->](1.7,3)--(1.7,2);
				\draw[arrows=->](3.5,3)--(3.5,2);
				\draw[dashed] (-1,2) -- (4,2);

				\begin{scope}[xshift = 7cm]
					
					\fill (0,1) circle (2pt);
					\fill (0,2) circle (2pt);	
					\fill (0,3) circle (2pt);
					\fill (0,4) circle (2pt);	
					\fill (1,1) circle (2pt);
					\fill (1,2) circle (2pt);	
					\fill (1,3) circle (2pt);
					\fill (1,4) circle (2pt);	
					\fill (2,1) circle (2pt);
					\fill (2,2) circle (2pt);	
					\fill (2,3) circle (2pt);
					\fill (2,4) circle (2pt);		
					
					\draw[->] (1,3) -- (0.025,1.05);
					\draw[->] (1,3) -- (1,3.95);
					\draw[->] (1,3) -- (1.95,3);
					
					\draw[dashed] (0,1) -- (2,3) -- (1,4) -- (0,1);
					\draw[->,dashed,thick] (1,3) -- (1,2.05);
					
				\end{scope}

			\end{tikzpicture}
			\caption{Figure: On the left a polygon $\Delta$, on the right the rays of $\Sigma_{\Delta}$. $F(\Delta)$ equals the interior lattice point of $\Delta$. The illustrations shows that $(0,-1) \in S_F(\Delta)$. In general the support vectors $S_F(\Delta)$ are contained in the convex span of the rays $\Sigma_{\Delta}[1]$ (see \cite[Prop.3.11]{Bat22})} \label{figure_support_of_Fine_interior}
		\end{center}
	\end{figure}
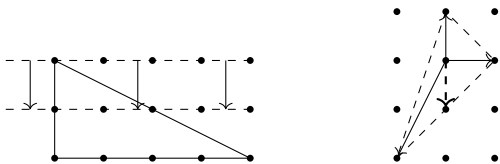


\begin{definition} \label{definition_can_closure} (\cite[Def.3.13]{Bat22}) \\
	Let $\Delta$ be a lattice polytope with presentation as in (\ref{Delta_intersection_of_hyperplanes_presentation}) and with $F(\Delta) \neq \emptyset$. The polytope
	\[ C(\Delta) := \{ x \in M_{\mathbb{R}}| \, \langle x,\nu \rangle \geq \Min_{\Delta}(\nu) \quad \forall \, \nu \in S_F(\Delta)  \}  \]
	is called the \textit{canonical closure} of $\Delta$.
\end{definition}

\begin{remark} \label{remark_properties_of_CDelta_FDelta_Support}
	\normalfont
	It is clear from the definition that $C(\Delta)$ is a rational polytope and contains $\Delta$. The operators of taking the Fine interior $F$, the canonical closure $C$ and the support $S_F$ could be defined analogously for rational polytopes. 
\end{remark}

\begin{proof}
	The first formula follows from the exact sequence (\ref{normal_sheaf_defining_exact_sequence}), the vanishing $H^1(\mathbb{P}, \mathcal{O}_{\mathbb{P}}) = 0$ due to Demazure and formula (\ref{formula_polytope_assoc_to_hyp_Y_CDelta}). The second follows from Remark \ref{remark_normal_sheaf_2}.
\end{proof}

	\begin{theorem} (\cite[Thm.8.2]{Bat22}) \\
		Assume $F(\Delta) \neq \emptyset$. Then there is a complete simplicial fan $\Sigma$ with $\Sigma[1] = S_F(\Delta)$ and with associated toric variety $\mathbb{P}$ such that with $Y=Y_f$ the closure of $Z_f$ in $\mathbb{P}$
		\begin{itemize}
			\item $\mathbb{P}$ has $\mathbb{Q}$-factorial terminal singularities 
			\item the adjoint divisor $K_{\mathbb{P}} + Y$ is nef.
		\end{itemize}
	\end{theorem}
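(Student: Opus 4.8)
The statement is \cite[Thm.~8.2]{Bat22}, and I would argue as follows. Since a simplicial toric variety is automatically $\mathbb{Q}$-factorial, the whole problem reduces to producing a complete simplicial fan $\Sigma$ whose ray set is \emph{exactly} $S_F(\Delta)$ and which refines the normal fan $\Sigma_{F(\Delta)}$ of the Fine interior. From the refinement property both remaining assertions follow: nefness of $K_{\mathbb{P}}+Y$ from the standard polytope/nef dictionary, and terminality of $\mathbb{P}$ from a short estimate exploiting the superadditivity of $\nu\mapsto\Min_{\Delta}(\nu)$. The hard part will be constructing $\Sigma$ with all four properties (complete, simplicial, rays precisely $S_F(\Delta)$, refining $\Sigma_{F(\Delta)}$) at once.

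\emph{Construction of $\Sigma$.} First, $S_F(\Delta)$ is finite: by \cite[Prop.~3.11]{Bat22} it consists of lattice points in the convex hull of the rays $\Sigma_{\Delta}[1]$. Next, these vectors positively span $N_{\mathbb{R}}$, since the polyhedron $F(\Delta)=\bigcap_{\nu\in S_F(\Delta)}\{x:\langle x,\nu\rangle\ge\Min_{\Delta}(\nu)+1\}$ is nonempty by hypothesis and bounded because $F(\Delta)\subseteq\Delta$, and a nonempty bounded polyhedron with these facet directions forces the $\nu$'s to positively span. The ray set $\Sigma_{F(\Delta)}[1]$ is a subset of $S_F(\Delta)$ (the inner facet normals of $F(\Delta)$). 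I would then take $\Sigma_{F(\Delta)}$ and, for each of the remaining $\nu\in S_F(\Delta)$ in turn, perform a star subdivision at $\nu$; each such operation is a refinement that adjoins the single ray $\nu$. Finally, star-subdivide at every ray to make the fan simplicial, which adds no further rays. The resulting $\Sigma$ is complete and simplicial, has $\Sigma[1]=S_F(\Delta)$, and (refinement being transitive) still refines $\Sigma_{F(\Delta)}$; keeping all subdivisions coherent makes $\mathbb{P}$ projective. The case $\dim F(\Delta)<n$ is handled the same way, with ``$\Sigma$ refines $\Sigma_{F(\Delta)}$'' read as ``$\nu\mapsto\Min_{F(\Delta)}(\nu)$ is linear on each cone of $\Sigma$''.

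\emph{Nefness.} By Remark~\ref{remark_lin_equiv_class_hypersurface} and $K_{\mathbb{P}}\sim-\sum_{\nu\in\Sigma[1]}D_{\nu}$ we get $K_{\mathbb{P}}+Y\sim-\sum_{\nu\in S_F(\Delta)}(\Min_{\Delta}(\nu)+1)D_{\nu}$, whose polytope is $\{x:\langle x,\nu\rangle\ge\Min_{\Delta}(\nu)+1,\ \nu\in S_F(\Delta)\}=F(\Delta)$, the inequalities for $\nu\notin S_F(\Delta)$ being redundant by the definition of the support. A torus-invariant divisor with polytope $P$ is nef iff $\Sigma$ refines $\Sigma_{P}$, so $K_{\mathbb{P}}+Y$ is nef by construction.

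\emph{Terminality.} Fix a maximal cone $\sigma=\Cone(v_1,\dots,v_n)$ with $v_i\in S_F(\Delta)$, let $m_{\sigma}\in M_{\mathbb{Q}}$ satisfy $\langle m_{\sigma},v_i\rangle=1$, and let $m^{\sigma}\in M_{\mathbb{Q}}$ represent $\nu\mapsto\Min_{F(\Delta)}(\nu)$ on $\sigma$ (linear there since $\Sigma$ refines $\Sigma_{F(\Delta)}$), so $\langle m^{\sigma},v\rangle=\Min_{F(\Delta)}(v)$ for $v\in\sigma$ and $\langle m^{\sigma},v_i\rangle=\Min_{\Delta}(v_i)+1$. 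Let $w\in N\cap\sigma$ be nonzero with $\langle m_{\sigma},w\rangle\le1$; it suffices to treat $w$ primitive. Writing $w=\sum_i\lambda_iv_i$ with $\lambda_i\ge0$ and $\sum_i\lambda_i=\langle m_{\sigma},w\rangle\le1$, and using superadditivity and positive homogeneity of $\nu\mapsto\Min_{\Delta}(\nu)$ to get $\Min_{\Delta}(w)\ge\sum_i\lambda_i\Min_{\Delta}(v_i)$, we obtain
\begin{align*}
\Min_{\Delta}(w)+1\ \le\ \Min_{F(\Delta)}(w)\ =\ \langle m^{\sigma},w\rangle\ =\ \sum_i\lambda_i\bigl(\Min_{\Delta}(v_i)+1\bigr)\ \le\ \Min_{\Delta}(w)+\sum_i\lambda_i\ \le\ \Min_{\Delta}(w)+1.
\end{align*}
Hence equality holds throughout: $\sum_i\lambda_i=1$ and $\Min_{F(\Delta)}(w)=\Min_{\Delta}(w)+1$, i.e. $w\in S_F(\Delta)=\Sigma[1]$; a ray of $\Sigma$ contained in $\sigma$ is a face of $\sigma$, so $w=v_i$ for some $i$. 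Thus $v_1,\dots,v_n$ are the only nonzero lattice points of $\sigma$ with $\langle m_{\sigma},\cdot\rangle\le1$, which is precisely the toric terminality criterion, so $\mathbb{P}$ has terminal singularities. The only genuinely delicate point in the whole argument is ensuring during the construction that the simplicializing subdivisions introduce no ray outside $S_F(\Delta)$ and that the degenerate case $\dim F(\Delta)<n$ is covered; everything else is the combinatorics displayed above.
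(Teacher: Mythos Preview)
The paper does not prove this theorem; it is quoted verbatim from \cite[Thm.~8.2]{Bat22} and used as a black box. So there is no ``paper's own proof'' to compare against, and your proposal is effectively an attempt to reproduce Batyrev's argument.

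Your nefness and terminality arguments are correct and cleanly written. The terminality estimate in particular is exactly the right mechanism: squeezing $\Min_{F(\Delta)}(w)$ between $\Min_\Delta(w)+1$ and $\sum_i\lambda_i(\Min_\Delta(v_i)+1)$ via superadditivity of $\Min_\Delta$ and linearity of $\Min_{F(\Delta)}$ on $\sigma$ forces $w\in S_F(\Delta)$, hence $w$ is a ray of $\sigma$.

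One genuine imprecision: ``star-subdivide at every ray to make the fan simplicial'' is not a valid operation---star subdivision at a ray already in the fan does nothing. What you want is the standard fact that every (complete) fan admits a simplicial refinement with the \emph{same} set of rays, obtained by triangulating each non-simplicial cone using only its extremal rays (see e.g.\ \cite[Prop.~11.1.7]{CLS11}). You clearly know this is the delicate step, since you flag it at the end, but the sentence as written is wrong and should be replaced. Projectivity of the resulting $\mathbb{P}$ also needs a word: one must choose the triangulations coherently (a regular/pulling triangulation), which is again standard but not automatic.

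The case $\dim F(\Delta)<n$ is not quite ``handled the same way'': the normal fan $\Sigma_{F(\Delta)}$ then has a lineality space and is not a fan of pointed cones, so one cannot literally start from it and star-subdivide. Your reformulation---require only that $\Min_{F(\Delta)}$ be linear on each cone of $\Sigma$---is the correct fix, and once $\Sigma$ is built directly from $S_F(\Delta)$ (which still positively spans $N_\mathbb{R}$ since $C(\Delta)$ is bounded with these facet normals) the nefness and terminality arguments go through unchanged. But the construction paragraph should be rewritten to reflect this rather than treating it as an afterthought.
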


	\begin{corollar} \label{corollary_min_model_Z_f} (\cite[Thm.8.2]{Bat22}) \\
		$Y \subset \mathbb{P}$ has at most terminal singularities and $K_{Y}$ is nef. We say that $Y=Y_f$ is a minimal model of $Z_f$.
	\end{corollar}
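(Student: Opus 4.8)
The plan is to deduce the Corollary from the preceding Theorem by combining adjunction with a discrepancy computation on a toric resolution; both assertions of the Theorem --- that $\mathbb{P}$ is $\mathbb{Q}$-factorial terminal and that $K_{\mathbb{P}}+Y$ is nef --- get used, together with the defining property $\Sigma[1]=S_F(\Delta)$ and the nondegeneracy of $f$.

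\textbf{Nefness of $K_Y$.} First I would record that $Y$ is a normal prime divisor: it is irreducible because $Z_f$ is (nondegeneracy makes $Z_f$ smooth and connected), and it is normal because, being a hypersurface, it is $S_2$ and regular in codimension one --- a $\mathbb{Q}$-factorial terminal toric variety has singular locus a union of orbit closures of codimension $\geq 3$, so $Y$ is smooth off a subset of codimension $\geq 2$ by nondegeneracy. Since $\mathbb{P}$ is $\mathbb{Q}$-factorial, $Y$ is $\mathbb{Q}$-Cartier, and as $\mathbb{P}$ is smooth in codimension one along $Y$ the adjunction formula $K_Y\sim_{\mathbb{Q}}(K_{\mathbb{P}}+Y)\vert_Y$ holds with no correction term. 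The restriction of the nef $\mathbb{Q}$-divisor $K_{\mathbb{P}}+Y$ to the closed subvariety $Y$ is again nef, so $K_Y$ is nef.

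\textbf{Terminality of $Y$.} Choose a smooth toric resolution $\pi\colon\widetilde{\mathbb{P}}\to\mathbb{P}$ whose fan $\widetilde\Sigma$ refines both $\Sigma$ and the normal fan $\Sigma_\Delta$, and let $\rho\colon\widetilde Y\to Y$ be the restriction of $\pi$ to the strict transform $\widetilde Y$ of $Y$. Nondegeneracy of $f$ makes $\widetilde Y$ smooth and transverse to every exceptional prime divisor $\widetilde D_\mu$, $\mu\in\widetilde\Sigma[1]\setminus\Sigma[1]$, so $\rho$ is a resolution of $Y$ and each of its exceptional primes is a component of some $F_\mu:=\widetilde D_\mu\cap\widetilde Y$. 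Regarding $f$ as a rational function, nondegeneracy also gives $\ord_{D_\nu}(f)=\Min_\Delta(\nu)$ along every toric boundary divisor (the relevant leading face polynomial being a nonzero Laurent polynomial), whence $\operatorname{div}_{\mathbb{P}}(f)=Y+\sum_{\nu\in\Sigma[1]}\Min_\Delta(\nu)D_\nu$ and the analogous identity on $\widetilde{\mathbb{P}}$. Subtracting pullbacks and comparing the adjunction formulas upstairs and downstairs, a short computation gives, for a new ray $\mu$ lying in a cone $\sigma=\Cone(\nu_1,\dots,\nu_k)\in\Sigma$ and written $\mu=\sum_ic_i\nu_i$,
\[ a(F_\mu;Y)\;=\;\sum_ic_i\bigl(1+\Min_\Delta(\nu_i)\bigr)\;-\;1\;-\;\Min_\Delta(\mu). \]

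\textbf{The combinatorial heart.} It remains to show this discrepancy is positive. Because $\Sigma[1]=S_F(\Delta)$, every ray satisfies $1+\Min_\Delta(\nu_i)=\Min_{F(\Delta)}(\nu_i)$, so $K_{\mathbb{P}}+Y=-\sum_{\nu\in\Sigma[1]}\Min_{F(\Delta)}(\nu)D_\nu$ has associated polytope $F(\Delta)$; nefness of $K_{\mathbb{P}}+Y$ then forces its support function to be upper convex, i.e.\ to coincide with $\Min_{F(\Delta)}$ on all of $N_{\mathbb{R}}$, so $\sum_ic_i(1+\Min_\Delta(\nu_i))=\Min_{F(\Delta)}(\mu)$ and
\[ a(F_\mu;Y)\;=\;\Min_{F(\Delta)}(\mu)-\bigl(\Min_\Delta(\mu)+1\bigr)\;>\;0, \]
the strict inequality being precisely the statement that $\mu\notin S_F(\Delta)$ (one always has $\Min_{F(\Delta)}\geq\Min_\Delta+1$, with equality exactly on $S_F(\Delta)$). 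Since further blow-ups inside the smooth $\widetilde Y$ can only raise discrepancies, $Y$ has terminal singularities. The step I expect to need the most care is not this final positivity --- which is immediate once the pieces are assembled and is essentially \cite[Thm.8.2]{Bat22}, so one could simply cite it --- but the bookkeeping in the middle: checking that $\widetilde Y$ is smooth and meets every $\widetilde D_\mu$ transversally, that the $\rho$-exceptional divisors are accounted for among the $F_\mu$, and that the order and discrepancy formulas along the $\widetilde D_\mu$ are as claimed.
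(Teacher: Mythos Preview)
Your argument is correct. Note, however, that the paper does not actually prove this corollary: it is stated with a bare citation to \cite[Thm.8.2]{Bat22}, the same reference as the preceding theorem, and no further justification is given. What you have done is reconstruct the relevant part of Batyrev's argument --- adjunction $K_Y\sim_{\mathbb{Q}}(K_{\mathbb{P}}+Y)|_Y$ (with vanishing different, since terminal implies smooth in codimension~$2$) for the nefness of $K_Y$, and a toric discrepancy computation for terminality culminating in the identity $a(F_\mu;Y)=\Min_{F(\Delta)}(\mu)-\Min_\Delta(\mu)-1>0$ for primitive $\mu\notin S_F(\Delta)=\Sigma[1]$. The bookkeeping you flag at the end (smoothness of $\widetilde Y$ and its transversality with every $\widetilde D_\mu$, which follows from \cite[Prop.3.2.1]{Bat94} once $\widetilde\Sigma$ refines $\Sigma_\Delta$; and the fact that $S_F(\Delta)$ consists only of primitive vectors, so a new primitive ray genuinely lies outside it) is indeed where the care is needed, and it goes through. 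So there is no gap, and nothing to compare against in the paper itself beyond the citation.
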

	

\begin{proposition} \label{proposition_hyp_nef_and_big} (\cite[Prop.7.4]{Bat22}) \\
	$Y \subset \mathbb{P}$ defines a nef and big $\mathbb{Q}$-Cartier divisor.
\end{proposition}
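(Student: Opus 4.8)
The plan is to exhibit the class of $Y$ as a toric $\mathbb{Q}$-Cartier divisor and then to read off nefness and bigness from its support function. The $\mathbb{Q}$-Cartier clause is immediate: $\mathbb{P}$ is simplicial, hence $\mathbb{Q}$-factorial, so \emph{every} Weil divisor — in particular $Y$ — is $\mathbb{Q}$-Cartier. By Remark \ref{remark_lin_equiv_class_hypersurface} one has $Y \sim_{lin} -\sum_{\nu \in \Sigma[1]} \Min_{\Delta}(\nu)\, D_{\nu}$, so $[Y]$ is represented by the toric divisor whose (rational, piecewise-linear) support function $\psi_{Y}$ on $\Sigma$ is the linear interpolation of the ray values $\psi_{Y}(\nu) = \Min_{\Delta}(\nu)$, $\nu \in \Sigma[1] = S_{F}(\Delta)$, and whose polytope of global sections is $C(\Delta)$ by formula (\ref{formula_polytope_assoc_to_hyp_Y_CDelta}). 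On a complete toric variety such a divisor is nef if and only if $\psi_{Y}$ is concave on $N_{\mathbb{R}}$, equivalently if and only if the fan $\Sigma$ refines the normal fan $\Sigma_{C(\Delta)}$ of $C(\Delta)$; so I would reduce the whole statement to this refinement property.

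Granting the refinement, the rest is formal. One obtains a proper birational toric morphism $\pi\colon \mathbb{P} \to \mathbb{P}_{C(\Delta)}$, and on the target the toric divisor $A$ with polytope $C(\Delta)$ is ample. I would then check that $\pi^{*}A$ and $Y$ define the same class: both are piecewise linear on $\Sigma$, and on every ray $\nu \in \Sigma[1] = S_{F}(\Delta)$ the pullback support function takes the value $\Min_{C(\Delta)}(\nu) = \Min_{\Delta}(\nu)$ (the supporting hyperplane of $C(\Delta)$ in direction $\nu$ already touches $\Delta$, since $\Delta \subseteq C(\Delta)$, cf. Definition \ref{definition_can_closure}), so the two support functions agree on rays and hence everywhere. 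Thus $Y = \pi^{*}A$ is the pullback of an ample divisor along a proper birational morphism, which is nef; and it is big because $\pi$ is birational and $A$ is big — equivalently $(Y^{n}) = n!\,\Vol(C(\Delta)) > 0$, since $C(\Delta)$ contains the $n$-dimensional polytope $\Delta$ (Remark \ref{remark_properties_of_CDelta_FDelta_Support}).

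It remains to see why $\Sigma$ refines $\Sigma_{C(\Delta)}$. Here I would invoke the construction underlying Corollary \ref{corollary_min_model_Z_f} (\cite[Thm.8.2]{Bat22}): the fan $\Sigma$ is built as a simplicial refinement of $\Sigma_{C(\Delta)}$ chosen so that $\mathbb{P}$ becomes terminal and $K_{\mathbb{P}}+Y$ becomes nef. Working only from the properties quoted in the excerpt, the same conclusion can be extracted as follows: "$K_{\mathbb{P}}+Y$ nef" says, by the dictionary above applied to $K_{\mathbb{P}}+Y \sim_{lin} -\sum_{\nu}(\Min_{\Delta}(\nu)+1)D_{\nu}$, that $\Sigma$ refines the normal fan of the polytope $F(\Delta) = \{\,m : \langle m,\nu\rangle \geq \Min_{\Delta}(\nu)+1\ \forall\,\nu \in S_{F}(\Delta)\,\}$; and one combines this with the identity $\Sigma_{F(\Delta)} = \Sigma_{C(\Delta)}$, valid because $F(\Delta)$ is obtained from $C(\Delta)$ by translating each of its facets inward by lattice distance one, a move under which — by the very definition/minimality of the support $S_{F}(\Delta)$, which records exactly the facets that persist — no facet degenerates, so the normal fan is unchanged.

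The main obstacle is precisely this last, purely combinatorial, point: the equality $\Sigma_{F(\Delta)} = \Sigma_{C(\Delta)}$ of normal fans (and, at the level of Batyrev's construction, the fact that the simplicial fan $\Sigma$ one is forced to choose genuinely refines $\Sigma_{C(\Delta)}$ rather than merely sharing its rays). Once that is in hand, concavity of $\psi_{Y}$, $\mathbb{Q}$-Cartierness, bigness, and the passage from "pullback of an ample divisor" to "nef and big" are all routine; the content of the proposition lives entirely in the geometry of the Fine interior and the canonical closure.
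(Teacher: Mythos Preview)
The paper does not give its own proof of this proposition; it simply cites \cite[Prop.7.4]{Bat22}. Your main line of argument --- $\mathbb{P}$ is simplicial hence $\mathbb{Q}$-factorial, $\Min_{C(\Delta)}(\nu)=\Min_{\Delta}(\nu)$ for $\nu\in\Sigma[1]=S_F(\Delta)$, and once $\Sigma$ refines $\Sigma_{C(\Delta)}$ the class of $Y$ is the pullback of the ample polarization on $\mathbb{P}_{C(\Delta)}$, giving nef and big --- is correct and is essentially how Batyrev proves it. The refinement $\Sigma\succeq\Sigma_{C(\Delta)}$ is indeed the crux; the paper uses it as a known fact elsewhere (see the proof of Corollary~\ref{proposition_criterion_Y_Cartier_divisor} and the inclusions~(\ref{inclusions_rays_support_vectors_convex_span})), and your first justification --- that in \cite[Thm.8.2]{Bat22} the fan $\Sigma$ is \emph{constructed} as a simplicial refinement of $\Sigma_{C(\Delta)}$ --- is the right one.

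Your alternative route, deducing the refinement from ``$K_{\mathbb{P}}+Y$ nef'' via the equality $\Sigma_{F(\Delta)}=\Sigma_{C(\Delta)}$, does not go through as stated. The Fine interior $F(\Delta)$ need not be full-dimensional: already in Figure~\ref{figure_support_of_Fine_interior} it is a single point, and in Example~\ref{example_elliptic_surface_not_can_closed} it is $1$-dimensional inside a $3$-dimensional $C(\Delta)$. In such cases $\Sigma_{F(\Delta)}$ has non-pointed maximal cones (for a point it is just $\{N_{\mathbb{R}}\}$), so $\Sigma_{F(\Delta)}\neq\Sigma_{C(\Delta)}$ and the implication ``$\Sigma$ refines $\Sigma_{F(\Delta)}$'' $\Rightarrow$ ``$\Sigma$ refines $\Sigma_{C(\Delta)}$'' fails. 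Your heuristic ``no facet degenerates because $S_F(\Delta)$ records the persisting facets'' conflates the supporting hyperplanes touching $F(\Delta)$ with genuine facets of $F(\Delta)$; every $\nu\in S_F(\Delta)$ gives a supporting hyperplane of $F(\Delta)$, but the intersection can drop dimension. You correctly flag this step as the main obstacle; the resolution is not to repair the normal-fan equality but to take the refinement directly from Batyrev's construction, as you do first.
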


\begin{lemma} \label{lemma_torus_orbit_intersection_hyp}
	Let $F$ be a torus orbit of $\mathbb{P}$. If 
	\[ \emptyset \neq F \cap Y \subsetneq F  \]
	then $Y$ intersects $F$ transversely in a subset of codimension $1$.
\end{lemma}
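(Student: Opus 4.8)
The plan is to identify $Y\cap F$ with the zero locus of a face truncation of $f$ on the torus $F$, and then to read off codimension and smoothness from the nondegeneracy of $f$ along the corresponding face of $\Delta$.

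First, write $F=O(\sigma)$ for the cone $\sigma\in\Sigma$ whose orbit is $F$, so $F\cong\Spec\,\mathbb{C}[\sigma^{\perp}\cap M]$ is a torus of dimension $n-\dim\sigma$. Let $\Gamma=\Gamma_{\sigma}$ be the face of $\Delta$ on which $m\mapsto\langle m,\nu\rangle$ attains $\Min_{\Delta}(\nu)$ for every $\nu\in\sigma$; this is an intersection of faces of $\Delta$, hence a face, so it has a vertex $m_{0}$, and since $m_{0}$ is also a vertex of $\Delta$ we have $a_{m_{0}}\neq0$. Because $\langle m-m_{0},\nu\rangle\geq0$ for all $m\in\Delta$ and $\nu\in\sigma$, the Laurent polynomial $g_{\sigma}:=x^{-m_{0}}f$ is regular on the chart $U_{\sigma}$; it has the constant term $a_{m_{0}}\neq0$, so is divisible by no nonconstant monomial of $\mathbb{C}[\sigma^{\vee}\cap M]$, whence $\{g_{\sigma}=0\}$ has no component inside the toric boundary of $U_{\sigma}$ and equals $Y\cap U_{\sigma}$, the closure of $Z_{f}$ there. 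Restricting $g_{\sigma}$ to $O(\sigma)$ kills every monomial $x^{m-m_{0}}$ with $m\notin\Gamma$, which gives
\begin{equation*}
	Y\cap F\;=\;\{\,\overline{f_{\vert\Gamma}}=0\,\}\subset F,\qquad \overline{f_{\vert\Gamma}}:=x^{-m_{0}}f_{\vert\Gamma}.
\end{equation*}

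With this in hand both assertions are immediate. The inclusion $Y\cap F\subsetneq F$ is automatic since $f_{\vert\Gamma}\neq0$; and $Y\cap F\neq\emptyset$ forces $\overline{f_{\vert\Gamma}}$ to be a non-unit, whose vanishing locus in the torus $F$ is nonempty of pure codimension one. For smoothness and transversality, observe that $\overline{f_{\vert\Gamma}}$ only involves the characters lying in the saturated sublattice $\mathrm{span}(\Gamma-m_{0})\cap M\subseteq\sigma^{\perp}\cap M$; splitting this off as a direct summand presents $F$ as a product $(\mathbb{C}^{*})^{s}\times O_{\Delta}(\Gamma)$ with $s=\dim F-\dim\Gamma$, where $O_{\Delta}(\Gamma)\cong(\mathbb{C}^{*})^{\dim\Gamma}$ is the torus orbit of $\mathbb{P}_{\Delta}$ attached to $\Gamma$ and $\overline{f_{\vert\Gamma}}$ depends only on the second factor. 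The nondegeneracy of $f$ along $\Gamma$, i.e.\ condition (\ref{formula_alt_condition_derivative_nondegeneracy}), says precisely that $\{\overline{f_{\vert\Gamma}}=0\}\subset O_{\Delta}(\Gamma)$ is smooth; since a product with a torus stays smooth, $Y\cap F$ is smooth of codimension one, and as its scheme structure is the reduced one induced by the single non-zero-divisor $\overline{f_{\vert\Gamma}}$ on the smooth variety $F$, the intersection of $Y$ with $F$ is transverse.

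The step requiring the most care is the identification $Y\cap F=\{\overline{f_{\vert\Gamma}}=0\}$: one must check that $\Gamma_{\sigma}$ really is a face of $\Delta$ (so that a vertex $m_{0}$ exists), that $g_{\sigma}$ acquires no spurious boundary component of $U_{\sigma}$ — precisely where the vertex choice with $a_{m_{0}}\neq0$ is used — and that the quotient torus $O(\sigma)$ of $\mathbb{P}=\mathbb{P}_{\Sigma}$ agrees with $O_{\Delta}(\Gamma)$ up to a torus factor, so that the nondegeneracy hypothesis, which is phrased through $\mathbb{P}_{\Delta}$, transfers. None of these points is hard, but they must be assembled carefully; alternatively one may simply quote the standard description, in the references already in use, of how the closure of a nondegenerate torus hypersurface meets the torus orbits. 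The nef-and-big $\mathbb{Q}$-Cartier property of Proposition \ref{proposition_hyp_nef_and_big} is not needed here, though it reconfirms that $Y$ cannot contain $\overline{F}$ once $Y\cap F\subsetneq F$.
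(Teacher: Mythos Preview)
Your proof is correct and takes a genuinely different route from the paper's. The paper argues indirectly: it passes to a smooth common refinement $\tilde{\mathbb{P}}$ of $\mathbb{P}$ and $\mathbb{P}_{\Delta}$, invokes \cite[Prop.~3.2.1]{Bat94} to get the statement for $\tilde{Y}\subset\tilde{\mathbb{P}}$, and then pushes down along the birational toric morphism $\tilde{\mathbb{P}}\to\mathbb{P}$. Your argument, by contrast, works directly on $\mathbb{P}$: you identify $Y\cap O(\sigma)$ with the zero locus of the face truncation $x^{-m_{0}}f_{\vert\Gamma_{\sigma}}$ and then read off codimension and smoothness from the nondegeneracy condition (\ref{formula_alt_condition_derivative_nondegeneracy}) along $\Gamma_{\sigma}$, after factoring $O(\sigma)\cong(\mathbb{C}^{*})^{s}\times O_{\Delta}(\Gamma_{\sigma})$. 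In effect you reprove the relevant part of \cite[Prop.~3.2.1]{Bat94} in the simplicial (not necessarily smooth) setting rather than quoting it.

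What each approach buys: the paper's proof is short because it outsources the work to the smooth case, but the descent step (``either $Y$ contains $F$ or intersects $F$ transversely'') is left rather implicit---one has to check that the orbit map $\tilde{F}\to F$ is a surjective torus homomorphism and that this transports the transversality. Your approach is longer but self-contained, and it yields the explicit description $Y\cap F\cong(\mathbb{C}^{*})^{s}\times\{\overline{f_{\vert\Gamma}}=0\}$, which is strictly more information. The points you flag in your last paragraph (that $\Gamma_{\sigma}$ is a nonempty face, that $g_{\sigma}$ has no boundary component, and that the saturated sublattice $\mathrm{span}(\Gamma-m_{0})\cap M$ splits off inside $\sigma^{\perp}\cap M$) are exactly the places where care is needed, and your treatment of them is sound; note in particular that $\sigma\subset\sigma_{\Gamma}$ (the normal cone of $\Gamma$ in $\Sigma_{\Delta}$), so $s=\dim F-\dim\Gamma\geq 0$ and the splitting makes sense.
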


\begin{proof}
	Choose a common refinement $\tilde{\Sigma}$ of $\Sigma$ and $\Sigma_{\Delta}$ in $N$ such that the toric variety $\tilde{\mathbb{P}}$ to $\tilde{\Sigma}$ is smooth. Let $\tilde{Y}$ denote the closure of $Z_f$ in $\tilde{\mathbb{P}}$. Then for $\tilde{F} \subset \tilde{\mathbb{P}}$ a torus orbit either $\tilde{Y}$ is disjoint from $\tilde{F}$ (if $\tilde{F}$ contracts to a torus fixed point on $\mathbb{P}_{\Delta}$) or intersects $\tilde{F}$ transversely in a subset of codimension $1$ (\cite[Prop.3.2.1]{Bat94}). \\
	There is a birational toric morphism $\tilde{\mathbb{P}} \rightarrow \mathbb{P}$ inducing a birational morphism $\tilde{Y} \rightarrow Y$. Assume that $\tilde{F}$ contracts onto $F$. If $\tilde{Y} \cap \tilde{F} \neq \emptyset$ then $Y \cap F \neq \emptyset$ and either $Y$ contains $F$ or intersects $F$ transversely in a subset of codimension $1$.
\end{proof}
	
\begin{remark} \label{remark_V_and_U_open_subsets_smoothness_codimension}
	\normalfont
	The singular loci both of $\mathbb{P}$ and $Y$ are of codimension $\geq 3$ since they are terminal. It will be important to choose smooth open subset $V \subset \mathbb{P}$ and $U \subset Y$ such that $U = V \cap Y$ and $V$ is toric. This works if we define $V$ as the union of all torus orbits of dimension $\geq n-2$.
\end{remark}
	
\begin{remark} \label{remark_complete_lin_system_min_model_can_closure}
	\normalfont
	To a toric divisor 
	\[ D = \sum\limits_{i=1}^r a_i D_i \subset \mathbb{P}  \]
	we associate a polytope
	\[ P_D := \{ x \in M_{\mathbb{R}} \mid \langle x, \nu_i \rangle \geq -a_i \quad \forall n_i \in \Sigma[1] \}  \]
	counting the global sections of $D$, that is $H^0(\mathbb{P}, \mathcal{O}_{\mathbb{P}}(D)) \cong L(P_D)$ (compare \cite[Prop.4.3.2]{CLS11}).	By the formula in Remark \ref{remark_lin_equiv_class_hypersurface} and by (\cite[Prop.4.3]{Bat22}) to $Y$ is associated the polytope $C(\Delta)$
	that is
	\begin{align} \label{formula_polytope_assoc_to_hyp_Y_CDelta}
		H^0(\mathbb{P}, \mathcal{O}_{\mathbb{P}}(Y)) \cong L(C(\Delta)). 
	\end{align}
	It follows in the same way that to $Y+K_{\mathbb{P}}$ is associated $F(\Delta)$.
\end{remark}

\begin{corollar} \label{proposition_criterion_Y_Cartier_divisor}\textbf{}
	$Y \subset \mathbb{P}$ is Cartier if and only if $C(\Delta)$ is a lattice polytope.
\end{corollar}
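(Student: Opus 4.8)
The plan is to reduce to the standard combinatorial criterion for a torus-invariant divisor on a complete toric variety to be Cartier, and then to read off this criterion directly from the polytope $C(\Delta)$.

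First I would replace $Y$ by the torus-invariant Weil divisor $D := -\sum_{\nu \in \Sigma[1]} \Min_{\Delta}(\nu)\, D_{\nu}$, which is linearly equivalent to $Y$ by Remark \ref{remark_lin_equiv_class_hypersurface} and has integral coefficients because $\Delta$ is a lattice polytope. Since being Cartier is preserved under linear equivalence (a principal divisor is Cartier), it suffices to decide when $D$ is Cartier. By the standard toric description (\cite[Thm.4.2.8]{CLS11}), $D$ is Cartier if and only if for every maximal cone $\sigma \in \Sigma$ there is an $m_{\sigma} \in M$ with $\langle m_{\sigma}, \nu \rangle = \Min_{\Delta}(\nu)$ for all rays $\nu \in \sigma(1)$. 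As $\Sigma$ is complete and simplicial, the rays of a maximal cone $\sigma$ form a $\mathbb{Q}$-basis of $N_{\mathbb{Q}}$, so such an $m_{\sigma}$ always exists and is unique in $M_{\mathbb{Q}}$; the content of the criterion is the integrality $m_{\sigma} \in M$ for all $\sigma$.

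Next I would identify the finite set $\{ m_{\sigma} \mid \sigma \in \Sigma \text{ maximal} \} \subset M_{\mathbb{Q}}$ with the vertex set of $C(\Delta)$. Here I use that $Y$, hence $D$, is nef and big by Proposition \ref{proposition_hyp_nef_and_big}: for a nef $\mathbb{Q}$-Cartier divisor on a complete toric variety the support function is convex with respect to $\Sigma$, so $\Sigma$ refines the normal fan of the associated polytope $P_D$ and one has $P_D = \bigcap_{\sigma}(m_{\sigma} + \sigma^{\vee})$. Because $\Sigma[1] = S_F(\Delta)$ and the coefficient of $D_{\nu}$ in $D$ is $-\Min_{\Delta}(\nu)$, the polytope $P_D$ is precisely $C(\Delta)$ by Definition \ref{definition_can_closure} — this is also formula (\ref{formula_polytope_assoc_to_hyp_Y_CDelta}). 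Each maximal cone of $\Sigma$ lies in a unique maximal cone of the normal fan of $C(\Delta)$, i.e. corresponds to a unique vertex of $C(\Delta)$, and that vertex is exactly $m_{\sigma}$; conversely every vertex of $C(\Delta)$ arises this way. Hence the distinct elements of $\{ m_{\sigma} \}$ are exactly the vertices of $C(\Delta)$.

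Combining the two steps, $D$ is Cartier if and only if every $m_{\sigma}$ lies in $M$, i.e. if and only if every vertex of $C(\Delta)$ is a lattice point, i.e. if and only if $C(\Delta)$ is a lattice polytope. The step requiring the most care is the identification of the local defining data $m_{\sigma}$ with the vertices of $C(\Delta)$: this is where nef-ness of $Y$ is essential, since for a non-nef divisor the $m_{\sigma}$ need not be vertices of $P_D$ and the equivalence can fail, while simpliciality of $\Sigma$ is what guarantees the $m_{\sigma}$ exist in $M_{\mathbb{Q}}$ at all.
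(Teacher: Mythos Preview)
Your argument is correct and follows essentially the same route as the paper: reduce to the torus-invariant divisor linearly equivalent to $Y$, use that $\Sigma$ refines the normal fan of $C(\Delta)$ (equivalently, nefness) so that the support function is linear on each maximal cone, and then observe that integrality of the local Cartier data is exactly the condition that $C(\Delta)$ be a lattice polytope. The only cosmetic difference is that the paper works directly with the support function $\Min_{C(\Delta)}$ (using $\Min_{\Delta}(\nu)=\Min_{C(\Delta)}(\nu)$ for $\nu\in\Sigma[1]$) rather than with the individual $m_{\sigma}$'s, which makes the equivalence $\Min_{C(\Delta)}(N)\subset\mathbb{Z}\Leftrightarrow C(\Delta)$ lattice immediate.
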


\begin{proof}
	By Remark \ref{remark_lin_equiv_class_hypersurface}
	\begin{align*}
		Y \sim_{lin}  - \sum\limits_{\nu \in \Sigma[1]} \Min_{\Delta}(\nu) D_{\nu} =  - \sum\limits_{\nu \in \Sigma[1]} \Min_{C(\Delta)}(\nu) D_{\nu}.
	\end{align*}
	The function $\Min_{C(\Delta)}: N_{\mathbb{R}} \rightarrow \mathbb{R}$ has the advantage that it is linear on the cones of $\Sigma$ since $\Sigma$ refines the normal fan of $C(\Delta)$. Thus $Y \subset \mathbb{P}$ is Cartier if and only if $\Min_{C(\Delta)}$ is a \textit{support function} for $Y$, that is 
	\[ \Min_{C(\Delta)}(N) \subset \mathbb{Z}.  \]
	Obviously this is the case if and only if $C(\Delta)$ is a lattice polytope.
\end{proof}

\begin{remark} 
	\normalfont
	In particular if $C(\Delta) = \Delta$ then $Y$ will be Cartier. In (\cite[Ex.3.1.5]{Bat22}) there is mentioned an example of a $5$-dimensional lattice polytope $\Delta$ for which $C(\Delta)$ is not a lattice polytope.
\end{remark}

\section{Reflexive sheaves and MCM-sheaves} \label{section_refl_torsion_free_sheaves}

Throughout this section $X$ denotes an $n$-dimensional normal irreducible variety with $j: U \rightarrow X$ the inclusion of the smooth locus of $X$.

	\begin{definition}
		A coherent sheaf $\mathcal{F}$ on $X$ is called reflexive if the natural map $\mathcal{F} \rightarrow \mathcal{F}^{**}$ is an isomorphism, where $\mathcal{F}^{**}$ denotes the double dual of the sheaf $\mathcal{F}$. $\mathcal{F}^{**}$ is called the reflexive hull of $\mathcal{F}$.
		
	\end{definition}
	
	\begin{remark} \label{remark_coh_sheaf_reflexive} \,
		(\cite[Prop.8.0.1, Thm.8.0.4]{CLS11}) \\
		\normalfont
		Given an open subset $j:U \subset X$ with $\codim_X(X \setminus U) \geq 2$ a reflexive sheaf is uniquely determined by its restriction to $U$, that is
		\begin{align} \label{example_sheaf_reflexive}
			\mathcal{F} \cong j_*(\mathcal{F}_{\vert{U}}).
		\end{align}
		Furthermore if $\mathcal{F}$ is a coherent sheaf with $\mathcal{F}_{\vert{U}}$ locally free and $\codim_X(X \setminus U) \geq 2$ then $j_{*}(\mathcal{F}_{\vert{U}})$ is reflexive (\cite[Prop.2.12]{Sch08}). The dual of a coherent sheaf on a normal variety is always reflexive. In particular the reflexive hull of a coherent sheaf is reflexive.
	\end{remark}

The tensor product of two reflexive is defined by
\[ \mathcal{F} \otimes_r \mathcal{G}:= (\mathcal{F} \otimes \mathcal{G})^{**}. \]
Given two Weil divisors $D,D'$ on $X$ we have $\mathcal{O}(D+D') \cong \mathcal{O}(D) \otimes_r \mathcal{O}(D')$.

	\begin{definition}
		Let
		\begin{align*}
			& \Omega_X^p := j_* \Omega_{U}^p \quad 1 \leq p \leq n, \\
			& T_X := (\Omega_X^1)^*,
		\end{align*} 
		be the sheaf of differential $p$-forms and the tangent sheaf. The sheaf
		\[ N_{Y/\mathbb{P}} := \iota_* \mathcal{O}_U(Y_{\vert{U}})  \]
		is called the normal sheaf of $Y$ in $\mathbb{P}$.
	\end{definition}

	\begin{remark} \label{remark_Kaehler_diff_forms_Aut_group}
		\normalfont
		There is a different method for the construction of $T_{X}$: Let $\Omega_{X,\textrm{K\"ahl}}^p$ denote the sheaf of K\"ahler $p$-differentials on $X$ and 
		\[ T_{X,\textrm{K\"ahl}}:= (\Omega_{X,\textrm{K\"ahl}}^1)^{*}.  \]
		Then $\Omega_{X,\textrm{K\"ahl}}^1$ is coherent and thus by Remark \ref{remark_coh_sheaf_reflexive} its dual $T_{X,\textrm{K\"ahl}}$ is reflexive and coincides with $T_{X}$ on the smooth locus $U$ of $X$. Since both sheaves are reflexive
		\[ T_{X,\textrm{K\"ahl}} \cong T_{X}. \]

		By (\cite[Ch.\RNum{6}.1]{MuOd15})
		\begin{align} \label{formula_Kaehler_tangential_Lie_algebra}
		H^{0}(X, T_{X,\textrm{K\"ahl}}) \cong \Lie(\Aut(X)), 
		\end{align}
		where $\Lie(\Aut(X))$ denotes the Lie algebra of the automorphism group of $X$. 		
\end{remark}

In order to apply Serre duality we need sheaves with the following property
\begin{definition} (\cite[Def. before Thm.9.2.12]{CLS11}, \cite[Def.5.2]{Kol13}) \\
	Let $\mathcal{F}$ be a coherent sheaf on $X$. Then $\mathcal{F}$ is called maximal Cohen-Macaulay (for short:MCM) if for all $x \in X$ the stalk $\mathcal{F}_x$ is a Cohen-Macaulay module over $\mathcal{O}_{X,x}$ of dimension $n$.
\end{definition}

\begin{proposition} \label{proposition_CM_sheaves} \cite[Proof of Thm.9.2.10]{CLS11} \\
	Let $V$ be an $n$-dimensional complete toric variety to a simplicial fan and $D$ a $\mathbb{Q}$-Cartier divisor on $V$. Then the sheaves $\Omega_V^p$ for $p=0,...,n$ and $\mathcal{O}_V(D)$ are MCM.
\end{proposition}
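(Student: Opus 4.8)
The plan is to reduce the statement to a local, affine-patch computation, using the combinatorial structure of a simplicial fan together with the standard toric resolution machinery.

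First I would recall why the claim is local. Being MCM is a stalkwise condition: $\mathcal{F}$ is MCM if and only if $\mathcal{F}_x$ is a Cohen-Macaulay $\mathcal{O}_{V,x}$-module of dimension $n$ for every $x$. Since $V$ is covered by the affine toric varieties $U_\sigma = \Spec\,\mathbb{C}[\sigma^\vee \cap M]$ for the maximal cones $\sigma$ of the simplicial fan $\Sigma$, it suffices to check the property for the restrictions $\Omega_V^p|_{U_\sigma}$ and $\mathcal{O}_V(D)|_{U_\sigma}$. So the problem reduces to the affine simplicial case $V = U_\sigma$.

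Next I would use the fact that a simplicial cone $\sigma$ is the image of a smooth cone under a finite surjective toric morphism: there is a sublattice $N' \subseteq N$ of finite index such that $\sigma$ is smooth with respect to $N'$, giving a finite quotient presentation $U_\sigma = \mathbb{A}^n / G$ where $G$ is the finite abelian group $N/N'$. On the smooth cover $\widetilde{U}_\sigma = \mathbb{A}^n$ the sheaves $\Omega^p$ and $\mathcal{O}(\widetilde D)$ (the pullback of $D$, which is Cartier upstairs since $D$ is $\mathbb{Q}$-Cartier) are locally free, hence trivially MCM. The key point is that for a finite group quotient $\pi: \widetilde{U}_\sigma \to U_\sigma$ with $\widetilde{U}_\sigma$ Cohen-Macaulay, the pushforward $\pi_*\mathcal{G}$ of an MCM sheaf $\mathcal{G}$ is again MCM on $U_\sigma$: a finite module over $\mathbb{C}[\widetilde{U}_\sigma]$ that is CM of dimension $n$ is, viewed as a module over the subring $\mathbb{C}[U_\sigma]$, still CM of dimension $n$, because depth and dimension are preserved under finite ring extensions (the module-finite extension does not change Krull dimension, and a maximal regular sequence upstairs pulls back appropriately). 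Finally one identifies $\Omega_V^p|_{U_\sigma}$ as the reflexive pushforward $\big(\pi_*\Omega^p_{\widetilde{U}_\sigma}\big)^G$, the $G$-invariant part, which is a direct summand of an MCM module and hence MCM; similarly $\mathcal{O}_V(D)|_{U_\sigma}$ is the $G$-isotypic piece of $\pi_*\mathcal{O}(\widetilde D)$ for the appropriate character, again a summand of an MCM module. Summands of MCM modules over a local ring are MCM, so we are done. An alternative to the quotient argument, perhaps cleaner to write, is to invoke Hochster's theorem that toric rings (affine semigroup rings) are Cohen-Macaulay, so $\mathcal{O}_{U_\sigma}$ itself is CM of dimension $n$; then one shows directly that $\Omega_V^p$ restricted to $U_\sigma$ is MCM by exhibiting it as $j_*$ of a locally free sheaf off a codimension $\geq 2$ locus and checking depth via the local cohomology criterion, and that $\mathcal{O}_V(D)$, being a rank-one reflexive sheaf (a divisorial sheaf) on a CM toric variety, is a rank-one MCM module — this last fact is standard for class-group elements on normal CM toric varieties.

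The main obstacle I expect is making the depth/dimension bookkeeping rigorous for $\Omega_V^p$: unlike $\mathcal{O}_V(D)$, which is divisorial of rank one and handled by general normal-CM-variety facts, the sheaf of $p$-forms has higher rank and its defining property is as the reflexive pushforward $j_*\Omega^p_U$ from the smooth locus. One must verify that this reflexive hull has depth $n$ at every point, i.e. that no extra depth is lost at the singular (codimension $\geq 3$, by terminality, but the proposition only assumes simplicial) points. The quotient-presentation route circumvents this by realizing $\Omega_V^p|_{U_\sigma}$ concretely as a $G$-invariant summand of a free module, so I would lean on that; the citation \cite[Proof of Thm.9.2.10]{CLS11} presumably does exactly this, and the proof here should just be: reduce to affine, take the finite smooth cover, observe that $\Omega^p$ and $\mathcal{O}(\widetilde D)$ are free upstairs, and conclude that their ($G$-isotypic) pushforwards are MCM because finite extensions and direct summands preserve the MCM property over a Cohen-Macaulay local base.
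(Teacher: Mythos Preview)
The paper does not supply its own proof of this proposition; it is stated with a bare citation to \cite[Proof of Thm.~9.2.10]{CLS11}. Your proposal is essentially the argument given in that reference: reduce to an affine simplicial chart $U_\sigma$, present it as a finite abelian quotient $\mathbb{A}^n/G$, observe that $\Omega^p$ and $\mathcal{O}(\widetilde D)$ are free upstairs, and conclude since the MCM property is preserved under finite pushforward and passes to direct ($G$-isotypic) summands. So there is nothing to compare against in the paper itself, and your outline matches the cited source.

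One small point of phrasing to tighten: when you say ``the pullback of $D$, which is Cartier upstairs since $D$ is $\mathbb{Q}$-Cartier,'' the cleaner justification is simply that $\widetilde U_\sigma \cong \mathbb{A}^n$ is smooth, hence locally factorial, so every Weil divisor there is Cartier; the $\mathbb{Q}$-Cartier hypothesis on $D$ is in fact automatic on a simplicial toric variety and plays no separate role in the argument.
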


Let $\mathcal{F}$ be a MCM-sheaf on $X$. Then Serre duality (\cite[Thm.5.71, Prop.5.75]{KM98}) gives
\begin{align*}
	H^k(X, \mathcal{F}) \cong H^{n-k}(X, \Hom(\mathcal{F}, \mathcal{O}_X(K_X)))^*.
\end{align*}

For $Z \subset X$ a closed subset, $\mathcal{F}$ a coherent sheaf on $X$, we work with the \textit{local cohomology groups} $H_Z^k(X,\mathcal{F})$ on $X$. This allows us to extend exact sequences 
which exist on the smooth locus $U$ of $X$ to all of $X$. First we recall a vanishing Theorem:

\begin{theorem} \label{theorem_Grothendieck_van_local_coh} (\cite[Prop.3.3]{HaKo04}) \\
	Let $X$ be an $n$-dimensional algebraic variety and $Z \subset X$ a closed subset with $\codim_X(Z) \geq r$. Let $\mathcal{F}$ be a coherent sheaf on $X$ which is MCM. Then
	\begin{align*}
		H_Z^k(X, \mathcal{F}) = 0  \quad k=0,...,r-1.
	\end{align*}
\end{theorem}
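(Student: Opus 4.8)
The plan is to reduce the global vanishing to a local statement about depth and then exploit the Cohen--Macaulay hypothesis. First I would pass from the local cohomology groups $H^k_Z(X,\mathcal F)$ to the \emph{sheaves} of local cohomology: the left-exact functor $\underline{\Gamma}_Z$ of sections supported on $Z$ has derived functors $\underline{H}^q_Z(-)$, and composing with $\Gamma(X,-)$ yields a Grothendieck spectral sequence
\[ E_2^{p,q} = H^p(X,\underline{H}^q_Z(\mathcal F)) \;\Longrightarrow\; H^{p+q}_Z(X,\mathcal F). \]
So it is enough to show $\underline{H}^q_Z(\mathcal F)=0$ for every $q<r$: then $E_2^{p,q}=0$ in that range, and hence $H^k_Z(X,\mathcal F)=0$ for $k<r$. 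Since $X$ is Noetherian and $\mathcal F$ coherent, the formation of $\underline{H}^q_Z$ commutes with localization, so for $x\in X$ the stalk $\underline{H}^q_Z(\mathcal F)_x$ is the algebraic local cohomology module $H^q_{I_x}(\mathcal F_x)$, where $I_x\subseteq\mathcal O_{X,x}$ is the stalk of the ideal sheaf of $Z$. For $x\notin Z$ we have $I_x=\mathcal O_{X,x}$ and all these modules vanish, so only $x\in Z$ matters.

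Next I would invoke Grothendieck's description of grade via local cohomology: for a finitely generated module $M$ over a Noetherian local ring and an ideal $I$ with $IM\neq M$ one has $H^q_I(M)=0$ for all $q<\operatorname{depth}(I,M)$, where $\operatorname{depth}(I,M)$ is the length of a maximal $M$-regular sequence lying in $I$. It therefore remains to prove $\operatorname{depth}(I_x,\mathcal F_x)\geq r$ for every $x\in Z$. Here the MCM hypothesis enters: $\mathcal F_x$ is a maximal Cohen--Macaulay $\mathcal O_{X,x}$-module, so it is Cohen--Macaulay, nonzero, and $\dim\mathcal F_x=\dim\mathcal O_{X,x}$; and because $x\in Z$ we have $I_x\subseteq\mathfrak m_x$, whence $I_x\mathcal F_x\subsetneq\mathcal F_x$ by Nakayama. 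For a Cohen--Macaulay module one has the identity $\operatorname{depth}(I,M)=\dim M-\dim(M/IM)$; applying it to $M=\mathcal F_x$ and $I=I_x$, and using that $\mathcal F_x/I_x\mathcal F_x$ is supported on $\operatorname{Spec}\mathcal O_{Z,x}$, we obtain
\[ \operatorname{depth}(I_x,\mathcal F_x)=\dim\mathcal O_{X,x}-\dim(\mathcal F_x/I_x\mathcal F_x)\;\geq\;\dim\mathcal O_{X,x}-\dim\mathcal O_{Z,x}\;\geq\; r, \]
the last step because $X$ is a variety (so $\mathcal O_{X,x}$ is catenary and equidimensional) and every irreducible component of $Z$ has codimension at least $r$ in $X$. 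This is exactly the estimate needed, and the theorem follows.

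The hard part is the bookkeeping at the interface between sheaf theory and commutative algebra rather than any single deep input: identifying the stalk of $\underline{H}^q_Z(\mathcal F)$ with the ordinary local cohomology module $H^q_{I_x}(\mathcal F_x)$ (standard for Noetherian schemes and coherent sheaves), correctly citing the grade/local-cohomology dictionary together with the Cohen--Macaulay formula $\operatorname{depth}(I,M)=\dim M-\dim(M/IM)$, and pinning down the codimension inequality $\dim\mathcal O_{X,x}-\dim\mathcal O_{Z,x}\geq r$ on a possibly singular $X$ via equidimensionality and catenarity of varieties. All of these are routine, which is why the statement may simply be quoted from \cite[Prop.3.3]{HaKo04}.
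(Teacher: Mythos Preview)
The paper does not prove this theorem; it is quoted directly from \cite[Prop.3.3]{HaKo04} and used as a black box. Your argument is correct and is essentially the standard proof one finds in the literature: reduce via the Grothendieck spectral sequence to the vanishing of the local cohomology sheaves $\underline{H}^q_Z(\mathcal F)$, compute their stalks as algebraic local cohomology modules, and then invoke the grade characterization $H^q_I(M)=0$ for $q<\operatorname{depth}(I,M)$ together with the Cohen--Macaulay identity $\operatorname{depth}(I,M)=\dim M-\dim(M/IM)$. You also correctly isolate the only point requiring care, namely the inequality $\dim\mathcal O_{X,x}-\dim\mathcal O_{Z,x}\geq r$, and justify it via catenarity and equidimensionality of varieties. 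There is therefore nothing to compare: you have supplied a self-contained proof where the paper merely cites one.
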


	\begin{proposition} \label{proposition_exact_normal_sheaf_sequence_defining}
		The sequence of sheaves
		\begin{align} \label{normal_sheaf_defining_exact_sequence}
			0 \rightarrow \mathcal{O}_{\mathbb{P}} \rightarrow \mathcal{O}_{\mathbb{P}}(Y) \rightarrow N_{Y/\mathbb{P}} \rightarrow 0
		\end{align}
	is exact.
	\end{proposition}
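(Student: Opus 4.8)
The plan is to build the sequence from its well-understood restriction to the smooth locus and then extend across the singular loci using the local-cohomology vanishing of Theorem~\ref{theorem_Grothendieck_van_local_coh}. First I would work on the common smooth open set: let $V \subset \mathbb{P}$ and $U = V \cap Y$ be the smooth toric open subsets from Remark~\ref{remark_V_and_U_open_subsets_smoothness_codimension}, so that $\codim_{\mathbb{P}}(\mathbb{P} \setminus V) \geq 3$ and $\codim_Y(Y \setminus U) \geq 3$. By Lemma~\ref{lemma_torus_orbit_intersection_hyp}, $Y$ meets every torus orbit of dimension $\geq n-2$ either trivially, or along a codimension-one transverse subvariety, or it is contained in it; hence $Y_{\vert V}$ is an effective Cartier divisor on the smooth variety $V$, and $U = (Y_{\vert V})_{\mathrm{red}}$ is smooth. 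Thus on $V$ we have the classical exact sequence of a Cartier divisor
\begin{align*}
0 \rightarrow \mathcal{O}_V \rightarrow \mathcal{O}_V(Y_{\vert V}) \rightarrow \mathcal{O}_U(Y_{\vert U}) \rightarrow 0,
\end{align*}
where the last term is the honest normal bundle $N_{U/V}$, which by definition (restricted to $U$) is $N_{Y/\mathbb{P}\vert U}$.

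Next I would push this forward along the open immersion $j \colon V \hookrightarrow \mathbb{P}$. Applying $j_*$ gives the left-exact sequence
\begin{align*}
0 \rightarrow j_*\mathcal{O}_V \rightarrow j_*\mathcal{O}_V(Y_{\vert V}) \rightarrow j_* N_{Y/\mathbb{P}\vert U} \rightarrow R^1 j_* \mathcal{O}_V.
\end{align*}
Now I identify the terms. Since $\mathbb{P}$ is normal and $\codim \geq 2$, $j_*\mathcal{O}_V = \mathcal{O}_{\mathbb{P}}$; since $\mathcal{O}_{\mathbb{P}}(Y)$ is reflexive (it is $\mathcal{O}_{\mathbb{P}}(D)$ for the Weil divisor $Y$) and agrees with $\mathcal{O}_V(Y_{\vert V})$ on $V$, Remark~\ref{remark_coh_sheaf_reflexive} gives $j_*\mathcal{O}_V(Y_{\vert V}) = \mathcal{O}_{\mathbb{P}}(Y)$; and by the \emph{definition} of $N_{Y/\mathbb{P}}$ via $\iota_* \mathcal{O}_U(Y_{\vert U})$, together with $j_* N_{Y/\mathbb{P}\vert U}$ being the pushforward to $\mathbb{P}$ of the same sheaf, one gets $j_* N_{Y/\mathbb{P}\vert U} = N_{Y/\mathbb{P}}$ (the two pushforwards agree because $\iota \circ (j\vert_U) = j \circ \iota_V$ as maps $U \to \mathbb{P}$, where $\iota_V\colon U \to V$). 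So the sequence reads
\begin{align*}
0 \rightarrow \mathcal{O}_{\mathbb{P}} \rightarrow \mathcal{O}_{\mathbb{P}}(Y) \rightarrow N_{Y/\mathbb{P}} \rightarrow R^1 j_* \mathcal{O}_V,
\end{align*}
and it remains to kill the obstruction term and to prove surjectivity onto all of $N_{Y/\mathbb{P}}$.

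For this I would use the local cohomology of $Z := \mathbb{P} \setminus V$, which has codimension $\geq 3$. From the exact sequence $0 \to \mathcal{O}_{\mathbb{P}} \to \mathcal{O}_{\mathbb{P}}(Y) \to N_{Y/\mathbb{P}} \to 0$ restricted to $V$ and the local-to-global spectral sequence for $j_*$, the relevant comparison is governed by $H^k_Z(\mathbb{P}, \mathcal{O}_{\mathbb{P}}(Y))$ for $k = 1, 2$. By Proposition~\ref{proposition_CM_sheaves}, $\mathcal{O}_{\mathbb{P}}(Y)$ is MCM (it is $\mathcal{O}_{\mathbb{P}}(D)$ for a $\mathbb{Q}$-Cartier divisor $D \sim Y$ on the complete simplicial toric variety $\mathbb{P}$), so Theorem~\ref{theorem_Grothendieck_van_local_coh} with $r = 3$ gives $H^1_Z(\mathbb{P}, \mathcal{O}_{\mathbb{P}}(Y)) = H^2_Z(\mathbb{P}, \mathcal{O}_{\mathbb{P}}(Y)) = 0$; similarly $H^0_Z = H^1_Z = 0$ for $\mathcal{O}_{\mathbb{P}}$ itself. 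Feeding these vanishings into the long exact sequence of local cohomology associated to $0 \to \mathcal{O}_{\mathbb{P}} \to \mathcal{O}_{\mathbb{P}}(Y) \to \mathcal{Q} \to 0$, where $\mathcal{Q}$ is the quotient sheaf on $\mathbb{P}$ (whose restriction to $V$ is $N_{Y/\mathbb{P}\vert U}$, and whose reflexive hull along $Y$ is $N_{Y/\mathbb{P}}$), shows first that $\mathcal{Q}$ has no sections supported on $Z$ and that $H^1_Z(\mathbb{P},\mathcal{Q}) = 0$, hence $\mathcal{Q} \cong j_* \mathcal{Q}_{\vert V} = N_{Y/\mathbb{P}}$ and $R^1 j_* \mathcal{O}_V$ does not interfere; concretely, surjectivity of $\mathcal{O}_{\mathbb{P}}(Y) \to N_{Y/\mathbb{P}}$ follows because its cokernel would be a coherent sheaf vanishing on $V$, i.e.\ supported on $Z$, yet it injects into $R^1 j_*\mathcal{O}_V = 0$ (the latter vanishing again from $H^1_Z(\mathbb{P},\mathcal{O}_{\mathbb{P}}) = H^2_Z(\mathbb{P},\mathcal{O}_{\mathbb{P}}) = 0$ via the local cohomology sequence $H^1(V,\mathcal{O}_V) \to H^2_Z(\mathbb{P},\mathcal{O}_{\mathbb{P}})$, or directly: $R^1j_*\mathcal{O}_V$ has stalks computed by $H^2_Z$). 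Exactness on the left is automatic since $\mathcal{O}_{\mathbb{P}} \hookrightarrow \mathcal{O}_{\mathbb{P}}(Y)$ is injective as a map of torsion-free sheaves. The main obstacle is the bookkeeping of which sheaf on $\mathbb{P}$ the middle cokernel actually is and matching it to $N_{Y/\mathbb{P}} = \iota_*\mathcal{O}_U(Y_{\vert U})$; once the codimension-$\geq 3$ terminality from Remark~\ref{remark_V_and_U_open_subsets_smoothness_codimension} and the MCM property from Proposition~\ref{proposition_CM_sheaves} are in hand, the vanishing of all obstruction groups is immediate from Theorem~\ref{theorem_Grothendieck_van_local_coh}, and no genuine computation remains.
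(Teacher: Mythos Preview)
Your argument is correct and follows essentially the same route as the paper: push forward the Cartier-divisor sequence from the smooth toric open set $V$ (where $Y_{\vert V}$ is automatically Cartier since $V$ is smooth), identify the three terms via reflexivity and the definition of $N_{Y/\mathbb{P}}$, and kill the obstruction $R^1 j_*\mathcal{O}_V$ using the MCM property of $\mathcal{O}_{\mathbb{P}}$ together with $\codim Z \geq 3$ and Theorem~\ref{theorem_Grothendieck_van_local_coh}. The paper's write-up is slightly leaner---it computes $R^1\iota_*\mathcal{O}_V=0$ directly on affine opens via the local cohomology sequence, so your detour through $H^k_Z(\mathbb{P},\mathcal{O}_{\mathbb{P}}(Y))$ and the auxiliary quotient $\mathcal{Q}$ is not needed---but the substance is the same.
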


	\begin{proof}
		If $Y \subset \mathbb{P}$ is Cartier (see Corollary \ref{proposition_criterion_Y_Cartier_divisor} for a criterion) then there is nothing to show, but in general $Y$ just defines a $\mathbb{Q}$-Cartier divisor. \\ We may assume that $n = \dim \, \mathbb{P} \geq 3$. Let $\iota: V \rightarrow \mathbb{P}$ be the inclusion of the union of all torus orbits of dimension $\geq n-2$ and $U:= V \cap Y$. If we take the pushforward under $\iota_*$ of an ideal sheaf sequence for $U \subset V$ we get
		\[0  \rightarrow \mathcal{O}_{\mathbb{P}} \rightarrow \mathcal{O}_{\mathbb{P}}(Y) \rightarrow N_{Y/\mathbb{P}} \rightarrow R^1 \iota_*(\mathcal{O}_V).  \]
		 $R^1 \iota_*(\mathcal{O}_{V})$ is the sheaf associated to the presheaf
		\[ W \mapsto H^1(W \cap V,  \mathcal{O}_{W \cap V}). \]
		Assume that $W \subset \mathbb{P}$ is affine, $V \subset W$ an open subset such that $\codim_W(Z) \geq 3$ for $Z:= W \setminus V$. We have to show that $H^1(V, \mathcal{O}_{V}) = 0$. There is a local cohomology exact sequence (\cite[Cor.1.9]{Gro67})
		\[ ... \rightarrow H^1(W,\mathcal{O}_{W}) \rightarrow H^1(V, \mathcal{O}_{V}) \rightarrow H_Z^2(W, \mathcal{O}_{W}) \]
		Since $W$ is affine $H^1(W,\mathcal{O}_W) = 0$ by Serre's criterion and $H_Z^2(W, \mathcal{O}_{W}) = 0$ by Theorem \ref{theorem_Grothendieck_van_local_coh} since $\mathcal{O}_{\mathbb{P}}$ is MCM and $\codim_W(Z) \geq 3$. 
	\end{proof}

We will use this exact sequence and the vanishing $H^1(\mathbb{P}, \mathcal{O}_{\mathbb{P}}) = 0$ in Remark \ref{remark_complete_lin_system_min_model_can_closure} to compute $H^0(Y,N_{Y/\mathbb{P}})$ by identifying $Y \subset \mathbb{P}$ with a torus invariant divisor.

\begin{corollar} \label{lemma_normal_sheaf_Y_P_CM}
	The normal sheaf $N_{Y/\mathbb{P}}$ is MCM.
\end{corollar}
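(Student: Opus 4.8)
The plan is to deduce that $N_{Y/\mathbb{P}}$ is MCM from the exact sequence (\ref{normal_sheaf_defining_exact_sequence}) together with Proposition \ref{proposition_CM_sheaves}, using the standard depth/dimension bookkeeping for short exact sequences of coherent sheaves. First I would observe that in the sequence
\[ 0 \rightarrow \mathcal{O}_{\mathbb{P}} \rightarrow \mathcal{O}_{\mathbb{P}}(Y) \rightarrow N_{Y/\mathbb{P}} \rightarrow 0 \]
the first two sheaves are MCM: $\mathcal{O}_{\mathbb{P}} = \mathcal{O}_{\mathbb{P}}(0)$ and $\mathcal{O}_{\mathbb{P}}(Y)$ are MCM by Proposition \ref{proposition_CM_sheaves}, since $\mathbb{P}$ is a complete toric variety to a simplicial fan and $Y$ defines a $\mathbb{Q}$-Cartier divisor. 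So for each point $x \in \mathbb{P}$ we have a short exact sequence of $\mathcal{O}_{\mathbb{P},x}$-modules in which the left two terms are Cohen-Macaulay of dimension $n$.

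Next I would run the depth estimate along the sequence at the stalk level. Writing $R = \mathcal{O}_{\mathbb{P},x}$, the exact sequence $0 \to R \to \mathcal{O}_{\mathbb{P}}(Y)_x \to (N_{Y/\mathbb{P}})_x \to 0$ gives, via the long exact sequence for $\Ext_R^\bullet(k,-)$ (equivalently the depth lemma), the bound
\[ \operatorname{depth}\,(N_{Y/\mathbb{P}})_x \geq \min\bigl\{\operatorname{depth}\,\mathcal{O}_{\mathbb{P}}(Y)_x,\ \operatorname{depth}\,R - 1\bigr\}. \]
Here the subtraction of $1$ is the only loss, and it threatens to give $\operatorname{depth} \geq n-1$ rather than $n$. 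The way to recover the missing unit of depth is to note that $N_{Y/\mathbb{P}}$ is supported on $Y$, so as an $\mathcal{O}_{Y,x}$-module its dimension is at most $n-1$; hence it suffices to show $\operatorname{depth}_{\mathcal{O}_{Y,x}}(N_{Y/\mathbb{P}})_x \geq n-1$, and this does follow from the estimate above once we work over $\mathcal{O}_{Y,x}$ (the surjection $\mathcal{O}_{\mathbb{P}}(Y)_x \twoheadrightarrow (N_{Y/\mathbb{P}})_x$ and the fact that $\mathcal{O}_{\mathbb{P}}(Y)_x$ is a CM $R$-module give, after killing the ideal of $Y$, that the depth of $(N_{Y/\mathbb{P}})_x$ over $\mathcal{O}_{Y,x}$ is at least $n-1 = \dim \mathcal{O}_{Y,x}$ at each point of $Y$). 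Since $N_{Y/\mathbb{P}}$ is a sheaf on the $(n-1)$-dimensional variety $Y$ and its stalks are CM of full dimension $n-1$ there, and MCM on $\mathbb{P}$ is by definition a statement about stalks at points of the support, this is exactly the MCM conclusion we want. An alternative, cleaner route is to invoke Serre's criterion in the form: a coherent sheaf $\mathcal{F}$ on $X$ fitting into $0 \to \mathcal{G}_1 \to \mathcal{G}_2 \to \mathcal{F} \to 0$ with $\mathcal{G}_1, \mathcal{G}_2$ MCM and $\mathcal{G}_1$ a subsheaf with torsion-free quotient locally, is itself a sheaf whose pushforward from the smooth locus is reflexive and MCM — but the depth computation is the honest core.

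The main obstacle is precisely the one-step depth drop: a naive application of the depth lemma to the sequence over $\mathcal{O}_{\mathbb{P},x}$ only yields $\operatorname{depth}\,(N_{Y/\mathbb{P}})_x \geq n-1$, which is not MCM over $\mathbb{P}$ but is exactly the right bound once one remembers that $N_{Y/\mathbb{P}}$ lives on the hypersurface $Y$ — so the key conceptual point to get right is that ``MCM'' for $N_{Y/\mathbb{P}}$ must be interpreted relative to its scheme-theoretic support $Y$, of dimension $n-1$, and the depth $\geq n-1$ estimate then says the stalks are CM of the correct (full, relative to $Y$) dimension. I would make sure to state this carefully, since $Y$ is Cohen-Macaulay (being a $\mathbb{Q}$-Cartier divisor, locally a hypersurface section cut out by one equation up to finite quotient, in the CM variety $\mathbb{P}$), so the ambient-versus-support subtlety is harmless and the conclusion $N_{Y/\mathbb{P}}$ is MCM follows.
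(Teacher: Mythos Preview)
Your proposal is correct and follows essentially the same approach as the paper: both use the exact sequence (\ref{normal_sheaf_defining_exact_sequence}), the fact that $\mathcal{O}_{\mathbb{P}}$ and $\mathcal{O}_{\mathbb{P}}(Y)$ are MCM by Proposition \ref{proposition_CM_sheaves}, and the depth lemma for short exact sequences to conclude that $(N_{Y/\mathbb{P}})_y$ is Cohen--Macaulay of dimension $n-1$ over $\mathcal{O}_{Y,y}$. The only difference is packaging: the paper cites \cite[Cor.~2.62(3)]{Kol13} for the depth inequality in one line, whereas you unpack the depth-lemma computation by hand and explicitly address the passage from $\mathcal{O}_{\mathbb{P},x}$ to $\mathcal{O}_{Y,x}$.
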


\begin{proof}
	Consider the exact sequence (\ref{normal_sheaf_defining_exact_sequence}). For $y \in Y$ the $\mathcal{O}_{Y,y}$-module $(N_{Y/\mathbb{P}})_y$ has dimension $n-1$ and by (\cite[Cor.2.62(3)]{Kol13}) it is Cohen-Macaulay since both $\mathcal{O}_{\mathbb{P}}$ and $\mathcal{O}_{\mathbb{P}}(Y)$ are MCM sheaves.
\end{proof}

\begin{remark} \label{remark_normal_sheaf_2}
	\normalfont
	The normal sheaf $N_{Y_f/\mathcal{X}}:= (I_{Y_f} \otimes_{\mathcal{O}_{\mathcal{X}}} \mathcal{O}_{Y_f})^*$ is trivial of rank $l(\Delta)-1$, where $\mathcal{I}_{Y_f}$ denotes the ideal sheaf of $Y_f \subset \mathcal{X}$. 
\end{remark}

\begin{corollar} \label{remark_normal_sheaf}
	\begin{align*}
		& H^0(Y,N_{Y/\mathbb{P}}) \cong L(C(\Delta))/ \mathbb{C} \cdot f, \\
		& H^0(Y,N_{Y/\mathcal{X}}) \cong L(\Delta) / \mathbb{C} \cdot f. 
	\end{align*}
\end{corollar}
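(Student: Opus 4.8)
The plan is to derive both isomorphisms from the exact sequence (\ref{normal_sheaf_defining_exact_sequence}) together with the identification of $Y \subset \mathbb{P}$ with a torus invariant divisor. First I would take the long exact cohomology sequence of
\begin{align*}
	0 \rightarrow \mathcal{O}_{\mathbb{P}} \rightarrow \mathcal{O}_{\mathbb{P}}(Y) \rightarrow N_{Y/\mathbb{P}} \rightarrow 0,
\end{align*}
obtaining $0 \rightarrow H^0(\mathbb{P},\mathcal{O}_{\mathbb{P}}) \rightarrow H^0(\mathbb{P},\mathcal{O}_{\mathbb{P}}(Y)) \rightarrow H^0(Y,N_{Y/\mathbb{P}}) \rightarrow H^1(\mathbb{P},\mathcal{O}_{\mathbb{P}})$. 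The term $H^1(\mathbb{P},\mathcal{O}_{\mathbb{P}})$ vanishes by Demazure (as recalled in Remark \ref{remark_complete_lin_system_min_model_can_closure}), and $H^0(\mathbb{P},\mathcal{O}_{\mathbb{P}}) \cong \mathbb{C}$ since $\mathbb{P}$ is a complete variety, with the map $H^0(\mathbb{P},\mathcal{O}_{\mathbb{P}}) \rightarrow H^0(\mathbb{P},\mathcal{O}_{\mathbb{P}}(Y))$ sending $1$ to the section of $\mathcal{O}_{\mathbb{P}}(Y)$ cutting out $Y$, i.e. the section corresponding to $f$ under the identification $H^0(\mathbb{P}, \mathcal{O}_{\mathbb{P}}(Y)) \cong L(C(\Delta))$ of formula (\ref{formula_polytope_assoc_to_hyp_Y_CDelta}). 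Hence $H^0(Y,N_{Y/\mathbb{P}}) \cong L(C(\Delta))/\mathbb{C}\cdot f$, which is the first formula.

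For the second formula I would invoke Remark \ref{remark_normal_sheaf_2}, where $N_{Y_f/\mathcal{X}}$ is identified with the trivial sheaf of rank $l(\Delta)-1$ on $Y_f$, so that $H^0(Y, N_{Y/\mathcal{X}}) \cong \mathbb{C}^{l(\Delta)-1}$. On the other hand $L(\Delta)$ has dimension $l(\Delta)$ (the number of lattice points of $\Delta$, which equals the number of monomials of $f$), so $L(\Delta)/\mathbb{C}\cdot f$ also has dimension $l(\Delta)-1$. To make the isomorphism canonical rather than merely dimension-counting, I would identify $H^0(Y,N_{Y/\mathcal{X}})$ with $T_{B,f}$, the tangent space of $B$ at $f$: infinitesimal deformations of $Y_f$ inside $\mathcal{X}$ over $\Spec\,\mathbb{C}[\epsilon]/(\epsilon^2)$ correspond to tangent vectors $v$ at $f$ via base change, and $T_{B,f} \cong L(\Delta)/\mathbb{C}\cdot f$ since $B \subset \mathbb{P}(L(\Delta))$ is (a Zariski-open in) the projectivization of $L(\Delta)$, whose tangent space at $[f]$ is $L(\Delta)/\mathbb{C}\cdot f$. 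One should also note the consistency with the first formula via the inclusion $H^0(Y,N_{Y/\mathcal{X}}) \subset H^0(Y,N_{Y/\mathbb{P}})$ coming from $L(\Delta) \subset L(C(\Delta))$ and $\Delta \subset C(\Delta)$ (Remark \ref{remark_properties_of_CDelta_FDelta_Support}).

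The main obstacle is the second isomorphism: making precise, without circularity, that $N_{Y/\mathcal{X}}$ is trivial of the stated rank and that its global sections match $L(\Delta)/\mathbb{C}\cdot f$ as a subspace of $L(C(\Delta))/\mathbb{C}\cdot f$. This rests on the (commented-out) argument in Remark \ref{remark_normal_sheaf_2} that $Y_f$ is cut out in $\mathcal{X}$ by pullbacks of $l(\Delta)-1$ hyperplanes in $B$ which can be chosen to miss $\{f\}$, giving $N_{Y_f/\mathcal{X}} \cong \bigoplus_{i=1}^{l(\Delta)-1}\mathcal{O}_{Y_f} \cong T_{B,f}\otimes_{\mathbb{C}}\mathcal{O}_{Y_f}$; then $H^0(Y,N_{Y/\mathcal{X}}) \cong T_{B,f}\otimes H^0(Y,\mathcal{O}_Y) \cong T_{B,f}$, using $H^0(Y,\mathcal{O}_Y) \cong \mathbb{C}$ (which follows since $Y$ is projective and connected, e.g. from $H^0(\mathbb{P},\mathcal{O}_{\mathbb{P}}) \twoheadrightarrow H^0(Y,\mathcal{O}_Y)$ and the cohomology sequence above). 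Everything else is a direct bookkeeping of the exact sequences already established.
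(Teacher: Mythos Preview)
Your proposal is correct and follows essentially the same route as the paper: the first isomorphism comes from the long exact sequence of (\ref{normal_sheaf_defining_exact_sequence}) together with Demazure's vanishing $H^1(\mathbb{P},\mathcal{O}_{\mathbb{P}})=0$ and formula (\ref{formula_polytope_assoc_to_hyp_Y_CDelta}), and the second from Remark \ref{remark_normal_sheaf_2}. Your write-up is simply more explicit (identifying the image of $1$ with $f$, spelling out $N_{Y/\mathcal{X}}\cong T_{B,f}\otimes\mathcal{O}_Y$ and $H^0(Y,\mathcal{O}_Y)\cong\mathbb{C}$); the only quibble is that the Demazure vanishing is not actually stated in Remark \ref{remark_complete_lin_system_min_model_can_closure} but rather in the sentence following Proposition \ref{proposition_exact_normal_sheaf_sequence_defining}.
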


\section{Kodaira-Spencer maps} \label{section_Kod_Spencer_maps}

	Let $B$ be the image of $U_{reg}(\Delta)$ in $\mathbb{P}L(\Delta)$. We consider the following natural family
		\[ \mathcal{X} := \{ (x,f) \in \mathbb{P} \times B| \, x \in Y_{f} \} \overset{pr_2}{\rightarrow} B.  \]
	
	\begin{definition} \normalfont
	Given a full-dimensional rational polytope $P \subset M_{\mathbb{R}}$ let $L(P)$ be the $\mathbb{C}$-vector space with basis $x^m$, $m \in \Delta \cap M$ and $l(P) = \# (P \cap M)$ its dimension. 
	\end{definition}
	
	Let $D:= \Spec \, \mathbb{C}[\epsilon]/(\epsilon^2)$ denote the dual numbers. Given a tangent vector $\phi: D \rightarrow B$ of $B$ at $f$ the pullback 
	\[ \mathcal{X} \times_B D \rightarrow D  \]
	defines an \textit{infinitesimal deformation} (of first order) of $Y_f$ in $\mathcal{X}$. Passing from $\mathcal{X} \rightarrow B$ to $\mathcal{X} \times_{\phi} D \rightarrow D$ some information gets lost. Still we can extract interesting information out of the situation (compare Kodaira \cite[Thm.4.3]{Kod86}). \\
	There are isomorphisms
	\begin{align*}
	& H^0(Y, N_{Y/\mathcal{X}}) \quad \, \, \, \cong \{ \textrm{Inf. def. of } Y_f \textrm{ in } \mathcal{X} \}/\textrm{iso.} \\
	& H^0(Y, N_{Y/\mathbb{P}}) \quad \, \, \, \, \cong \{ \textrm{Inf. def. of } Y_f \textrm{ in } \mathbb{P} \} / \textrm{iso.} \, \\
	& \Ext_{\mathcal{O}_{Y}}^1(\Omega_{Y}^1, \mathcal{O}_{Y}) \cong \{ \textrm{Inf. def. of } Y_f \} / \textrm{iso.}
	\end{align*}

	Let us derive two \textit{Kodaira-Spencer} maps in this context: There is an isomorphism
	\[ H^1(U,T_{U}) \cong \Ext_{\mathcal{O}_{U}}^1(\Omega_{U}^1, \mathcal{O}_{U}). \]
	since $U$ is smooth. Besides by (\cite[Lemma (12.5.6)]{KM92})
	\[ \Ext_{\mathcal{O}_Y}^1(\Omega_{Y}^1, \mathcal{O}_Y) \cong \Ext_{\mathcal{O}_U}^1(\Omega_U^1, \mathcal{O}_U) \cong H^1(U,T_U). \]
	since $\codim_Y(Y \setminus U) \geq 3$. For $V \subset \mathbb{P}$ an open subset as in Remark (\ref{remark_V_and_U_open_subsets_smoothness_codimension}) with $V \cap Y = U$ the Kodaira-Spencer map
	\[  H^0(U,N_{U/V}) \rightarrow H^1(U,T_U)  \]
	is the coboundary map in the tangent sheaf sequence for $U \subset V$. We derive one Kodaira-Spencer map $\kappa_{\mathbb{P},f}$ from the commutative diagram
	\[  
	\begin{tikzcd}[scale cd = 0.9]
		0 \arrow[r] & H^0(U,T_U) \arrow[r] \arrow{d}{\cong} & H^{0}(U,T_{V \vert{U}}) \arrow[r] \arrow{d}{\cong} & H^{0}(U, N_{U/V}) \arrow{d}{\cong} \arrow{r} & H^1(U,T_U)  \arrow{d}{\cong} \\
		0 \arrow[r] & H^0(Y,T_Y) \arrow[r] & H^{0}(Y, T_{\mathbb{P} \vert{Y}}^{**}) \arrow[r] & H^{0}(Y, N_{Y/\mathbb{P}}) \arrow[dashed]{r}{\kappa_{\mathbb{P}}=\kappa_{\mathbb{P},f}} & \Ext_{\mathcal{O}_Y}^1(\Omega_{Y}^1, \mathcal{O}_Y)
	\end{tikzcd}
	\]
	The other Kodaira-Spencer map $\kappa_f:H^0(Y,N_{Y/\mathcal{X}}) \rightarrow \Ext_{\mathcal{O}_{Y}}^1(\Omega_{Y}^1, \mathcal{O}_{Y})$ is gotten similarly by working with $\mathcal{X}$ instead of $\mathbb{P}$. These two maps associate two a deformation in $\mathbb{P}$ ($\mathcal{X}$) its equivalence class.

\section{Mavlyutov's vanishing Theorem} \label{section_Mavly_van_result}


\begin{theorem} \label{theorem_van_Mavlyutov} (\cite[Thm.2.4]{Mav08}, \cite[Thm.9.3.3]{CLS11}) \\
	Let $V$ be an $n$-dimensional complete toric variety to a simplicial fan. If D is a nef Cartier divisor on $V$, then
	\[ H^{p}(V, \Omega_{V}^q \otimes \mathcal{O}(D)) = 0  \]
	for $p> q$.
\end{theorem}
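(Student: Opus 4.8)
The plan is to deduce the statement from the case where $V$ is smooth --- where it is the classical Bott--Steenbrink--Danilov vanishing --- and then to prove that case via the generalized Euler sequence together with the combinatorial description of the cohomology of line bundles on toric varieties.

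\textbf{Step 1: reduction to the smooth case.} I would fix a toric resolution of singularities $\pi\colon\tilde V\to V$, i.e. a smooth refinement of the defining fan; since $V$ is complete, $\tilde V$ is a smooth complete toric variety and $\pi$ is a projective birational morphism, an isomorphism over the smooth locus of $V$. As $D$ is Cartier, $\pi^{*}D$ is Cartier on $\tilde V$, and it is nef since the pullback of a nef divisor along a proper morphism is nef; so by Step 2, $H^{p}(\tilde V,\Omega_{\tilde V}^{q}\otimes\mathcal{O}(\pi^{*}D))=0$ for $p>q$. To descend to $V$ I would use Danilov's results on toric differential forms: for a toric resolution of a \emph{simplicial} complete toric variety one has $\pi_{*}\Omega_{\tilde V}^{q}\cong\Omega_{V}^{q}$ (the reflexive $q$-forms in the convention of Section~\ref{section_refl_torsion_free_sheaves}) and $R^{j}\pi_{*}\Omega_{\tilde V}^{q}=0$ for $j>0$. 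With the projection formula $R^{j}\pi_{*}(\Omega_{\tilde V}^{q}\otimes\mathcal{O}(\pi^{*}D))\cong(R^{j}\pi_{*}\Omega_{\tilde V}^{q})\otimes\mathcal{O}_{V}(D)$, the Leray spectral sequence for $\pi$ then gives $H^{p}(V,\Omega_{V}^{q}\otimes\mathcal{O}(D))\cong H^{p}(\tilde V,\Omega_{\tilde V}^{q}\otimes\mathcal{O}(\pi^{*}D))=0$ for $p>q$. This is precisely where simpliciality enters: for a general normal toric variety $\pi_{*}\Omega_{\tilde V}^{q}$ is the sheaf of $h$-differentials, not the Zariski $q$-forms.

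\textbf{Step 2: the smooth case.} Assume $V$ smooth; being nef and Cartier on a complete toric variety, $D$ is basepoint free, hence semiample. I would induct on $q$. The case $q=0$ is Demazure vanishing, $H^{p}(V,\mathcal{O}_{V}(D))=0$ for $p>0$. For the inductive step, take exterior powers of the Euler sequence
\begin{equation*}
0\longrightarrow\Omega_{V}^{1}\longrightarrow\bigoplus_{\rho\in\Sigma[1]}\mathcal{O}_{V}(-D_{\rho})\longrightarrow\Cl(V)\otimes_{\mathbb{Z}}\mathcal{O}_{V}\longrightarrow0.
\end{equation*}
Since the last term is free, $\bigwedge^{q}\bigl(\bigoplus_{\rho}\mathcal{O}_{V}(-D_{\rho})\bigr)=\bigoplus_{|I|=q}\mathcal{O}_{V}(-D_{I})$, where $D_{I}:=\sum_{\rho\in I}D_{\rho}$, carries a finite filtration whose graded pieces are finite direct sums of copies of $\Omega_{V}^{i}$, $i=0,\dots,q$, the bottom one being $\Omega_{V}^{q}$. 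Twisting by $\mathcal{O}(D)$, chasing the long exact sequences of this filtration and inserting the inductive hypothesis for $\Omega_{V}^{i}\otimes\mathcal{O}(D)$ with $i<q$, the claim $H^{p}(V,\Omega_{V}^{q}\otimes\mathcal{O}(D))=0$ for $p>q$ reduces to
\begin{equation*}
H^{p}\bigl(V,\mathcal{O}_{V}(D-D_{I})\bigr)=0\qquad\text{for all }p>|I|,
\end{equation*}
which I would verify from the combinatorial formula for the cohomology of a torus-invariant divisor: the local-cohomology (polytopal) complex attached to $D-D_{I}$ is acyclic in the stated range because $D$ is nef. This step can also be quoted directly from \cite[Thm.~9.3.2]{CLS11} or \cite{Mav08}.

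\textbf{Main obstacle.} The substantive part is the smooth nef case, specifically the vanishing $H^{p}(V,\mathcal{O}_{V}(D-D_{I}))=0$ for $p>|I|$ and the spectral-sequence bookkeeping that kills precisely the range $p>q$; for ample $D$ this would be soft (Kodaira--Nakano), while nefness forces the sharper toric input. A secondary, essentially routine, point is checking Danilov's identities $\pi_{*}\Omega_{\tilde V}^{q}\cong\Omega_{V}^{q}$ and $R^{>0}\pi_{*}\Omega_{\tilde V}^{q}=0$ for the chosen resolution of the simplicial variety $V$.
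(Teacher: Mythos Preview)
The paper does not prove this theorem; it is quoted from \cite{Mav08} and \cite{CLS11} and used as a black box in Sections~\ref{section_number_of_moduli} and following, so there is nothing in the paper itself to compare your argument against beyond the citations.

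Your outline is correct, but organized differently from the cited sources. Both \cite{Mav08} and \cite[\S9.3]{CLS11} run your Step~2 filtration argument \emph{directly on the simplicial variety}, bypassing your Step~1 entirely: the generalized Euler sequence for $\Omega_V^1$ already exists for simplicial complete toric varieties via the Cox construction, and the Batyrev--Borisov vanishing $H^p(V,\mathcal{O}_V(D-D_I))=0$ for $p>|I|$ holds in that generality, so no resolution is needed. Your reduction to the smooth case also works, but the input $R^{>0}\pi_*\Omega_{\tilde V}^q=0$ that you label ``essentially routine'' is itself a theorem of comparable depth to the one being proved (it is the toric analogue of Steenbrink's result on Hodge-rationality of quotient singularities), so the detour buys little. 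A minor correction: the identification $\pi_*\Omega_{\tilde V}^q\cong\Omega_V^q$ in fact holds for any normal toric variety and any toric resolution, not only in the simplicial case; simpliciality enters rather through the availability of the Euler sequence and of the Batyrev--Borisov vanishing in your Step~2.
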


\begin{construction} \label{construction_frob_split}
	(Multiplication morphism) \\ (\cite[2.5, Prop.3.2]{Fuj06}, \cite[Lemma 9.2.6, Proof of Thm. 9.3.1]{CLS11}) \\
	\normalfont
	Let $V$ be a normal (not necessarily complete) toric variety. There is a useful construction to reduce computations of cohomology groups of $\mathbb{Q}$-Cartier Weil divisors $D$ to cohomology of multiples $mD$ of $D$, which are Cartier. \\ \\
	Namely for $l \in \mathbb{N}_{\geq 1}$ the map $\overline{\phi}_l: N \rightarrow N$ given by 
	\[  n \mapsto l \cdot n \] 
	yields a toric morphism $\phi_l: V \rightarrow V$. $\phi_l$ induces an injection
	\begin{align*}
		H^p(V, \Omega_{V}^q \otimes_r \mathcal{O}(D)) & \rightarrow H^p(V, \Omega_{V}^q \otimes_r \mathcal{O}(lD))  \\
		& \cong H^p(V, \Omega_{V}^q \otimes \mathcal{O}(lD)), \quad p \geq 0,
	\end{align*} 
	where the last isomorphism follows 
	since $\mathcal{O}(lD)$ is Cartier. \\
	For us this result isr powerful in connection with Theorem \ref{theorem_van_Mavlyutov} above.

\end{construction}

\section{Computation of $\ker(\kappa_{\mathbb{P},f})$}
\label{section_number_of_moduli}

	\begin{theorem} \label{theorem_number_of_moduli}
		Let $\Delta$ be an $n$-dimensional lattice polytope, where $n \geq 2$, with $l^*(\Delta) > 0$. If $n=2$ assume that $l^*(\Delta) \geq 2$. Then given $f \in B$
	\begin{align} 
		\ker(\kappa_{\mathbb{P},f}) \cong  \Lie \, \Aut(\mathbb{P}).
		\end{align} 		
	\end{theorem}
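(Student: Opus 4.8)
The plan is to identify both sides of the claimed isomorphism with cohomology groups that can be compared directly. By Remark \ref{remark_Kaehler_diff_forms_Aut_group} we have $\Lie \, \Aut(\mathbb{P}) \cong H^0(\mathbb{P}, T_{\mathbb{P}})$, so the statement reduces to showing $\ker(\kappa_{\mathbb{P},f}) \cong H^0(\mathbb{P}, T_{\mathbb{P}})$. Looking at the defining commutative diagram of $\kappa_{\mathbb{P},f}$, the kernel of $\kappa_{\mathbb{P},f}$ is the image of $H^0(Y, T_{\mathbb{P}|Y}^{**}) \to H^0(Y, N_{Y/\mathbb{P}})$, hence fits into the exact sequence
\begin{align*}
0 \rightarrow H^0(Y,T_Y) \rightarrow H^0(Y, T_{\mathbb{P}|Y}^{**}) \rightarrow \ker(\kappa_{\mathbb{P},f}) \rightarrow 0.
\end{align*}
So the first task is to compute $H^0(Y, T_{\mathbb{P}|Y}^{**})$ and $H^0(Y,T_Y)$, and then to relate the former to $H^0(\mathbb{P}, T_{\mathbb{P}})$.

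First I would set up the restriction-to-$Y$ sequence on the smooth toric open $V \subset \mathbb{P}$ with $U = V \cap Y$: tensoring $0 \to \mathcal{O}_V(-Y) \to \mathcal{O}_V \to \mathcal{O}_U \to 0$ with the locally free $T_V$ and pushing forward to $Y$ (controlling the error term via the local cohomology / Grothendieck vanishing machinery of Theorem \ref{theorem_Grothendieck_van_local_coh}, exactly as in Proposition \ref{proposition_exact_normal_sheaf_sequence_defining}), one gets
\begin{align*}
0 \rightarrow H^0(\mathbb{P}, T_{\mathbb{P}} \otimes_r \mathcal{O}(-Y)) \rightarrow H^0(\mathbb{P}, T_{\mathbb{P}}) \rightarrow H^0(Y, T_{\mathbb{P}|Y}^{**}) \rightarrow H^1(\mathbb{P}, T_{\mathbb{P}} \otimes_r \mathcal{O}(-Y)).
\end{align*}
The plan is to show the two outer $T_{\mathbb{P}} \otimes_r \mathcal{O}(-Y)$ cohomology groups vanish, which would give $H^0(Y, T_{\mathbb{P}|Y}^{**}) \cong H^0(\mathbb{P}, T_{\mathbb{P}})$. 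Dualizing via Serre duality on the complete simplicial toric $\mathbb{P}$ (using that $T_{\mathbb{P}}$, $\Omega_{\mathbb{P}}^{n-1}$ and the relevant reflexive rank-one sheaves are MCM by Proposition \ref{proposition_CM_sheaves}, and that $T_{\mathbb{P}} \cong \Omega_{\mathbb{P}}^{n-1} \otimes_r \mathcal{O}(-K_{\mathbb{P}})$ on a variety with trivial-enough singularities), these become cohomology groups of $\Omega_{\mathbb{P}}^{n-1}$ twisted by $\mathcal{O}(Y + K_{\mathbb{P}})$ in degrees $n$ and $n-1$. Since $K_{\mathbb{P}} + Y$ is nef (the adjoint divisor), I would invoke Mavlyutov's vanishing Theorem \ref{theorem_van_Mavlyutov} together with the multiplication-morphism trick of Construction \ref{construction_frob_split} (to reduce the $\mathbb{Q}$-Cartier nef divisor to a Cartier multiple) to kill $H^p(\mathbb{P}, \Omega_{\mathbb{P}}^{n-1} \otimes \mathcal{O}(K_{\mathbb{P}}+Y))$ for $p > n-1$, i.e. $p = n$; the degree $n-1$ vanishing $H^{n-1}(\mathbb{P}, \Omega_{\mathbb{P}}^{n-1} \otimes \mathcal{O}(K_{\mathbb{P}}+Y))=0$ should likewise follow from Bott-type vanishing for nef (not merely big) divisors, and this is precisely where the hypotheses $l^*(\Delta)>0$ (so $F(\Delta)\neq\emptyset$, making $K_{\mathbb{P}}+Y$ effective and nef) and, in the surface case, $l^*(\Delta)\geq 2$ enter — the latter presumably to ensure the nef adjoint is not too degenerate for the vanishing to hold.

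It then remains to show $H^0(Y, T_Y) = 0$, i.e. that the minimal model $Y$ has no infinitesimal automorphisms; equivalently, by Remark \ref{remark_Kaehler_diff_forms_Aut_group}, that $\Aut(Y)$ is finite. I would again pass to the smooth locus $U$, dualize $\Omega_U^1$, and use Serre duality on $Y$ (legitimate since $Y$ has terminal hence $\mathbb{Q}$-Gorenstein, mildly singular, and the sheaves involved are reflexive/MCM on it, with $\codim_Y(Y\setminus U)\geq 3$) to rewrite $H^0(Y,T_Y)$ in terms of $H^{n-1}(Y, \Omega_Y^{n-1} \otimes \mathcal{O}(K_Y))$; the nefness of $K_Y$ from Corollary \ref{corollary_min_model_Z_f}, combined with an adjunction argument relating $\Omega_Y^{n-1}$ to restrictions of $\Omega_{\mathbb{P}}^\bullet$ and another application of Theorem \ref{theorem_van_Mavlyutov}, should force this to vanish, again using $l^*(\Delta)>0$ (resp. $\geq 2$ for $n=2$) to exclude the degenerate cases (e.g. $\Delta$ with no interior points, where $Y$ could be rational with large automorphism group).

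The main obstacle I anticipate is the degree $n-1$ vanishing $H^{n-1}(\mathbb{P}, \Omega_{\mathbb{P}}^{n-1} \otimes \mathcal{O}(K_{\mathbb{P}}+Y)) = 0$ and the parallel statement $H^0(Y,T_Y)=0$: Mavlyutov's theorem only gives vanishing strictly above the form degree, so $p = n-1 = q$ is exactly the boundary case it does not cover, and one must genuinely use that the adjoint divisor is \emph{nef and sufficiently nondegenerate} rather than merely big. This is almost certainly where the numerical hypotheses on $l^*(\Delta)$ are forced, and handling the low-dimensional ($n=2$, and to a lesser extent $n=3$) cases, where the singularities of $\mathbb{P}$ and the positivity of the adjoint behave differently, will require separate care; I would expect to treat $n=2$ by a direct argument on the (smooth, after resolving the terminal = smooth surface case) toric surface and its anticanonically-controlled curve $Y$.
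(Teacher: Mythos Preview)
Your architecture is exactly the paper's: from the defining diagram, $\ker(\kappa_{\mathbb{P},f})\cong H^0(Y,T_{\mathbb{P}\vert Y}^{**})/H^0(Y,T_Y)$, and one shows the numerator equals $H^0(\mathbb{P},T_{\mathbb{P}})$ via the restriction sequence plus Mavlyutov vanishing, while the denominator vanishes. But there is a form-degree slip in your Serre duality that manufactures the obstacle you worry about. After writing $T_{\mathbb{P}}\otimes_r\mathcal{O}(-Y)\cong\Omega_{\mathbb{P}}^{n-1}\otimes_r\mathcal{O}(-Y-K_{\mathbb{P}})$ and passing to a Cartier multiple via Construction~\ref{construction_frob_split}, Serre duality gives
\[
H^k\bigl(\mathbb{P},\Omega_{\mathbb{P}}^{n-1}\otimes\mathcal{O}(-m(Y+K_{\mathbb{P}}))\bigr)\;\cong\;H^{n-k}\bigl(\mathbb{P},\Omega_{\mathbb{P}}^{1}\otimes\mathcal{O}(m(Y+K_{\mathbb{P}}))\bigr)^*,
\]
with $\Omega_{\mathbb{P}}^{1}$, not $\Omega_{\mathbb{P}}^{n-1}$. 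Theorem~\ref{theorem_van_Mavlyutov} then applies with $q=1$ and kills the right-hand side whenever $n-k>1$, i.e.\ for both $k=0$ and $k=1$ as soon as $n\geq 3$; your ``main obstacle'' at $p=n-1=q$ is an artefact of the wrong form degree and disappears. Only $n=2$ is a genuine boundary, and there the paper appeals to the explicit formula of \cite[Cor.~2.7]{Mav08}, using that the polytope attached to $Y+K_{\mathbb{P}}$ is $F(\Delta)$ (Remark~\ref{remark_complete_lin_system_min_model_can_closure}) and that $l^*(\Delta)\geq 2$ forces $F(\Delta)$ to be more than a point.

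For $H^0(Y,T_Y)=0$ your Serre-duality sketch on $Y$ is vaguer than necessary and the index you write is off (the dual of $T_Y$ twisted into the canonical involves $\Omega_Y^1$, not $\Omega_Y^{n-1}$, since $\dim Y=n-1$). The paper's argument avoids duality on $Y$ altogether: because $l^*(\Delta)>0$, the sheaf $\mathcal{O}(K_Y)$ has a nonzero section, and multiplication by it gives an injection $T_Y\hookrightarrow T_Y\otimes_r\mathcal{O}(K_Y)\cong\Omega_Y^{n-2}$; then $h^0(Y,\Omega_Y^{n-2})=h^0(\tilde{Y},\Omega_{\tilde{Y}}^{n-2})=0$ by Danilov--Khovanskii \cite{DK86} on a smooth toric model $\tilde{Y}$, valid for $n\geq 3$. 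For $n=2$ one simply uses that $Y$ is a smooth curve of genus $l^*(\Delta)\geq 2$. Thus $l^*(\Delta)>0$ enters only to produce the section of $K_Y$, and the sharpened hypothesis $l^*(\Delta)\geq 2$ is needed in both halves of the surface case, not for any ``nondegeneracy of the nef adjoint'' in higher dimension.
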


	\begin{proof}
		\normalfont	
		We write $Y$ and $\kappa_{\mathbb{P}}$ for $Y_f$ and $\kappa_{\mathbb{P},f}$. By the exact sequence from section \ref{section_Kod_Spencer_maps}
		\[ \ker(\kappa_{\mathbb{P}}) \cong  H^0(Y,T_{\mathbb{P} \vert{Y}}^{**})/H^0(Y,T_Y).  \]
		By Lemma (\ref{theorem_number_of_moduli_2}) below if $n \geq 3$ then
		\[ H^0(Y,T_Y) = 0. \]
		If $n=2$ this follows from $g(Y) \geq 2$ since the Fine interior is not a point. By Remark \ref{remark_Kaehler_diff_forms_Aut_group} we are left to show that 
		\[ H^0(Y,T_{\mathbb{P} \vert{Y}}^{**}) \cong H^0(\mathbb{P}, T_{\mathbb{P}}).  \]
		
		\begin{lemma}
			There is an exact sequence
					\begin{align*}
				0 \rightarrow T_{\mathbb{P}} \otimes_r \mathcal{O}(-Y) \rightarrow T_{\mathbb{P}} \rightarrow T_{\mathbb{P} \vert{Y}}^{**} \rightarrow 0.
			\end{align*}	
		\end{lemma}
		
		\begin{proof}		
		Let $\iota: V \subset \mathbb{P}$ be the inclusion of the union of all torus orbits of dimension $\geq n-2$ and $U = V \cap Y$. Consider the exact sequence
		\[ 0 \rightarrow T_V \otimes \mathcal{O}(-Y_{\vert{V}}) \rightarrow T_V \rightarrow T_{V \vert{U}} \rightarrow 0  \]
		and take the pushforward under $\iota$. Let
		\[ \mathcal{F} :=  \Omega_{\mathbb{P}}^{n-1} \otimes_r \mathcal{O}(-Y-K_{\mathbb{P}}) \cong  T_{\mathbb{P}} \otimes_r \mathcal{O}(-Y).    \]
		Then $R^1 \iota_*(\mathcal{F}_{\vert{V}}) = 0$ since given $x \in V$ choose an affine toric neighborhood $W$ of $x$ and (using Construction \ref{construction_frob_split}) replace $\mathcal{O}_W(-Y-K_{\mathbb{P}})$ by an $m$-times multiple which is Cartier in the definition of $\mathcal{F}$. Then by Proposition \ref{proposition_CM_sheaves} $\mathcal{F}_{\vert{W}}$ is MCM and the vanishing
		\[ H^1(V \cap W, \mathcal{F}_{\vert{V \cap W}}) = 0  \]
		follows as in the proof of Proposition \ref{proposition_exact_normal_sheaf_sequence_defining} from a local cohomology sequence.	
	\end{proof}

		Take the long exact cohomology sequence
		\begin{align*} 0 &\rightarrow H^{0}(\mathbb{P}, T_{\mathbb{P}} \otimes_r \mathcal{O}(-Y)) \rightarrow H^{0}(\mathbb{P},T_{\mathbb{P}}) \rightarrow H^{0}(Y,T_{\mathbb{P} \vert{Y}}^{**}) \\
			& \rightarrow H^{1}(\mathbb{P},T_{\mathbb{P}} \otimes_r \mathcal{O}(-Y)).
		\end{align*}

		Write
		\[ T_{\mathbb{P}} \otimes_r \mathcal{O}(-Y) \cong \Omega_{\mathbb{P}}^{n-1} \otimes_r \mathcal{O}(-Y-K_{\mathbb{P}}).  \]
		By Construction \ref{construction_frob_split} we have
		\[ H^k(\mathbb{P}, \Omega_{\mathbb{P}}^{n-1} \otimes_r \mathcal{O}(-Y-K_{\mathbb{P}})) \subset H^k(\mathbb{P}, \Omega_{\mathbb{P}}^{n-1} \otimes \mathcal{O}(-mY-mK_{\mathbb{P}}))  \]	
		where $m \geq 1$ is such that $mY, mK_{\mathbb{P}}$ are Cartier. Now use Serre duality and Theorem \ref{theorem_van_Mavlyutov} to deduce
		\begin{align*}
			 H^k(\mathbb{P}, \Omega_{\mathbb{P}}^{n-1} \otimes \mathcal{O}(-mY-mK_{\mathbb{P}})) \cong &H^{n-k}(\mathbb{P}, \Omega_{\mathbb{P}}^1 \otimes \mathcal{O}(mY+mK_{\mathbb{P}})) = 0
		\end{align*}
		for $k=0,1$ if $n \geq 3$. If $n=2$ the vanishing for $k=1$ follows from the precise formula in (\cite[Cor.2.7]{Mav08}) (to $Y
		+K_{\mathbb{P}}$ is associated the polytope $F(\Delta)$, see Remark \ref{remark_complete_lin_system_min_model_can_closure}).

\end{proof}

\begin{lemma} \label{theorem_number_of_moduli_2}
	Let $\Delta$ be an $n$-dimensional lattice polytope with $n \geq 3$ and $l^*(\Delta) > 0$. Then
	\[ H^0(Y,T_Y) = 0. \]
\end{lemma}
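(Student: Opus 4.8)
The plan is to run a uniruledness argument: a nonzero global vector field on $Y$ would produce a positive-dimensional group acting on $Y$, hence a rational curve through a general point, contradicting the nefness of $K_Y$. The first step is to move the question to the ambient toric variety. As in the proof of Theorem \ref{theorem_number_of_moduli}, the sequence $0 \to T_{\mathbb{P}} \otimes_r \mathcal{O}(-Y) \to T_{\mathbb{P}} \to T_{\mathbb{P} \vert{Y}}^{**} \to 0$ together with the exact sequence $0 \to T_Y \to T_{\mathbb{P} \vert{Y}}^{**} \to N_{Y/\mathbb{P}} \to 0$ (obtained by pushing forward the normal bundle sequence on $U = V \cap Y \subset V$, exactly as in Proposition \ref{proposition_exact_normal_sheaf_sequence_defining}) gives an inclusion $H^0(Y,T_Y) \hookrightarrow H^0(Y, T_{\mathbb{P} \vert{Y}}^{**})$; and one gets $H^0(Y, T_{\mathbb{P} \vert{Y}}^{**}) \cong H^0(\mathbb{P}, T_{\mathbb{P}})$ as soon as $H^0(\mathbb{P}, T_{\mathbb{P}}\otimes_r\mathcal{O}(-Y)) = H^1(\mathbb{P}, T_{\mathbb{P}}\otimes_r\mathcal{O}(-Y)) = 0$. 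Under this inclusion $H^0(Y,T_Y)$ is precisely the space of global vector fields on $\mathbb{P}$ that are tangent to $Y$ along its smooth locus.

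For the required vanishing I would use the isomorphism $T_{\mathbb{P}}\otimes_r\mathcal{O}(-Y) \cong \Omega_{\mathbb{P}}^{n-1}\otimes_r\mathcal{O}(-Y-K_{\mathbb{P}})$, pass to a Cartier multiple $m(Y+K_{\mathbb{P}})$ via Construction \ref{construction_frob_split}, and apply Serre duality (legitimate since the sheaves involved are MCM by Proposition \ref{proposition_CM_sheaves}): for $k = 0,1$,
\[ H^k(\mathbb{P}, \Omega_{\mathbb{P}}^{n-1}\otimes\mathcal{O}(-mY-mK_{\mathbb{P}})) \cong H^{n-k}(\mathbb{P}, \Omega_{\mathbb{P}}^1\otimes\mathcal{O}(m(Y+K_{\mathbb{P}})))^*, \]
and the right-hand side vanishes by Mavlyutov's Theorem \ref{theorem_van_Mavlyutov}, since $m(Y+K_{\mathbb{P}})$ is nef and Cartier and $n - k \geq n-1 > 1$. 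This is exactly the computation already made in the proof of Theorem \ref{theorem_number_of_moduli}, and it is where the hypothesis $n \geq 3$ enters.

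It then remains to show that no nonzero global vector field $\xi$ on $\mathbb{P}$ is tangent to $Y$. If such a $\xi \neq 0$ existed, its flow would generate a positive-dimensional connected subgroup $G \subseteq \Aut(\mathbb{P})$ preserving $Y$ (an integral curve of $\xi$ starting on the smooth locus of $Y$ stays in $Y$ by uniqueness for the flow ODE, and $Y$ is closed). Since $\mathbb{P}$ is a complete toric variety, the connected component of $\Aut(\mathbb{P})$ is a linear algebraic group (Demazure), so $G$ contains a subgroup isomorphic to $\mathbb{G}_a$ or $\mathbb{G}_m$ acting nontrivially on $Y$; taking orbit closures of general, non-fixed points of $Y$ (images of $\mathbb{P}^1$) shows $Y$ is uniruled. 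But by Corollary \ref{corollary_min_model_Z_f} the variety $Y$ has terminal singularities and nef canonical class, hence is not uniruled (Miyaoka--Mori: on a resolution $\pi: \tilde{Y}\to Y$ terminality gives $K_{\tilde{Y}} = \pi^*K_Y + E$ with $E\geq 0$, so a covering family of rational curves on $\tilde{Y}$ would meet $K_{\tilde{Y}}$ negatively while $\pi^*K_Y$ is nef and $E$ effective). This contradiction forces $H^0(Y,T_Y)=0$.

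I expect the main obstacle to be the cohomological reduction of the first two paragraphs, and within it the vanishing $H^1(\mathbb{P}, T_{\mathbb{P}}\otimes_r\mathcal{O}(-Y)) = 0$, which genuinely requires $n \geq 3$ (for $n=2$ it fails, matching the fact that $Y$ may be an elliptic curve there, which is why that case is treated separately); one must also take care that the reflexive-restriction and pushforward manipulations are valid on the singular loci, but these are of exactly the type already carried out in Proposition \ref{proposition_exact_normal_sheaf_sequence_defining} and the proof of Theorem \ref{theorem_number_of_moduli}. The final geometric step is comparatively soft, relying only on the linearity of $\Aut(\mathbb{P})$ and the standard non-uniruledness of a terminal variety with nef canonical class.
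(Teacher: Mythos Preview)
Your argument is correct, but it takes a genuinely different route from the paper. The paper's proof is purely Hodge-theoretic: since $l^*(\Delta)>0$ there is a nonzero section of $\mathcal{O}_Y(K_Y)$, which gives an injection $T_Y \hookrightarrow T_Y\otimes_r\mathcal{O}(K_Y)\cong \Omega_Y^{n-2}$; then $h^0(Y,\Omega_Y^{n-2})=0$ is read off from Danilov--Khovanskii on a smooth toric resolution (this is where $0<n-2<\dim Y=n-1$, i.e.\ $n\geq 3$, is used). No appeal to $\Aut(\mathbb{P})$ or uniruledness is needed.

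Your approach instead lifts a hypothetical nonzero section of $T_Y$ to a global vector field on $\mathbb{P}$ via the isomorphism $H^0(Y,T_{\mathbb{P}\vert Y}^{**})\cong H^0(\mathbb{P},T_{\mathbb{P}})$, and then runs a geometric contradiction through non-uniruledness of a terminal variety with nef $K_Y$. Two remarks. First, there is no logical circularity even though you invoke the vanishing of $H^k(\mathbb{P},T_{\mathbb{P}}\otimes_r\mathcal{O}(-Y))$ from the proof of Theorem~\ref{theorem_number_of_moduli}: that computation is independent of Lemma~\ref{theorem_number_of_moduli_2}, so it is legitimate to quote it here (though in a final write-up you would want to place it before both results). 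Second, and more interestingly, your argument never uses the existence of an interior lattice point: it only needs $K_Y$ nef and $Y$ terminal, i.e.\ $F(\Delta)\neq\emptyset$. So you in fact establish the strengthening that the paper only conjectures in the sentence following the lemma. The price is a heavier toolkit (linearity of $\Aut(\mathbb{P})$, Miyaoka--Mori), whereas the paper's proof is a two-line reduction to \cite{DK86}.
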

\normalfont

\begin{proof}
	Using an ideal sheaf sequence we get
	\begin{align*}
		0 \rightarrow  T_Y \rightarrow \Omega_Y^{n-1} \rightarrow ...
	\end{align*}
	where we have used $\Omega_Y^{n-1} \cong T_Y \otimes_r \mathcal{O}(K_Y)$ and $h^0(Y,\mathcal{O}(K_Y)) = l^*(\Delta) > 0$. Thus
	\begin{align*}
		h^0(Y,T_Y) &\leq h^0(Y, \Omega_Y^{n-1}) \\
		& = 0
	\end{align*}
	For the last vanishing: Take a common toric resolution of singularities $\mathbb{P} \overset{p}{\leftarrow} \tilde{\mathbb{P}} \rightarrow \mathbb{P}_{\Delta}$ with closure $\tilde{Y}$ of $Z_f$ in $\tilde{\mathbb{P}}$. Then $h^0(\tilde{Y}, \Omega_{\tilde{Y}}^{n-1}) = 0$ by (\cite{DK86}) and $p_* \Omega_{\tilde{Y}}^{n-1} = \Omega_{Y}^{n-1}$.
\end{proof}

We guess that the weaker assumption $F(\Delta) \neq \emptyset$ is sufficient for the above Lemma.

\section{The cokernel of $\kappa_{\mathbb{P},f}$}

\begin{corollar} \label{corollar_all_inf_def_come_from...}
	Given the conditions of the theorem if $n \geq 4$ then the following sequence is exact
	\begin{align*}
		0 \rightarrow Im(\kappa_{\mathbb{P},f}) \rightarrow \Ext_{\mathcal{O}_{Y}}^1(\Omega_{Y}^1, \mathcal{O}_Y) \rightarrow \Ext_{\mathcal{O}_{\mathbb{P}}}^1(\Omega_{\mathbb{P}}^1, \mathcal{O}_{\mathbb{P}}) \rightarrow 0 
	\end{align*}
	that is all infinitesimal deformations of $Y_f$ arise from $Im(\kappa_{\mathbb{P},f})$ or from infinitesimal deformations of $\mathbb{P}$.
\end{corollar}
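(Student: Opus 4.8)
The plan is to run the classical Kodaira--Spencer comparison for a hypersurface — relating $T_Y$, the reflexive restriction $(T_{\mathbb{P}\vert Y})^{**}$ and the normal sheaf $N_{Y/\mathbb{P}}$ — but carried out on the smooth loci $U\subset V$ of Remark~\ref{remark_V_and_U_open_subsets_smoothness_codimension} and fed by the vanishing results of Section~\ref{section_Mavly_van_result}. All cohomology of the singular spaces will be moved to $U$ and $V$ by the usual Grothendieck-vanishing/local-cohomology argument, exactly as is done throughout Section~\ref{section_number_of_moduli}.

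\emph{Step 1: the key vanishings.} As in the proof of Theorem~\ref{theorem_number_of_moduli}, write $T_{\mathbb{P}}\otimes_r\mathcal{O}(-Y)\cong\Omega_{\mathbb{P}}^{n-1}\otimes_r\mathcal{O}(-Y-K_{\mathbb{P}})$, use Construction~\ref{construction_frob_split} to inject its cohomology into that of a Cartier multiple, and combine Serre duality with Theorem~\ref{theorem_van_Mavlyutov} applied to the nef Cartier divisor $m(Y+K_{\mathbb{P}})$ to obtain
\[
	H^{k}(\mathbb{P},\,T_{\mathbb{P}}\otimes_r\mathcal{O}(-Y))\hookrightarrow H^{n-k}\!\big(\mathbb{P},\,\Omega_{\mathbb{P}}^{1}\otimes\mathcal{O}(m(Y+K_{\mathbb{P}}))\big)^{*}=0\qquad\text{for }n-k\ge 2 ,
\]
i.e.\ vanishing for $k\le n-2$; the hypothesis $n\ge 4$ enters precisely to include $k=2$. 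I also record $H^{1}(Y,N_{Y/\mathbb{P}})=0$: feed the sequence~(\ref{normal_sheaf_defining_exact_sequence}) into cohomology and use $H^{2}(\mathbb{P},\mathcal{O}_{\mathbb{P}})=0$ (Demazure) together with $H^{1}(\mathbb{P},\mathcal{O}_{\mathbb{P}}(Y))=0$, the latter because $Y$ is nef (Proposition~\ref{proposition_hyp_nef_and_big}), so Construction~\ref{construction_frob_split} and Demazure vanishing for the nef Cartier multiple $mY$ apply.

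\emph{Step 2: descent to the smooth loci and comparison with $\Ext^{1}$.} On the smooth toric open $V$ the divisor $Y|_{V}$ is Cartier, so $0\to T_{V}\otimes\mathcal{O}_{V}(-Y)\to T_{V}\to T_{V\vert U}\to 0$ is exact. Since $T_{\mathbb{P}}\otimes_r\mathcal{O}(-Y)$ is MCM (this is what makes the sub-lemma in the proof of Theorem~\ref{theorem_number_of_moduli} work) and $\codim_{\mathbb{P}}(\mathbb{P}\setminus V)\ge 3$, Theorem~\ref{theorem_Grothendieck_van_local_coh} and a local cohomology sequence upgrade Step~1 to $H^{1}(V,T_{V}\otimes\mathcal{O}_{V}(-Y))=H^{2}(V,T_{V}\otimes\mathcal{O}_{V}(-Y))=0$, whence the long exact sequence yields an isomorphism $H^{1}(V,T_{V})\xrightarrow{\ \sim\ }H^{1}(U,T_{V\vert U})$. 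The same use of \cite[Lemma~(12.5.6)]{KM92} as in Section~\ref{section_Kod_Spencer_maps} (now for $\mathbb{P}\supset V$) gives $\Ext^{1}_{\mathcal{O}_{\mathbb{P}}}(\Omega_{\mathbb{P}}^{1},\mathcal{O}_{\mathbb{P}})\cong H^{1}(V,T_{V})$, and we already know $\Ext^{1}_{\mathcal{O}_{Y}}(\Omega_{Y}^{1},\mathcal{O}_{Y})\cong H^{1}(U,T_{U})$.

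\emph{Step 3: the tangent sheaf sequence on $U$, and assembly.} By construction $\kappa_{\mathbb{P},f}$ is the connecting map of $0\to T_{U}\to T_{V\vert U}\to N_{U/V}\to 0$. Since $N_{Y/\mathbb{P}}$ is MCM (Corollary~\ref{lemma_normal_sheaf_Y_P_CM}) and $\codim_{Y}(Y\setminus U)\ge 3$, Step~1 and Grothendieck vanishing give $H^{1}(U,N_{U/V})\cong H^{1}(Y,N_{Y/\mathbb{P}})=0$, so the long exact cohomology sequence reads
\[
	H^{0}(U,N_{U/V})\xrightarrow{\ \kappa_{\mathbb{P},f}\ }H^{1}(U,T_{U})\xrightarrow{\ \alpha\ }H^{1}(U,T_{V\vert U})\to 0 ,
\]
i.e.\ $0\to Im(\kappa_{\mathbb{P},f})\to H^{1}(U,T_{U})\xrightarrow{\alpha}H^{1}(U,T_{V\vert U})\to 0$ is exact; rewriting the outer terms by the isomorphisms of Step~2 gives the asserted sequence, the last arrow being the natural map $\Ext^{1}_{\mathcal{O}_{Y}}(\Omega_{Y}^{1},\mathcal{O}_{Y})\to\Ext^{1}_{\mathcal{O}_{\mathbb{P}}}(\Omega_{\mathbb{P}}^{1},\mathcal{O}_{\mathbb{P}})$ induced by $T_{U}\hookrightarrow T_{V\vert U}$. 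The only substantive input is the vanishing $H^{2}(\mathbb{P},T_{\mathbb{P}}\otimes_r\mathcal{O}(-Y))=0$ of Step~1 — this is where $n\ge 4$ is genuinely needed, since for $n=3$ Mavlyutov's theorem only kills $H^{n-k}$ with $n-k\ge 2$, i.e.\ $k\le 1$. The remaining difficulty is purely bookkeeping with MCM sheaves to move cohomology between $\mathbb{P},Y$ and their smooth loci; the one point requiring a little care is verifying that the coboundary $\alpha$, after all the identifications, really is the map named in the statement — a routine compatibility of two coboundary constructions once every sheaf sequence is laid over $U\subset V$.
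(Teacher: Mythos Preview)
Your proof is correct and follows essentially the same route as the paper: both extend the tangent-sheaf long exact sequence on $U$, kill $H^{1}(U,N_{U/V})$ via $H^{1}(Y,N_{Y/\mathbb{P}})=0$ together with the MCM property of $N_{Y/\mathbb{P}}$ and local cohomology, and identify $H^{1}(U,T_{V\vert U})\cong H^{1}(V,T_{V})\cong\Ext^{1}_{\mathcal{O}_{\mathbb{P}}}(\Omega_{\mathbb{P}}^{1},\mathcal{O}_{\mathbb{P}})$ by showing $H^{k}(V,T_{V}\otimes\mathcal{O}(-Y))=0$ for $k=1,2$ via Construction~\ref{construction_frob_split}, Serre duality and Mavlyutov's vanishing. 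The only cosmetic difference is ordering: you first establish the vanishing on $\mathbb{P}$ and then transfer it to $V$ by Grothendieck vanishing, whereas the paper phrases the computation on $V$ and invokes the local cohomology sequence to reduce to $\mathbb{P}$; the inputs and the place where $n\ge 4$ enters are identical.
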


\begin{proof}
	Let $V$ denote the smooth locus of $\mathbb{P}$, $Z= \mathbb{P} \setminus V$ and $U= V \cap Y$. Then $Z$ has codimension $\geq 3$ in $\mathbb{P}$ and $Y \setminus U$ has codimension $\geq 2$ in $Y$. 
	We could extend the upper exact sequence in Section \ref{section_Kod_Spencer_maps} to
	\begin{align*}
		0 & \rightarrow H^0(U,T_U) \rightarrow H^0(U, T_{V \vert{U}}) \rightarrow H^0(U,N_{U/V}) \overset{\kappa_{\mathbb{P}}}{\rightarrow} H^1(U,T_U) \\
		& \rightarrow H^1(U,T_{V \vert{U}}) \rightarrow H^1(U,N_{U/V})
	\end{align*}
	 By the exact sequence (\ref{normal_sheaf_defining_exact_sequence}) $H^1(Y,N_{Y/\mathbb{P}}) = 0$. Relate $H^1(U,N_{U/V})$ and $H^1(Y,N_{Y/\mathbb{P}})$ via a local cohomology sequence and use (Lemma \ref{lemma_normal_sheaf_Y_P_CM}, Theorem \ref{theorem_Grothendieck_van_local_coh})
	\[ H_{Z \cap Y}^k(Y,N_{Y/\mathbb{P}}) = 0 \quad k < n-1.  \]
	It follows $H^1(U,N_{U/V}) = 0$. For $T_{V \vert{U}}$ use an ideal sheaf sequence
	\begin{align*}
		H^1(V,T_V \otimes \mathcal{O}(-Y_{\vert{V}})) \rightarrow H^1(V,T_V) \rightarrow H^1(U,T_{V \vert{U}}) \rightarrow H^2(V,T_V \otimes \mathcal{O}(-Y_{\vert{V}})).
	\end{align*}
	$V$ is a toric variety and by Construction \ref{construction_frob_split}
	\begin{align*}
		H^k(V,T_V \otimes \mathcal{O}(-Y_{\vert{V}})) &= H^k(V,\Omega_V^{n-1} \otimes \mathcal{O}(-Y_{\vert{V}} - K_{\mathbb{P} \vert{V}})) \\
		& \subset H^k(V, \Omega_V^{n-1} \otimes \mathcal{O}(-mY_{\vert{V}} - mK_{\mathbb{P} \vert{V}}))
	\end{align*}
	where $mY, mK_{\mathbb{P}}$ are Cartier. We show that these terms vanish for $k=1,2$ and the result follows since
	\[ H^1(V,T_V) \cong \Ext_{\mathcal{O}_{\mathbb{P}}}^1(\Omega_{\mathbb{P}}^1, \mathcal{O}_{\mathbb{P}})  \]
	as in section \ref{section_Kod_Spencer_maps}. Use a local cohomology sequence
	\[ H^k(\mathbb{P}, \mathcal{F}) \rightarrow H^k(V, \mathcal{F}_{\vert{V}}) \rightarrow H_Z^{k+1}(\mathbb{P}, \mathcal{F})  \]
	where
	\[ \mathcal{F}:= \Omega_{\mathbb{P}}^{n-1} \otimes \mathcal{O}(-mY - mK_{\mathbb{P}}). \]
	$\mathcal{F}$ is CM by Lemma \ref{lemma_normal_sheaf_Y_P_CM}. As in the Lemma above
	\[ H^k(\mathbb{P}, \mathcal{F}) \cong H^{n-k}(\mathbb{P}, \Omega_{\mathbb{P}}^1 \otimes \mathcal{O}(mY+mK_{\mathbb{P}})) = 0 \quad k=1,2.   \]
	Further by Theorem \ref{theorem_Grothendieck_van_local_coh}
	\[ H_Z^k(\mathbb{P}, \mathcal{F}) = 0 \quad k<n-1.  \]
\end{proof}

\begin{example}
	\normalfont
	If $\mathbb{P} = \mathbb{P}^3$, $\Delta = 4 \cdot \Delta_3$ then $\dim \, Im(\kappa) = 19$ and $H^1(\mathbb{P},T_{\mathbb{P}}) = 0$. But $h^1(Y,T_Y) = 20$ since $Y$ is a K3 surface. The above Corollary does not apply since $n=3$. 
\end{example}

\section{An explicit basis for $\ker(\kappa_{\mathbb{P},f})$} \label{section_roots_and_column_vectors}

Let
\begin{align*} 
R(N, \Sigma) := \{ &\alpha \in M \, | \, \langle \alpha, n(\alpha) \rangle = 1 \textrm{ for some } n(\alpha) \in \Sigma[1] \\
& \textrm{ and } \langle \alpha, n_{j} \rangle \leq 0 \textrm{ for } n_j \in \Sigma[1]\setminus \{n(\alpha) \}  \}
\end{align*}
denote the \textit{roots} of $\Sigma$. Likewise we define $R(N,\Sigma_{C(\Delta)})$ and $R(N,\Sigma_{\Delta})$ by replacing $\Sigma$ by $\Sigma_{C(\Delta)}$ and $\Sigma_{\Delta}$. \\ \\
There are inclusions
\begin{align} \label{inclusions_rays_support_vectors_convex_span}
	\Sigma_{C(\Delta)}[1] \underbrace{\subset}_{(\textrm{since } \Sigma \textrm{ refines } \Sigma_{C(\Delta)})} \Sigma[1] \underbrace{\subset}_{(\textrm{Figure } \ref{figure_support_of_Fine_interior})} \Convhull(\Sigma_{\Delta}[1]).
\end{align}

\begin{lemma} \label{lemma_compare_roots_Sigma_CDelta}
	Let $\Delta$ be an $n$-dimensional lattice polytope with $F(\Delta) \neq \emptyset$. Then
	\[ R(N,\Sigma_{\Delta}) \subset R(N,\Sigma) = R(N,\Sigma_{C(\Delta)}).  \]
\end{lemma}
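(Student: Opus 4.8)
The plan is to reduce the statement to the combinatorics of the three relevant ray sets, linked by the inclusions (\ref{inclusions_rays_support_vectors_convex_span}). Since the definition of $R(N,\cdot)$ refers only to the set of rays, we have $R(N,\Sigma)=R(S_F(\Delta))$, $R(N,\Sigma_{C(\Delta)})=R(\Sigma_{C(\Delta)}[1])$ and $R(N,\Sigma_\Delta)=R(\Sigma_\Delta[1])$, where for a finite $S\subset N\setminus\{0\}$ I write $R(S)$ for the corresponding set of Demazure roots (same formula, with $S$ in place of $\Sigma[1]$). First I would record two elementary facts used throughout: every $\nu\in S_F(\Delta)$ is a convex combination of $\Sigma_\Delta[1]$ (\cite[Prop.3.11]{Bat22}, cf. Figure \ref{figure_support_of_Fine_interior}); and $\Min_\Delta(\nu)=\Min_{C(\Delta)}(\nu)$ for $\nu\in S_F(\Delta)$, which is immediate from $\Delta\subseteq C(\Delta)\subseteq\{x:\langle x,\nu\rangle\geq\Min_\Delta(\nu)\}$.

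The inclusion $R(N,\Sigma)\subseteq R(N,\Sigma_{C(\Delta)})$ is the easy one. Let $\alpha$ be a root of $\Sigma$ with source $n_0\in S_F(\Delta)$. Because $\Sigma_{C(\Delta)}$ is complete, $n_0=\sum_i c_i m_i$ with $c_i\geq 0$ and $m_i\in\Sigma_{C(\Delta)}[1]\subseteq S_F(\Delta)$; if $n_0$ were not itself such a ray, then all $m_i\neq n_0$, so $1=\langle\alpha,n_0\rangle=\sum_i c_i\langle\alpha,m_i\rangle\leq 0$, a contradiction. Hence $n_0\in\Sigma_{C(\Delta)}[1]$, and since $\langle\alpha,\cdot\rangle\leq 0$ on $\Sigma_{C(\Delta)}[1]\setminus\{n_0\}\subseteq S_F(\Delta)\setminus\{n_0\}$ we conclude $\alpha\in R(N,\Sigma_{C(\Delta)})$.

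For the remaining inclusions $R(N,\Sigma_{C(\Delta)})\subseteq R(N,\Sigma)$ and $R(N,\Sigma_\Delta)\subseteq R(N,\Sigma)$, take a root $\alpha$ of the coarser fan with source $n_0$; I must (i) place $n_0$ in $S_F(\Delta)=\Sigma[1]$ and (ii) show $\langle\alpha,\nu\rangle\leq 0$ for all $\nu\in S_F(\Delta)\setminus\{n_0\}$. The main technical input is the quantitative lemma: if $\nu\in S_F(\Delta)$ lies in the relative interior of a cone $\sigma\in\Sigma_{C(\Delta)}$ (necessarily $\dim\sigma\geq 2$ when $\nu\notin\Sigma_{C(\Delta)}[1]$), and $\nu=\sum_i c_i n_i$ is a conic representation over the ray generators $n_i$ of $\sigma$, then $\sum_i c_i\leq 1$. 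To prove it I would pick $p\in F(\Delta)$ with $\langle p,\nu\rangle=\Min_\Delta(\nu)+1$; using that $\Min_{C(\Delta)}$ is linear on $\sigma$ together with the second recorded fact (applied to $\nu$ and to each $n_i\in\Sigma_{C(\Delta)}[1]\subseteq S_F(\Delta)$) gives $\Min_\Delta(\nu)=\sum_i c_i\Min_\Delta(n_i)$, and feeding in the membership inequalities $\langle p,n_i\rangle\geq\Min_\Delta(n_i)+1$ yields $\Min_\Delta(\nu)+1=\langle p,\nu\rangle\geq\Min_\Delta(\nu)+\sum_i c_i$. The analogue for $\Sigma_\Delta$ is just the first recorded fact ($\sum_i c_i=1$). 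Granting this, (ii) follows: writing $\nu$ as such a combination over $\Sigma_{C(\Delta)}[1]$, resp. $\Sigma_\Delta[1]$, one gets $\langle\alpha,\nu\rangle\leq(\text{coefficient of }n_0)$, which is strictly less than $1$ because either a second ray occurs with positive coefficient ($\dim\sigma\geq 2$ in the $\Sigma_{C(\Delta)}$ case) or because a coefficient $1$ would force $\nu=n_0$ (in the $\Sigma_\Delta$ case); since $\langle\alpha,\nu\rangle\in\mathbb{Z}$ this forces $\langle\alpha,\nu\rangle\leq 0$.

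It remains to check (i). In the $\Sigma_{C(\Delta)}$ case $n_0\in\Sigma_{C(\Delta)}[1]\subseteq S_F(\Delta)$ already. In the $\Sigma_\Delta$ case I would argue by contradiction: if $n_0\notin S_F(\Delta)$, let $p^*\in F(\Delta)$ minimise $\langle\cdot,n_0\rangle$; since $\langle\alpha,n_0\rangle=1$, moving from $p^*$ along $-\alpha$ strictly decreases $\langle\cdot,n_0\rangle$, hence leaves $F(\Delta)$, so some facet normal $\mu$ of $F(\Delta)$ whose facet contains $p^*$ has $\langle\alpha,\mu\rangle>0$; a facet normal of $F(\Delta)$ automatically lies in $S_F(\Delta)$ (its defining inequality $\langle\cdot,\mu\rangle\geq\Min_\Delta(\mu)+1$ is tight on that facet), so by (ii) necessarily $\mu=n_0$, giving $\langle p^*,n_0\rangle=\Min_\Delta(n_0)+1$ and so $n_0\in S_F(\Delta)$ after all. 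This yields both inclusions, and combined with the second paragraph it gives $R(N,\Sigma)=R(N,\Sigma_{C(\Delta)})$. I expect the quantitative lemma — and with it the correct handling of the cone of $\Sigma_{C(\Delta)}$ in whose relative interior a given support vector sits, using that $\Sigma$ refines $\Sigma_{C(\Delta)}$ (\cite[Thm.8.2]{Bat22}) and working with relative interiors rather than bases since $\Sigma_{C(\Delta)}$ need not be simplicial — to be the only genuinely delicate point; the rest is bookkeeping with the root inequalities and with the integrality of $\langle\alpha,\nu\rangle$.
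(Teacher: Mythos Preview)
Your proof is correct and follows essentially the same strategy as the paper: both reduce everything to the ray sets linked by (\ref{inclusions_rays_support_vectors_convex_span}) and then show that a root $\alpha$ of the coarser collection pairs to an integer $<1$, hence $\leq 0$, with every ray of the finer collection other than its source. The difference is in how the coefficient bound is obtained. The paper simply invokes the convex-hull inclusion $\Sigma[1]\subset\Convhull(\Sigma_\Delta[1])$ from (\ref{inclusions_rays_support_vectors_convex_span}) (i.e.\ \cite[Prop.~3.11]{Bat22}), which gives $\sum c_i=1$ directly and makes the argument a two-line computation. You instead prove your ``quantitative lemma'' $\sum_i c_i\leq 1$ from scratch using the defining property of $S_F(\Delta)$ and linearity of $\Min_{C(\Delta)}$ on cones of $\Sigma_{C(\Delta)}$. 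This is a clean self-contained argument---in effect you are re-deriving the relevant instance of \cite[Prop.~3.11]{Bat22} for $C(\Delta)$ rather than citing it---and it makes transparent why the support-vector condition is exactly what forces the coefficients to be small.

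One simplification worth noting: your separate verification of (i) in the $\Sigma_\Delta$ case, via minimising $\langle\cdot,n_0\rangle$ on $F(\Delta)$ and chasing a facet normal, is correct but unnecessary. Once (ii) is in hand, (i) follows immediately from completeness of $\Sigma$: if $n_0\notin\Sigma[1]$ then $\langle\alpha,\nu\rangle\leq 0$ for every $\nu\in\Sigma[1]$, forcing $\alpha=0$. This is exactly the trick you already used in your second paragraph for the reverse inclusion, and it is how the paper handles both directions symmetrically.
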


\begin{proof}
	To the second equality: Let $\alpha \in R(N,\Sigma_{C(\Delta)})$, that is
	\[ \langle \alpha, n(\alpha) \rangle =1, \quad \langle \alpha, n_j \rangle \leq 0 \quad \textrm{ for } n_j \in \Sigma_{C(\Delta)}[1] \setminus \{ n(\alpha) \}. \]
	$\Rightarrow \langle \alpha, n_j \rangle \leq 0$ for $n_j \in \Sigma[1] \setminus \{ n(\alpha) \}$, that is $\alpha \in R(N, \Sigma)$. Conversely assume $\alpha \in R(N,\Sigma)$. If $n_i \notin \Sigma_{C(\Delta)}[1]$ then $\alpha$ would have scalar product $\leq 0$ with all vectors in $\Sigma_{C(\Delta)}[1]$ and thus would be zero since $\Sigma$ refines $\Sigma_{C(\Delta)}$, a contradiction. The first inclusion follows similarly by using (\ref{inclusions_rays_support_vectors_convex_span}).
\end{proof}

We ask for a basis of Laurent polynomials for
\begin{align*}
	\Lie \, \Aut(\mathbb{P}) \subset L(C(\Delta))/ \mathbb{C} \cdot f.
\end{align*}
Remember the results from (\cite{BG99}): Given $f \in B$ there is a map
\begin{align*}
	& \phi_f: T \rightarrow B \\
	& (t_1,t_2,t_3) \mapsto \Big( (x_1,x_2,x_3) \mapsto f(t_1 x_1,t_2 x_2,t_3 x_3) \Big).
\end{align*}
By differentiating $\phi_f$ we get an injective homomorphism $(d \phi_f)_e: \Lie(T) \rightarrow T_{B,f}$ where $e=(1,1,1)$ with
\[ Im(d (\phi_f)_e) = \Big\langle x_1 \cdot \frac{\partial f}{\partial x_1},..., x_3 \cdot \frac{\partial f}{\partial x_3} \Big\rangle.  \]
For $m \in M \cap C(\Delta)$ and $\alpha \in R(N,\Sigma_{C(\Delta)})$ define 
\begin{align}\label{definition_height_function}
	ht_{-\alpha}(m) := \max\{k \in \mathbb{N}_{\geq 0}| \, m - k \cdot \alpha \in C(\Delta)  \}.
\end{align}
Given $\alpha \in R(N,\Sigma_{C(\Delta)})$ we denote by $\Gamma_{-\alpha} \leq C(\Delta)$ the facet to which $n(\alpha)$ is normal.

\begin{remark}
	\normalfont
	Assuming
	\[  \Gamma_{-\alpha} = \{ x \in M_{\mathbb{R}}| \, \langle x,n_{\Gamma} \rangle = b_{\Gamma}  \} \cap C(\Delta) \]
	and $m \in M \cap C(\Delta)$ then
	\begin{align} \label{formula_height_and_scalar_product}
		ht_{-\alpha}(m) =   \langle m,n_{\Gamma} \rangle - b_{\Gamma}.
	\end{align}
\end{remark}

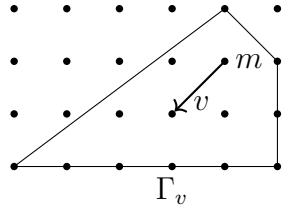
\begin{figure}[H]
	
	\begin{center}
		
		\begin{tikzpicture}[scale=0.7,level/.style={very thick}]

		\fill (-1,1) circle (2pt);
		\fill (-1,2) circle (2pt);	
		\fill (-1,3) circle (2pt);
		\fill (-1,4) circle (2pt);
		\fill (0,1) circle (2pt);
		\fill (0,2) circle (2pt);	
		\fill (0,3) circle (2pt);
		\fill (0,4) circle (2pt);
		\fill (1,1) circle (2pt);
		\fill (1,2) circle (2pt);	
		\fill (1,3) circle (2pt);
		\fill (1,4) circle (2pt);	
		\fill (2,1) circle (2pt);
		\fill (2,2) circle (2pt);	
		\fill (2,3) circle (2pt);
		\fill (2,4) circle (2pt);	
		\fill (3,1) circle (2pt);
		\fill (3,2) circle (2pt);	
		\fill (3,3) circle (2pt);
		\fill (3,4) circle (2pt);	
		\fill (4,1) circle (2pt);
		\fill (4,2) circle (2pt);	
		\fill (4,3) circle (2pt);
		\fill (4,4) circle (2pt);
		
		\draw (-1,1) -- (3,4);
		\draw (3,4) -- (4,3);
		\draw (4,3)  -- (4,1);
		\draw (4,1) -- (-1,1);
		
		\draw[->,thick] (3,3) -- (2.05,2.05);
		\node[below] at (2,1) {$\Gamma_v$}; 
		\node[right] at (2.2,2.2) {$v$};
		\node[right] at (3,3) {$m$};
		
		\end{tikzpicture}
		
	\end{center}
	
	\caption{A lattice polytope with a column vector $v$ and $ht_{v}(m) = 2$.} \label{figure_2_dim_pol_number_of_mod}
\end{figure}

Let $S_{C(\Delta)}$ denote the graded semigroup $\mathbb{C}$-algebra over
\[ \Cone(C(\Delta) \times \{1\}) \cap (M \times \mathbb{Z})  \]
Then the function $ht_{-\alpha}$ continues to a map $S_{C(\Delta)} \rightarrow S_{C(\Delta)}$ which respects the grading on $S_{C(\Delta)}$. For $\lambda \in \mathbb{C}$ define a graded automorphism $e_{-\alpha}^{\lambda}: S_{C(\Delta)} \rightarrow S_{C(\Delta)}$ by
\[  e_{-\alpha}^{\lambda}(x^m) := x^m \cdot (1 + \lambda x^{-\alpha})^{ht_{-\alpha}(m)}  \]

\begin{corollar} \label{lemma_basis_lie_algebra_Aut_C(Delta)} (\cite[Lemma 3.1, Thm.3.2b), Thm.5.4]{BG99}) \\
	$\Lie \, \Aut(\mathbb{P})$ has a basis of derivations, which act on $L(C(\Delta))$ as follows
	\begin{align*}
	&  x_i \frac{\partial }{\partial x_i}: \quad x^m \mapsto m_i \cdot x^m, \quad i=1,...,n, \\
	& \frac{\partial e_{-\alpha}^{\lambda}}{\partial \lambda}_{\vert{\lambda = 0}}: \quad   x^m \mapsto ht_{-\alpha}(m) \cdot x^{m-\alpha}, \quad \alpha \in R(N,\Sigma_{C(\Delta)}).
	\end{align*}
\end{corollar}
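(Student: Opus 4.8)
The plan is to derive the basis from the structure theory of automorphism groups of complete toric varieties (Demazure, Cox) together with the explicit polytopal description of \cite{BG99}, the only genuinely new work being the bookkeeping that matches the combinatorics attached to $\mathbb{P}$ with that attached to $C(\Delta)$. First I would use Remark \ref{remark_Kaehler_diff_forms_Aut_group} to replace $\Lie\,\Aut(\mathbb{P})$ by $H^0(\mathbb{P},T_{\mathbb{P}})$ and then invoke the $M$-weight decomposition of global vector fields on a complete simplicial toric variety: the weight-$0$ summand is $\Lie(T) = \langle x_i\,\partial/\partial x_i \mid i=1,\dots,n\rangle$, and every other summand is one-dimensional and occurs precisely in the weights $-\alpha$, $\alpha\in R(N,\Sigma)$ (Demazure; \cite[\S 8.1]{CLS11}). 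By Lemma \ref{lemma_compare_roots_Sigma_CDelta} we have $R(N,\Sigma) = R(N,\Sigma_{C(\Delta)})$, so this is exactly the index set over which $ht_{-\alpha}$ and $e^{\lambda}_{-\alpha}$ were defined; write $\delta_{\alpha}$ for a generator of the weight-$(-\alpha)$ summand, normalized as the velocity at $\lambda=0$ of the associated root one-parameter subgroup.

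Next I would let $H^0(\mathbb{P},T_{\mathbb{P}})$ act on $L(C(\Delta))$. Since $\Aut(\mathbb{P})^0$ fixes the class $[Y]\in\Cl(\mathbb{P})$ it acts on $\mathcal{O}_{\mathbb{P}}(Y)$ (up to a finite cover, which is harmless on Lie algebras), hence $H^0(\mathbb{P},T_{\mathbb{P}})$ acts by Lie derivative on $H^0(\mathbb{P},\mathcal{O}_{\mathbb{P}}(Y)) \cong L(C(\Delta))$ (Remark \ref{remark_complete_lin_system_min_model_can_closure}). On the torus part the action is forced by the weight grading: $x^m$ has weight $m$, so $x_i\,\partial/\partial x_i$ acts by $x^m\mapsto m_i x^m$. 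For the root part I would pass to the full section ring and use that
\[ \bigoplus_{k\ge 0} H^0(\mathbb{P},\mathcal{O}_{\mathbb{P}}(kY)) \;\cong\; \bigoplus_{k\ge 0} L(kC(\Delta)) \;=\; S_{C(\Delta)}, \]
which holds because $\Sigma[1] = S_F(\Delta)$ are exactly the inequalities cutting out $C(\Delta)$ and $\Min_{C(\Delta)}$ is linear on the cones of $\Sigma$ (as in the proof of Corollary \ref{proposition_criterion_Y_Cartier_divisor}). Thus $\Aut(\mathbb{P})^0$ maps to the graded automorphism group of $S_{C(\Delta)}$, and \cite[Lemma 3.1]{BG99} identifies the root subgroup flows with the automorphisms $e^{\lambda}_{-\alpha}$; differentiating at $\lambda=0$ and using $ht_{-\alpha}(m) = \langle m, n_{\Gamma}\rangle - b_{\Gamma}$ from formula (\ref{formula_height_and_scalar_product}) yields $\delta_{\alpha}\colon x^m\mapsto ht_{-\alpha}(m)\,x^{m-\alpha}$. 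That these derivations actually form a \emph{basis}, and not merely a generating set, is \cite[Thm.3.2b), Thm.5.4]{BG99}.

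The step I expect to be the main obstacle is precisely this section-ring identification and its $\Aut(\mathbb{P})^0$-equivariance, since $\mathbb{P}$ is the toric variety of $\Sigma$, whose rays $S_F(\Delta)$ generally differ from the rays of the normal fan $\Sigma_{C(\Delta)}$ underlying \cite{BG99}, so the automorphisms must be transported between the two combinatorial models. I would resolve this by observing that $\Aut(\mathbb{P})^0$ embeds into $\Aut_{\mathrm{gr}}(S_{C(\Delta)})^0$ (injectivity follows from $\mathcal{O}_{\mathbb{P}}(Y)$ being big, Proposition \ref{proposition_hyp_nef_and_big}) with image of codimension one, the missing direction being the grading $\mathbb{G}_m$, which acts trivially on $\mathbb{P}$ and contributes only the Euler derivation, acting as the identity in degree one. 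Combined with the equality of roots from Lemma \ref{lemma_compare_roots_Sigma_CDelta} and a dimension count ($\dim\Aut(\mathbb{P})^0 = n + \#R(N,\Sigma)$), this forces the degree-one action to be exactly the one computed in \cite{BG99}. The remaining ingredients — the weight decomposition of $H^0(\mathbb{P},T_{\mathbb{P}})$, existence of the linearization up to isogeny, and linearity of $\Min_{C(\Delta)}$ on $\Sigma$ — are standard or already available in the excerpt.
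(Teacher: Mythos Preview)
Your argument is correct, and in fact more complete than what the paper provides: the paper gives no proof at all, ending the statement with a bare \qed\ and treating it as a direct citation of \cite{BG99}. The setup preceding the corollary (the definition of $ht_{-\alpha}$, of $e_{-\alpha}^{\lambda}$, and the observation that $ht_{-\alpha}$ extends to a graded map on $S_{C(\Delta)}$) is all the paper offers by way of justification.

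You have correctly identified and addressed a point the paper leaves implicit: \cite{BG99} computes the graded automorphism group of the polytopal algebra $S_{C(\Delta)}$, which is naturally attached to $\mathbb{P}_{C(\Delta)}$ rather than to $\mathbb{P}$, and these two toric varieties differ in general since $\Sigma$ only refines $\Sigma_{C(\Delta)}$. Your bridge --- the injection $\Aut(\mathbb{P})^0 \hookrightarrow \Aut_{\mathrm{gr}}(S_{C(\Delta)})^0$ coming from the action on the section ring of the big and nef class $[Y]$, combined with the dimension count $n + \#R(N,\Sigma) = n + \#R(N,\Sigma_{C(\Delta)})$ from Lemma~\ref{lemma_compare_roots_Sigma_CDelta} --- is exactly what is needed to make the citation honest, and the paper presumably intends Lemma~\ref{lemma_compare_roots_Sigma_CDelta} (which immediately precedes the relevant discussion) to carry this weight silently. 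So your proposal is not a different route so much as a spelled-out version of what the paper asserts by fiat.
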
	
\qed

By definition of the tangent sheaf sequence the homomorphism
\[ j: H^0(\mathbb{P}, T_{\mathbb{P}}) \cong H^0(Y, T_{\mathbb{P} \vert{Y}}^{**}) \rightarrow H^0(Y,N_{Y/\mathbb{P}}) \]
is given by applying the derivations from Corollary \ref{lemma_basis_lie_algebra_Aut_C(Delta)} to 
\[ f = \sum\limits_{m \in \Delta \cap M} a_m x^m \in U_{reg}(\Delta) \] 
and restricting to $Y = Y_f$.

\begin{corollar} \label{theorem_basis_ker_kappa_P_subset_LCDelta}
	Given the conditions of Theorem \ref{theorem_number_of_moduli} $\ker(\kappa_{\mathbb{P},f})$ has the basis
	\begin{align*}
	& \quad \quad \quad \quad \quad \quad x_1 \cdot \frac{\partial f}{\partial x_1},..., x_n \cdot \frac{\partial f}{\partial x_n}, \\
	& w_{-\alpha}(f) := \sum\limits_{m \in \Delta \cap M} ht_{-\alpha}(m) \cdot a_m \cdot x^{m-\alpha}, \quad \alpha \in  R(N,\Sigma_{C(\Delta)}).
	\end{align*}
\end{corollar}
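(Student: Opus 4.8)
The plan is to combine Theorem \ref{theorem_number_of_moduli}, which already identifies $\ker(\kappa_{\mathbb{P},f})$ with $\Lie\,\Aut(\mathbb{P})$ via the exact sequence
\[
0\to H^0(Y,T_Y)\to H^0(Y,T_{\mathbb{P}\vert Y}^{**})\to H^0(Y,N_{Y/\mathbb{P}})\xrightarrow{\kappa_{\mathbb{P},f}}\Ext^1_{\mathcal{O}_Y}(\Omega_Y^1,\mathcal{O}_Y),
\]
with the explicit basis of $\Lie\,\Aut(\mathbb{P})$ supplied by Corollary \ref{lemma_basis_lie_algebra_Aut_C(Delta)}. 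Concretely, under $H^0(Y,N_{Y/\mathbb{P}})\cong L(C(\Delta))/\mathbb{C}\cdot f$ from Corollary \ref{remark_normal_sheaf}, the kernel of $\kappa_{\mathbb{P},f}$ is exactly the image of $j\colon H^0(\mathbb{P},T_{\mathbb{P}})\to H^0(Y,N_{Y/\mathbb{P}})$, i.e. the image of $\Lie\,\Aut(\mathbb{P})$ under "apply the derivation to $f$ and reduce mod $f$''. So the whole content of the corollary is to compute that image on a basis.

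First I would recall that by Corollary \ref{lemma_basis_lie_algebra_Aut_C(Delta)} a basis of $\Lie\,\Aut(\mathbb{P})$ consists of the $n$ torus derivations $x_i\partial/\partial x_i$ and one derivation $\partial e_{-\alpha}^\lambda/\partial\lambda|_{\lambda=0}$ for each root $\alpha\in R(N,\Sigma_{C(\Delta)})$ (which by Lemma \ref{lemma_compare_roots_Sigma_CDelta} equals $R(N,\Sigma)$). Applying $x_i\partial/\partial x_i$ to $f=\sum_{m\in\Delta\cap M}a_m x^m$ gives $\sum_m m_i a_m x^m = x_i\cdot\partial f/\partial x_i$, which lands in $L(\Delta)\subset L(C(\Delta))$. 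Applying the root derivation gives, by the formula in Corollary \ref{lemma_basis_lie_algebra_Aut_C(Delta)}, $\sum_{m\in\Delta\cap M} ht_{-\alpha}(m)\,a_m\,x^{m-\alpha}$, which is precisely $w_{-\alpha}(f)$; note that whenever $ht_{-\alpha}(m)>0$ the point $m-\alpha$ still lies in $C(\Delta)$ by definition of $ht_{-\alpha}$, so $w_{-\alpha}(f)\in L(C(\Delta))$ is well-defined. Then I would argue that the images of these basis elements remain linearly independent in the quotient $L(C(\Delta))/\mathbb{C}\cdot f$: since $j$ is injective (from the exact sequence above, using $H^0(Y,T_Y)=0$ and $H^0(Y,T_{\mathbb{P}\vert Y}^{**})\cong H^0(\mathbb{P},T_{\mathbb{P}})$ as established in the proof of Theorem \ref{theorem_number_of_moduli}) and $\Lie\,\Aut(\mathbb{P})=\Lie(T)\oplus\bigoplus_\alpha\mathbb{C}$ by \cite{BG99}, the image is a subspace of the correct dimension spanned exactly by these vectors. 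Hence they form a basis of $\ker(\kappa_{\mathbb{P},f})$.

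The main obstacle I expect is bookkeeping rather than conceptual: one must be careful that $j$ is genuinely given by "differentiate $f$ and restrict'', i.e. that the identification $H^0(Y,T_{\mathbb{P}\vert Y}^{**})\cong H^0(\mathbb{P},T_{\mathbb{P}})$ is compatible with the action of global vector fields on the defining section $f$ of $\mathcal{O}_{\mathbb{P}}(Y)$, so that the coboundary map in the normal sheaf sequence \eqref{normal_sheaf_defining_exact_sequence} literally sends a vector field $\xi$ to $\xi(f)\bmod f$ in $L(C(\Delta))/\mathbb{C}\cdot f$; this is the statement "$j$ is given by applying the derivations from Corollary \ref{lemma_basis_lie_algebra_Aut_C(Delta)} to $f$ and restricting to $Y$'' that was recorded just before the corollary, so I would simply invoke it. A secondary point requiring a line of care is linear independence modulo $\mathbb{C}\cdot f$: the only element of $\Lie\,\Aut(\mathbb{P})$ that could map into $\mathbb{C}\cdot f$ is a multiple of the Euler-type field, but $f$ is not a $\mathbb{C}$-combination of the $x_i\partial f/\partial x_i$ unless $\Delta$ degenerates; since $j$ is injective this cannot happen, so no separate argument is needed. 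With these two observations in place the corollary follows directly by transporting the basis of Corollary \ref{lemma_basis_lie_algebra_Aut_C(Delta)} through the isomorphism $\ker(\kappa_{\mathbb{P},f})\cong\Lie\,\Aut(\mathbb{P})$.
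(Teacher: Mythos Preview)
Your proposal is correct and follows exactly the approach the paper takes: the paper states this corollary without proof, treating it as an immediate consequence of Theorem \ref{theorem_number_of_moduli}, the explicit basis of $\Lie\,\Aut(\mathbb{P})$ from Corollary \ref{lemma_basis_lie_algebra_Aut_C(Delta)}, and the description of $j$ as ``apply the derivation to $f$ and restrict to $Y$'' recorded just before the statement. Your additional remarks on why $j$ is injective into $L(C(\Delta))/\mathbb{C}\cdot f$ (via $H^0(Y,T_Y)=0$) make explicit the linear-independence step that the paper leaves implicit.
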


\begin{figure}[H]
	
	\begin{center}
		
		\begin{tikzpicture}[scale=0.7,level/.style={very thick}]
			
			\begin{scope}[xshift=-7cm]
				
				\draw (-1,1) circle(2pt);
				\draw (-1,2) circle(2pt);
				\draw (-1,3) circle (2pt);
				\draw (-1,4) circle (2pt);
				\draw (0,1) circle(2pt);
				\fill (0,2) circle (2pt);	
				\fill (0,3) circle (2pt);
				\fill (0,4) circle (2pt);
				\draw (1,1) circle(2pt);
				\fill (1,2) circle (2pt);	
				\fill (1,3) circle (2pt);
				\draw (1,4) circle (2pt);
				\draw (2,1) circle(2pt);
				\fill (2,2) circle (2pt);	
				\draw (2,3) circle (2pt);
				\draw (2,4) circle (2pt);
				\draw (3,1) circle(2pt);
				\draw (3,2) circle (2pt);	
				\draw (3,3) circle (2pt);
				\draw (3,4) circle (2pt);	
				
				\node[right] at (0,2) {\tiny $1$};
				\node[right] at (1,2) {\tiny $1$};
				\node[right] at (2,2) {\tiny $1$};
				\node[right] at (0,3) {\tiny $2$};
				\node[right] at (1,3) {\tiny $2$};
				\node[right] at (0,4) {\tiny $3$};

				\draw[dashed] (1/2-1,1/2+1) -- (9/2-1/4-1,1/4+1/4+1);
				\draw[dashed] (1/2-1,1/2+1) -- (1-1,3.5+1);
				\draw[dashed] (1-1,3.5+1) -- (9/2-1/4-1,1/4+1/4+1);

				\draw (-1,1) -- (3,1);
				\draw (3,1) -- (0,4);
				\draw (0,4)  -- (-1,1);
				
				\draw[->] (0,3) -- (0.95,2.05);
				\node[below] at (0.2,0.8) {$\Gamma_{-\alpha}$}; 
				\node[right] at (0.3,2.6) {$-\alpha$};
			\end{scope}
			
			\draw[->] (-3,3) -- (-2,3);

			\draw (-1,1) circle(2pt);
			\draw (-1,2) circle(2pt);
			\draw (-1,3) circle (2pt);
			\draw (-1,4) circle (2pt);
			\draw (0,1) circle(2pt);
			\draw (0,2) circle (2pt);	
			\draw (0,3) circle (2pt);
			\draw (0,4) circle (2pt);
			\fill (1,1) circle(2pt);
			\fill (1,2) circle (2pt);	
			\fill (1,3) circle (2pt);
			\draw (1,4) circle (2pt);
			\fill (2,1) circle(2pt);
			\fill (2,2) circle (2pt);	
			\draw (2,3) circle (2pt);
			\draw (2,4) circle (2pt);
			\fill (3,1) circle(2pt);
			\draw (3,2) circle (2pt);	
			\draw (3,3) circle (2pt);
			\draw (3,4) circle (2pt);	
			
			\draw[dashed] (1/2,1/2) to (9/2-1/4,1/4+1/4);
			\draw[dashed] (1/2,1/2) -- (1,3.5);
			\draw[dashed] (1,3.5) -- (9/2-1/4,1/4+1/4);

			\draw (-1,1) -- (3,1);
			\draw (3,1) -- (0,4);
			\draw (0,4)  -- (-1,1);
			
			
		\end{tikzpicture}
		
	\end{center}
	\caption{On the left: The column vector $-\alpha$ with facet $\Gamma_{-\alpha}$ and all lattice points $m \in C(\Delta)$ with $ht_{-\alpha}(m) > 0$. On the right: $w_{-\alpha}(f)$ has support on the thick lattice points.} \label{figure_support_of_w_-alpha}
\end{figure}
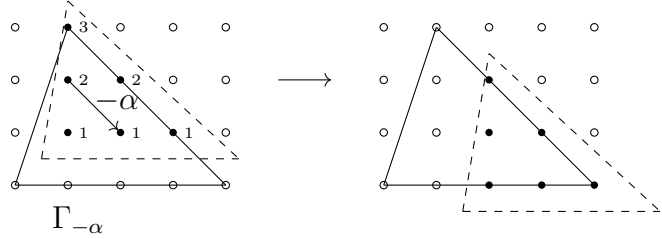

\begin{example} \label{example_proj_hyp_Griffiths_jac_ideal_ker_Kod_Spenc}
	\normalfont
	If 
	\[ \Delta = d \cdot \Delta_n, \quad f \in U_{reg}(\Delta),  \]
	then 
	\[ C(\Delta) = \Delta, \quad \Sigma = \Sigma_{\Delta}, \quad \mathbb{P} = \mathbb{P}^n  \]
	and $Y_f$ is a smooth degree $d$ hypersurface in $\mathbb{P}^n$.	For such an hypersurface it is shown in (\cite[Lemma 6.15]{Voi03}) that
	\begin{align} \label{proj_hyp_degree_d_ker_kappa}
	\ker(\kappa_f) \cong J_{f,griff}^d,
	\end{align}
	if we work with the family $\mathcal{X} \rightarrow U_{reg}(\Delta)$ (if we projectivize then we have to mod out $f$ from the kernel). Here $J_{f,griff}^d$ denotes the $d$-th homogeneous component of \textit{Griffiths Jacobian ideal}
	\[ J_{f,griff} := (\frac{\partial f}{\partial x_0},...,\frac{\partial f}{\partial x_n}) \unlhd \mathbb{C}[x_0,...,x_n].  \]
	The roots of $\Sigma$ are given by 
	\[ \pm e_i, \quad i=1,...,n, \quad   \pm e_i \mp e_j, \quad i,j=1,...,n, \quad i \neq j  \]
	and if $d \geq n+1$ then Theorem \ref{theorem_basis_ker_kappa_P_subset_LCDelta} restricts to the result (\ref{proj_hyp_degree_d_ker_kappa}) up to homogenization.
\end{example}

\section{An explicit basis for $\ker(\kappa_{f})$} \label{section_number_of_mod_general_case}

\begin{theorem} \label{theorem_numb_mod_not_can_closed}
	Given the conditions of Theorem \ref{theorem_number_of_moduli} $\ker(\kappa_{f})$ has the basis
	\begin{align*}
	x_i \frac{\partial f}{\partial x_i}, \quad i=1,...,n, \quad w_{-\alpha}(f), \quad \alpha \in  R(N,\Sigma_{\Delta}).
	\end{align*}
\end{theorem}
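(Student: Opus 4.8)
The starting point is that $\kappa_f$ is literally the restriction of $\kappa_{\mathbb{P},f}$: under the identifications of Corollary~\ref{remark_normal_sheaf}, the inclusion $H^0(Y,N_{Y/\mathcal{X}})\hookrightarrow H^0(Y,N_{Y/\mathbb{P}})$ is the inclusion $L(\Delta)/\mathbb{C}\cdot f\hookrightarrow L(C(\Delta))/\mathbb{C}\cdot f$ of those classes having a representative supported on $\Delta\subseteq C(\Delta)$, and $\kappa_f=\kappa_{\mathbb{P},f}|_{L(\Delta)/\mathbb{C}\cdot f}$ (Section~\ref{section_number_of_mod_general_case}). Hence $\ker(\kappa_f)=\ker(\kappa_{\mathbb{P},f})\cap(L(\Delta)/\mathbb{C}\cdot f)$, and by Corollary~\ref{theorem_basis_ker_kappa_P_subset_LCDelta} this is the intersection of $L(\Delta)/\mathbb{C}\cdot f$ with $\langle x_1\partial f/\partial x_1,\dots,x_n\partial f/\partial x_n\rangle\oplus\bigoplus_{\alpha\in R(N,\Sigma_{C(\Delta)})}\mathbb{C}\,w_{-\alpha}(f)$. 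Since each $x_i\,\partial f/\partial x_i$ is supported on $\Delta$ and $R(N,\Sigma_{\Delta})\subseteq R(N,\Sigma_{C(\Delta)})$ by Lemma~\ref{lemma_compare_roots_Sigma_CDelta}, the theorem follows once we prove: (a) $w_{-\alpha}(f)$ is supported on $\Delta$ whenever $\alpha\in R(N,\Sigma_{\Delta})$; and (b) no nonzero $\mathbb{C}$-linear combination $g=\sum_{\alpha\in R(N,\Sigma_{C(\Delta)})\setminus R(N,\Sigma_{\Delta})}c_\alpha\,w_{-\alpha}(f)$ is supported on $\Delta$. Indeed (b) lets one strip the ``extra'' roots off any element of $\ker(\kappa_{\mathbb{P},f})$ that happens to be supported on $\Delta$, and linear independence of the remaining generators is inherited from Corollary~\ref{theorem_basis_ker_kappa_P_subset_LCDelta}.

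For (a), I would argue that for $\alpha\in R(N,\Sigma_{\Delta})$ with distinguished ray $n(\alpha)$, every monomial of $w_{-\alpha}(f)$ has the form $x^{m-\alpha}$ with $m\in\Delta\cap M$ and $ht_{-\alpha}(m)\ge 1$, and check the facet inequalities of $\Delta$ by hand: for a ray $\nu\in\Sigma_{\Delta}[1]$ with $\nu\ne n(\alpha)$ the root condition $\langle\alpha,\nu\rangle\le 0$ gives $\langle m-\alpha,\nu\rangle\ge\langle m,\nu\rangle\ge\Min_{\Delta}(\nu)$, while for $\nu=n(\alpha)$ one uses \eqref{formula_height_and_scalar_product} together with $\Min_{C(\Delta)}(n(\alpha))=\Min_{\Delta}(n(\alpha))$ to turn $ht_{-\alpha}(m)\ge 1$ into $\langle m-\alpha,n(\alpha)\rangle\ge\Min_{\Delta}(n(\alpha))$ (one checks $n(\alpha)\in S_F(\Delta)$ for roots of $\Sigma_\Delta$, which is consistent since $F(\Delta)\neq\emptyset$ forces the lattice width of $\Delta$ to be $\geq 2$ in every direction, in particular in the direction $n(\alpha)$). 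Equivalently, (a) can be read off from the $\Delta$-analogue of Corollary~\ref{lemma_basis_lie_algebra_Aut_C(Delta)}: the derivation $\frac{\partial e_{-\alpha}^{\lambda}}{\partial\lambda}\big|_{\lambda=0}$ restricts on $L(\Delta)\subseteq L(C(\Delta))$ to the corresponding derivation attached to the polytope $\Delta$, which preserves $L(\Delta)$.

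The heart of the argument is (b). First, an $\alpha\in R(N,\Sigma_{C(\Delta)})\setminus R(N,\Sigma_{\Delta})$ must violate some $\Sigma_\Delta$-root inequality: using $\Sigma_{C(\Delta)}[1]\subseteq\Sigma[1]=S_F(\Delta)\subseteq\Convhull(\Sigma_{\Delta}[1])$ from \eqref{inclusions_rays_support_vectors_convex_span} one produces a ray $\nu_\alpha\in\Sigma_{\Delta}[1]\setminus\{n(\alpha)\}$ with $\langle\alpha,\nu_\alpha\rangle\ge 1$. Then $w_{-\alpha}(f)$ contains a monomial $x^{m-\alpha}$ coming from a vertex $m$ of the facet $\Delta\cap\{\langle\cdot,\nu_\alpha\rangle=\Min_{\Delta}(\nu_\alpha)\}$ with $ht_{-\alpha}(m)\ge 1$ — here nondegeneracy of $f$ gives $a_m\ne 0$ — and this monomial satisfies $\langle m-\alpha,\nu_\alpha\rangle=\Min_{\Delta}(\nu_\alpha)-\langle\alpha,\nu_\alpha\rangle<\Min_{\Delta}(\nu_\alpha)$, so it lies strictly outside $\Delta$. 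To deduce $g=0$ I would peel the sum apart facet by facet: for a fixed ray $\nu\in\Sigma_{\Delta}[1]$ and an integer $k\ge 1$, the monomials of $g$ at $\nu$-level $\Min_{\Delta}(\nu)-k$ can only be contributed by those $w_{-\alpha}(f)$ with $\langle\alpha,\nu\rangle=k$, and only by lattice points of $\Delta$ on the $\nu$-facet; choosing $\nu$ and then $k$ as large as possible among the $\alpha$ with $c_\alpha\ne 0$ forces the corresponding sub-sum to vanish on that facet, and a descending induction on $k$ together with an exhaustion of the facets forces every $c_\alpha$ to vanish.

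The main obstacle is precisely this last bookkeeping in (b): one must rule out that the ``stick-out'' monomials produced by the various roots in $R(N,\Sigma_{C(\Delta)})\setminus R(N,\Sigma_{\Delta})$ cancel one another, which requires a careful description of the support of each $w_{-\alpha}(f)$ near each facet of $\Delta$ and some care about which lattice points carry nonzero coefficients $a_m$ (nondegeneracy only guarantees this at vertices, so the extreme monomials should be arranged to sit at vertices of the relevant faces). A secondary but genuine point, used in (a), is the compatibility of heights computed in $C(\Delta)$ and in $\Delta$. Once (b) is settled the rest of the argument is formal, given Corollary~\ref{theorem_basis_ker_kappa_P_subset_LCDelta} and the restriction property of $\kappa_f$.
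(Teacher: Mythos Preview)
Your proposal is correct and follows essentially the same route as the paper: reduce to $\ker(\kappa_f)=\ker(\kappa_{\mathbb{P},f})\cap L(\Delta)$ via Proposition~\ref{proposition_ker_kappa_f_intersection_with_Newton_pol}, then prove your (a) and (b), which are exactly the paper's ``first point'' and ``second $+$ third point'' respectively. Your argument for (a) is the unwinding of the paper's inclusion $\Gamma_{-\alpha}\subset\Gamma_{-\alpha}'$, and the ``compatibility of heights'' you flag is precisely the equality $\Min_{C(\Delta)}(n(\alpha))=\Min_{\Delta}(n(\alpha))$ the paper uses.

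The only noteworthy difference is the bookkeeping in (b). The paper organizes the linear–independence argument by the facet $\Gamma_{-\alpha}$ of $C(\Delta)$: for a fixed such facet the relevant roots form a lattice polytope $P$, and one peels off vertices of $P$ (each vertex $\alpha$ produces a monomial $x^{m-\alpha}$ not shared by any other $w_{-\alpha'}(f)$); a separate short argument then rules out relations between roots attached to \emph{different} facets of $C(\Delta)$. You instead organize by rays $\nu\in\Sigma_{\Delta}[1]$ and the $\nu$-level below $\Min_{\Delta}(\nu)$. This is a legitimate alternative, but note that your statement ``can only be contributed by those $w_{-\alpha}(f)$ with $\langle\alpha,\nu\rangle=k$'' should read ``$\langle\alpha,\nu\rangle\geq k$'' (choosing $k$ maximal fixes this), and at the extremal level several $\alpha$'s with the same $\langle\alpha,\nu\rangle$ can still contribute, so the ``exhaustion of the facets'' really is doing work. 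The paper's organization by $C(\Delta)$-facets makes the within-facet step slightly cleaner, since all roots to a given $\Gamma_{-\alpha}$ share the same distinguished ray; your organization by $\Delta$-facets is closer to the geometry of the obstruction (monomials sticking out of $\Delta$). Both approaches leave exactly the combinatorial cancellation issue you correctly name as the main obstacle.
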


We first prove the following Proposition which reduces the proof to a combinatorial argument

\begin{proposition} \label{proposition_ker_kappa_f_intersection_with_Newton_pol}
	Let $\Delta$ be an $n$-dimensional lattice polytope with $F(\Delta) \neq \emptyset$. Then $\kappa_f$ equals the restriction of $\kappa_{\mathbb{P},f}$ to $L(\Delta)/ \mathbb{C} \cdot f$ and thus
	\begin{align} 
	\ker(\kappa_{f}) \cong  \ker(\kappa_{\mathbb{P},f}) \cap L(\Delta).
	\end{align} 		
\end{proposition}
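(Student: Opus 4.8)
The plan is to compare the two Kodaira--Spencer maps $\kappa_f$ and $\kappa_{\mathbb{P},f}$ at the level of the defining exact sequences on the smooth toric locus $V\subset\mathbb{P}$ (with $U=V\cap Y$), using the fact that both are coboundary maps into $H^1(U,T_U)\cong\Ext^1_{\mathcal{O}_Y}(\Omega_Y^1,\mathcal{O}_Y)$. First I would recall from Remark \ref{remark_normal_sheaf_2} and Corollary \ref{remark_normal_sheaf} that $H^0(Y,N_{Y/\mathcal{X}})\cong L(\Delta)/\mathbb{C}\cdot f$ sits inside $H^0(Y,N_{Y/\mathbb{P}})\cong L(C(\Delta))/\mathbb{C}\cdot f$ via the inclusion $L(\Delta)\hookrightarrow L(C(\Delta))$ induced by $\Delta\subset C(\Delta)$; concretely a tangent vector to $B$ at $f$ is represented by a Laurent polynomial $g\in L(\Delta)$, i.e.\ $f+\epsilon g$, and the associated section of $N_{Y/\mathbb{P}}$ is the restriction of $g$ (as a section of $\mathcal{O}_{\mathbb{P}}(Y)$) to $Y$. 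Since the embedding $\mathcal{X}\hookrightarrow\mathbb{P}\times B$ is compatible with the ambient family $\mathbb{P}\times B\to B$, the first-order deformation of $Y_f$ cut out inside $\mathcal{X}\times_\phi D$ agrees with the one obtained by moving $Y_f$ inside $\mathbb{P}$ along the section $g|_Y$.

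Concretely, I would exhibit a commutative ladder relating the normal sheaf sequence $0\to\mathcal{O}_{\mathbb{P}}\to\mathcal{O}_{\mathbb{P}}(Y)\to N_{Y/\mathbb{P}}\to0$ of Proposition \ref{proposition_exact_normal_sheaf_sequence_defining}, restricted to $U\subset V$, with the analogous sequence governing the family $\mathcal{X}\to B$; both feed into the same tangent sheaf sequence for $U\subset V$ whose connecting map is $\kappa_{\mathbb{P},f}$ (resp.\ $\kappa_f$) by the defining diagram in Section \ref{section_Kod_Spencer_maps}. The point is that $N_{Y/\mathcal{X}}$ is the pullback of $N_{Y/\mathbb{P}}$ along the closed immersion $Y_f\hookrightarrow\mathcal{X}_{|\{f\}}$ combined with the trivialization coming from $H^0(Y,N_{Y/\mathcal{X}})$ being free of rank $l(\Delta)-1$, so on global sections the inclusion $H^0(Y,N_{Y/\mathcal{X}})\hookrightarrow H^0(Y,N_{Y/\mathbb{P}})$ is exactly $L(\Delta)/\mathbb{C}\cdot f\hookrightarrow L(C(\Delta))/\mathbb{C}\cdot f$, and the coboundary map does not see the difference between the two ambient situations because it is computed entirely from the tangent sheaf sequence of $U\subset V$, which is the same in both cases. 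Hence $\kappa_f=\kappa_{\mathbb{P},f}\circ(\text{inclusion})$, which gives $\ker(\kappa_f)\cong\ker(\kappa_{\mathbb{P},f})\cap L(\Delta)$ (with the usual convention about modding out $\mathbb{C}\cdot f$).

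The main obstacle I anticipate is making precise the identification "$H^0(Y,N_{Y/\mathcal{X}})\subset H^0(Y,N_{Y/\mathbb{P}})$ is induced by $L(\Delta)\subset L(C(\Delta))$" in a way that is manifestly compatible with the two coboundary maps --- that is, checking that the square relating the two first-order deformations (one inside $\mathcal{X}\times_\phi D$, one inside $\mathbb{P}$) commutes on the nose and not just up to the trivialization choice in Remark \ref{remark_normal_sheaf_2}. I would handle this by working on $V$ where everything is a genuine line bundle, writing the Cartier multiple $mY$ via Construction \ref{construction_frob_split} if needed, and tracing a representative cocycle for $g|_U\in H^0(U,N_{U/V})$ through the tangent sheaf sequence in both interpretations; since $g\in L(\Delta)$ represents the actual tangent vector to $B$ by construction of $\mathcal{X}$, the two cocycles coincide. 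Everything else is then formal from the commutative diagrams in Section \ref{section_Kod_Spencer_maps} together with left-exactness of $H^0$, so the displayed isomorphism for the kernel follows.
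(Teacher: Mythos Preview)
Your approach is essentially the same as the paper's: build a commutative ladder between the two tangent-sheaf sequences over the smooth locus $U\subset V$ and read off that $\kappa_f$ is the restriction of $\kappa_{\mathbb{P},f}$. The paper makes the comparison concrete by using the differential $(pr_1)_*:T_{W|U}\to T_{V|U}$ of the first projection $pr_1:\mathcal{X}\to\mathbb{P}$ (with $W=(V\times B)\cap\mathcal{X}$), which immediately shows the map is the identity on $T_U$ and on the target $H^1(U,T_U)$ while inducing the inclusion $L(\Delta)\subset L(C(\Delta))$ on normal bundles; this replaces your cocycle-tracing paragraph, and in particular Construction~\ref{construction_frob_split} is not needed here.
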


\begin{proof} \label{construction_w_alpha_ker_kappa}
	\normalfont
	The following reduction step is similar to (\cite[Ch.2.1]{Koe91} and \cite[Lemma 6.15]{Voi03}):	Let $V \subset \mathbb{P}$ be the union of all torus orbits of dimension $\geq n-2$. Then $U = U_f:= V \cap Y_f$ is smooth and $\codim_{Y_f}(Y_f \setminus U_f) \geq 2$ for every $f \in B$. Let 
	\[ W := (V \times B) \cap \mathcal{X}.  \]
	Consider the differential 
	\[ (pr_1)_*: T_{W \vert{U}} \rightarrow T_{V \vert{U}}  \]
	of the first projection. All the sheaves we consider are reflexive, therefore there is no difference in working with $Y$, $\mathcal{X}$ and $\mathbb{P}$. $pr_1$ restricts to an isomorphism
	\[ Y_f \times \{f\} \rightarrow Y_f,  \]
	thus $(pr_1)_*$ restricts to the identity on $T_Y$. The map
	\[ (pr_1)_*: N_{Y/\mathcal{X}} \cong H^0(Y,N_{Y/\mathcal{X}}) \otimes \mathcal{O}_Y \subset H^0(Y,N_{Y/\mathbb{P}}) \otimes \mathcal{O}_Y \rightarrow   N_{Y/\mathbb{P}} \]
	is given by multiplication of sections. In effect we obtain a commutative diagram
	\[  
	\begin{tikzcd}[scale cd = 0.9]
	0 \arrow[r] & H^0(Y,T_Y) \arrow[r] \arrow{d}{id} & H^{0}(Y,T_{\mathcal{X} \vert{Y}}^{**}) \arrow[r] \arrow[swap]{d}{(pr_1)_*} & H^{0}(Y, N_{Y/\mathcal{X}}) \arrow{d} \arrow{r}{\kappa_f} & \Ext_{\mathcal{O}_Y}^1(\Omega_Y^1, \mathcal{O}_Y)  \arrow{d}{id} \\
	0 \arrow[r] & H^0(Y,T_Y) \arrow[r] & H^{0}(Y, T_{\mathbb{P} \vert{Y}}^{**}) \arrow[r] & H^{0}(Y, N_{Y/\mathbb{P}}) \arrow{r}{\kappa_{\mathbb{P},f}} & \Ext_{\mathcal{O}_Y}^1(\Omega_Y^1, \mathcal{O}_Y)
	\end{tikzcd}
	\]	
	and
	\[ \ker(\kappa_f) \cong H^0(Y,T_{\mathcal{X} \vert{Y}}^{**}) \cong H^0(Y,T_{\mathbb{P} \vert{Y}}^{**}) \cap L(\Delta) \cong \ker(\kappa_{\mathbb{P},f}) \cap L(\Delta). \]
\end{proof}

The $x_i \frac{\partial f}{\partial x_i}$ obviously belong to $L(\Delta)$ but the $w_{-\alpha}(f)$ need not have support on $\Delta$ as the folowing example shows:

\begin{example} \label{example_elliptic_surface_not_can_closed}
	\normalfont
	Consider the polytope
	\begin{align*}
		\Delta = \langle \begin{pmatrix} -1\\-1\\-1 \end{pmatrix},  \begin{pmatrix}  5\\1\\3 \end{pmatrix}, \begin{pmatrix} -1\\10\\0 \end{pmatrix}, \begin{pmatrix} -1\\-1\\0 \end{pmatrix}  \rangle
	\end{align*}	
	$\Delta$ has $3$ interior lattice points, $F(\Delta)$ is $1$-dimensional and $C(\Delta)$ has the additional vertex $(1,-1,1)$. We obtain a family of elliptic surfaces $\mathcal{X} \rightarrow B$. There are $7$ roots
	\[ R(N,\Sigma) = \{ \begin{pmatrix} -3\\-1\\-2 \end{pmatrix},  \begin{pmatrix}  -1\\-4\\-1 \end{pmatrix}, \begin{pmatrix} -1\\-3\\-1 \end{pmatrix}, \begin{pmatrix} -1\\-2\\-1 \end{pmatrix}, \begin{pmatrix} -1\\-1\\-1 \end{pmatrix}, \begin{pmatrix} -1\\0\\-1 \end{pmatrix}, \begin{pmatrix} 0\\-1\\0 \end{pmatrix}  \}  \]
	and one root $(-1,0,-1)$ not belonging to $R(N,\Sigma_{\Delta})$. The column vector $-\alpha = (1,0,1)$ belongs to the facet
	\[ \Gamma_{-\alpha} = \langle  \begin{pmatrix}  5\\1\\3 \end{pmatrix}, \begin{pmatrix} -1\\10\\0 \end{pmatrix}, \begin{pmatrix} -1\\-1\\0 \end{pmatrix},\begin{pmatrix} 1\\-1\\1 \end{pmatrix}  \rangle \] 
	of $C(\Delta)$. The vertex $(-1,-1,-1)$ does not lie on $\Gamma_{-\alpha}$ and
	\[ -\alpha + (-1,-1,-1) = (0,-1,0) \notin \Delta.  \]
	Thus only $6$ of the roots in $R(N,\Sigma)$ reduce the number of moduli.
\end{example}

\textit{(Proof of Theorem \ref{theorem_numb_mod_not_can_closed})} \\ \\
	The proof is rather technical. By Proposition \ref{proposition_ker_kappa_f_intersection_with_Newton_pol} 
	\[ \ker(\kappa_f) = \ker(\kappa_{\mathbb{P},f}) \cap L(\Delta)   \]
	and $R(N,\Sigma_{\Delta}) \subset R(N,\Sigma_{C(\Delta)})$ by Lemma \ref{lemma_compare_roots_Sigma_CDelta}. Let $R:= R(N,\Sigma_{C(\Delta)}) \setminus R(N,\Sigma_{\Delta})$. The Theorem is a consequence of the three points below.
	\begin{itemize}
		\item $\alpha \in R(N,\Sigma_{\Delta}) \Rightarrow w_{-\alpha}(f) \in L(\Delta)$.
		\item $\alpha \in R \Rightarrow w_{-\alpha}(f) \notin L(\Delta)$.
		\item Varying $\alpha \in R$ the $w_{-\alpha}(f)$ are linearly independent in $L(C(\Delta))/L(\Delta)$.
	\end{itemize}
	The necessity of the first two points is obvious and the last point assures that no linear combination of the $w_{-\alpha}(f)$, where $\alpha \in R$, lies in $\ker(\kappa_f)$.

	\leavevmode
	\\
	\underline{First point:} To $\alpha \in R(N,\Sigma_{\Delta})$ is associated both $\Gamma_{-\alpha} \leq \Delta$ and $\Gamma_{-\alpha}' \leq C(\Delta)$. We show
	\begin{align} \label{thm_nofmodnotcancl_help_formula}
		\Gamma_{-\alpha} \subset \Gamma_{-\alpha}',
	\end{align}
	since then for $m \in M \cap \Delta$, $m \notin \Gamma_{-\alpha}$ we get $m - \alpha \in \Delta$, that is $w_{-\alpha}(f) \in L(\Delta)$. Concerning (\ref{thm_nofmodnotcancl_help_formula}): Given $n_i \in \Sigma_{\Delta}[1]$ with $\langle \alpha, n_i \rangle = 1$ and $n_j \in \Sigma_{C(\Delta)}[1]$ with $\langle \alpha, n_j \rangle =1$ then $n_i = n_j$ by (\ref{inclusions_rays_support_vectors_convex_span}). It follows $\Gamma_{-\alpha} \subset \Gamma_{-\alpha}'$ since
	\[ \Min_{C(\Delta)}(n_i) = \Min_{\Delta}(n_i). \]
	Thus $\Gamma_{-\alpha} \subset \Gamma_{-\alpha}'$.  \\ \\
	\underline{Second point:} There is a facet $\Gamma_{-\alpha}$ of $C(\Delta)$ such that
	\[ m - \alpha \in C(\Delta) \quad \textrm{for } m \in C(\Delta) \cap M, \quad m \notin \Gamma_{-\alpha}. \]
	First assume that $\Gamma_{-\alpha} \cap \Delta$ is also a facet of $\Delta$. There is 
	$n_j \in \Sigma_{\Delta}[1] \setminus \{n_{\Gamma_{-\alpha}} \}$ with $\langle \alpha, n_j \rangle > 0$ since $\alpha \notin R(N,\Sigma_{\Delta})$. Given $m \in Vert(\Gamma_j)$, then $m \in \Supp(f)$ and $m-\alpha \notin \Delta$ since 
	\[ \langle m - \alpha, n_j \rangle < \Min_{\Delta}(n_j).  \]
	$\Rightarrow w_{-\alpha}(f) \notin L(\Delta)$. Assume that $\Gamma_{-\alpha} \cap \Delta$ is a face of $\Delta$ of dimension $< n-1$. The convex span
	\[ \langle m \in Vert(\Delta) \, | \, m - \alpha \notin \Delta \rangle  \]
	is of dimension $\geq n-1$. $\Rightarrow$ there is $m \in Vert(\Delta)$ with 
	\[  \quad m \notin \Gamma_{-\alpha}, \quad m - \alpha \notin \Delta, \]
	that is $w_{-\alpha}(f) \notin L(\Delta)$.
	\leavevmode
	\\ \\
	\underline{Third point:} Given a fixed facet $\Gamma = \Gamma_{-\alpha}$ of $C(\Delta)$ all 
	\[ \alpha \in R(N,\Sigma_{C(\Delta)}) \setminus R(N,\Sigma_{\Delta})  \]
	with $\Gamma_{-\alpha} = \Gamma$ build the lattice points on a lattice polytope $P \subset M_{\mathbb{R}}$. \\
	Given $\alpha \in Vert(P)$ there is $m \in \Supp(f)$ such that $x^{m-\alpha}$ does not appear in the support of any other $w_{-\alpha'}(f)$. Thus $w_{-\alpha}(f)$ does not appear with nonzero coefficient in any relation between the $w_{-\alpha'}(f)$. We then break down $P$ vertex by vertex. \\ \\
	Let $\Gamma_1,\Gamma_2$ be two different facets of $C(\Delta)$ and $\alpha_1,\alpha_2\in R(N,\Sigma_{C(\Delta)}) \setminus R(N,\Sigma_{\Delta})$ roots to these facets. Given a relation in
	\[ L(C(\Delta))/L(\Delta) \]
	in which both $w_{-\alpha_1}(f)$ and $w_{-\alpha_2}(f)$ appear with nonzero coefficients there is $v \in \Supp(f)$ with
	\[ \langle v-\alpha_1,n_1 \rangle < \Min_{\Delta}(n_1), \quad v-\alpha_1+ \alpha_2 \in M \cap \Delta.  \]
	Then
	\[ \langle v- \alpha_1+\alpha_2, n_1 \rangle \geq \Min_{\Delta}(n_1),  \]
	but $\langle \alpha_2,n_1 \rangle \leq 0$ since $\alpha_2$ is a root for $n_2 \neq n_1$, a contradiction.	
\qed	
	
\begin{remark}
	\normalfont
	Given a common toric resolution of singularities
	\[\begin{tikzcd}
	& \mathbb{P}_{\tilde{\Sigma}} \arrow[swap]{dl} \arrow{dr} \\
	\mathbb{P} &&  \mathbb{P}_{\Delta}
	\end{tikzcd}
	\]
	there is a deformation of smooth toric hypersurfaces $\tilde{X} \rightarrow B$. Take the Kodaira-Spencer map $\tilde{\kappa}_f$. Then $R(N,\tilde{\Sigma}) \subset R(N,\Sigma_{\Delta})$ and $\ker(\tilde{\kappa}_f)$ is gotten as $\ker(\kappa_f)$ but with $\alpha \in R(N,\tilde{\Sigma})$ instead of $\alpha \in R(N,\Sigma_{\Delta})$.
\end{remark}


\section{The number of moduli for subfamilies} \label{section_subfamilies}

\begin{remark} \label{remark_arbitrary_subfamilies_number_of_moduli}
	\normalfont
	Let $\Delta$ be an $n$-dimensional polytope with $F(\Delta) \neq \emptyset$ and $A \subset M \cap \Delta$ a subset containing all vertices of $\Delta$. Let
	\[ f:=   \sum\limits_{m \in A} a_m x^{m}.  \]
	Then  for $(a_m)_{m \in A}$ generic $f$ is nondegenerate with respect to $\Delta$ (see \cite[Ch.10]{GKZ94} and \cite[Prop.2.16]{Bat03}). Denote the resulting open subset of $\mathbb{C}^{|A|}$ by $U_A$ and the restriction of $\mathcal{X}$ to $U_A$ by $\mathcal{X}_A$. 
	Taking the quotient by the Laurent polynomials
	\[ f, \quad x_i \frac{\partial f}{\partial x_i} \quad i=1,...,n \]
	will reduce the number of moduli of the subfamily $\mathcal{X}_A \rightarrow U_A$ by $n+1$. 
	Concerning the $w_{-\alpha}(f)$ it seems to be difficult to decide in general if there are $c_{\alpha} \in \mathbb{C}$ with
	\[ \sum\limits_{\alpha} c_{\alpha} w_{-\alpha}(f) \in T_{U_A,f} \setminus \{0\}.  \]
	Therefore we restrict to some special cases:
\end{remark}	

\begin{example} \label{example_subfamily_to_monomials}
	\normalfont
	Assume that $\Delta$ is an $n$-dimensional simplex with $F(\Delta) \neq \emptyset$ and $A$ equals the set of vertices of $\Delta$. Then by varying the coefficients to $A$ we obtain a family with $\kappa = 0$. This generalizes:
\end{example}

\begin{lemma} \label{lemma_number_of_moduli_subfam_to_vertices}
	Given the conditions of Theorem \ref{theorem_number_of_moduli} let $A$ denote the set of vertices of $\Delta$. Assume that for every facet
	\[ \Gamma = \Delta \cap \{ x \in M_{\mathbb{R}}| \, \langle x,n_{\Gamma} \rangle = b_{\Gamma} \}  \]
	with $n_{\Gamma} \in \Sigma_{\Delta}[1]$ there is no vertex of $\Delta$ lying in the plane
	\[ H_{\Gamma,+1} :=  \{ x \in M_{\mathbb{R}}| \, \langle x,n_{\Gamma} \rangle = b_{\Gamma} + 1 \}  \]
	Then the subfamily to $A$ has
	\[ \# \{\textrm{vertices}\} - n-1  \]
	number of moduli.
\end{lemma}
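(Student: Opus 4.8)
The plan is to compute the number of moduli $\dim Im(\kappa_f^0)$ of $\mathcal{X}_A\to U_A$ (for $f\in U_A$ general, $\kappa_f^0$ the restriction of $\kappa_f$ to the subfamily) by pinning down $\ker(\kappa_f^0)$ exactly. Since $H^0(Y,N_{Y/\mathcal{X}_A})\cong L(A)/\mathbb{C}\cdot f$ has dimension $\#\{\text{vertices}\}-1$, and $x_1\frac{\partial f}{\partial x_1},\dots,x_n\frac{\partial f}{\partial x_n}$ always lie in $\ker(\kappa_f^0)$, it is enough to prove
\[
\ker(\kappa_f^0)=\Big\langle\, x_1\tfrac{\partial f}{\partial x_1},\dots,x_n\tfrac{\partial f}{\partial x_n}\,\Big\rangle,
\]
which has dimension $n$: indeed $f,x_1\tfrac{\partial f}{\partial x_1},\dots,x_n\tfrac{\partial f}{\partial x_n}$ are linearly independent in $L(A)$ because the $(n+1)\times|A|$ matrix with columns $(1,m)$, $m\in A$, has rank $n+1$ (the vertices of the $n$-dimensional $\Delta$ affinely span $M_{\mathbb R}$), and scaling the columns by the nonzero $a_m$ preserves the rank, as already noted in Remark~\ref{remark_arbitrary_subfamilies_number_of_moduli}. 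Granting the displayed equality, the number of moduli is $(\#\{\text{vertices}\}-1)-n=\#\{\text{vertices}\}-n-1$.

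To prove the equality I would first reduce, exactly as in the commutative diagram in the proof of Proposition~\ref{proposition_ker_kappa_f_intersection_with_Newton_pol} but with $U_A$ in place of $B$, to $\ker(\kappa_f^0)=\ker(\kappa_f)\cap\big(L(A)/\mathbb{C}\cdot f\big)$. By Theorem~\ref{theorem_numb_mod_not_can_closed} any class in $\ker(\kappa_f)$ is represented by $\sum_i c_i\,x_i\tfrac{\partial f}{\partial x_i}+\sum_{\alpha\in R(N,\Sigma_\Delta)}d_\alpha\,w_{-\alpha}(f)$, and since $f$ and the $x_i\tfrac{\partial f}{\partial x_i}$ are supported on $A$, this class lies in $L(A)/\mathbb{C}\cdot f$ iff $\sum_\alpha d_\alpha w_{-\alpha}(f)\in L(A)$. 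So everything comes down to showing that the $w_{-\alpha}(f)$, $\alpha\in R(N,\Sigma_\Delta)$, are linearly independent modulo $L(A)$ for general $f$.

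Here the hypothesis on $\Delta$ enters. Fix $\alpha\in R(N,\Sigma_\Delta)$ with associated facet $\Gamma_{-\alpha}=\Delta\cap\{\langle x,n_\Gamma\rangle=b_\Gamma\}$, $n_\Gamma=n(\alpha)$; then for $m\in\Delta\cap M$ one has $ht_{-\alpha}(m)=\langle m,n_\Gamma\rangle-b_\Gamma$ (compare (\ref{formula_height_and_scalar_product}) and the ``First point'' of the proof of Theorem~\ref{theorem_numb_mod_not_can_closed}, where it is checked that the height with respect to $C(\Delta)$ agrees with the one with respect to $\Delta$ for such $\alpha$). If $m\in A$ is a vertex with $ht_{-\alpha}(m)>0$, then $m\notin H_{\Gamma,+1}$ by hypothesis, hence $ht_{-\alpha}(m)\ge 2$. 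Using $\langle\alpha,n(\alpha)\rangle=1$ and $\langle\alpha,n_j\rangle\le 0$ for the other $n_j\in\Sigma_\Delta[1]$, one checks $m-k\alpha\in\Delta$ for $0\le k\le ht_{-\alpha}(m)$ and $m-ht_{-\alpha}(m)\cdot\alpha\in\Gamma_{-\alpha}$; by convexity the segment joining $m$ to this last point lies in $\Delta$, and since $ht_{-\alpha}(m)\ge 2$ the point $m-\alpha$ is in its relative interior, hence not a vertex of $\Delta$. Therefore, for $f$ with all $a_m\ne 0$, every monomial of $w_{-\alpha}(f)=\sum_{m\in A}ht_{-\alpha}(m)\,a_m\,x^{m-\alpha}$ has exponent outside $A$, i.e.\ $\Supp(w_{-\alpha}(f))\cap A=\emptyset$. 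Consequently a linear combination $\sum_\alpha d_\alpha w_{-\alpha}(f)$ is again supported off $A$, so if it lies in $L(A)$ it must vanish, whence all $d_\alpha=0$ by the linear independence of the $w_{-\alpha}(f)$ themselves (part of the basis of $\ker(\kappa_f)$ in Theorem~\ref{theorem_numb_mod_not_can_closed}).

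The main obstacle is the combinatorial heart of the third paragraph: computing $\Supp(w_{-\alpha}(f))$ and seeing that it avoids the vertex set, which is precisely where the assumption ``no vertex of $\Delta$ lies in $H_{\Gamma,+1}$'' is needed — it upgrades $ht_{-\alpha}(m)=1$ to $ht_{-\alpha}(m)\ge 2$, so that $m-\alpha$ falls into the interior of a segment of $\Delta$ rather than onto a vertex. The remaining ingredients — the reduction $\ker(\kappa_f^0)=\ker(\kappa_f)\cap L(A)$, the harmless genericity (restrict to the dense open subset of $U_A$ of nondegenerate $f$ with all $a_m\ne 0$, which does not change the number of moduli), and the rank count for $f,x_1\tfrac{\partial f}{\partial x_1},\dots,x_n\tfrac{\partial f}{\partial x_n}$ — are routine.
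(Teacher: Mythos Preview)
Your argument is correct and follows the same route as the paper: both reduce to showing that for every vertex $m$ with $ht_{-\alpha}(m)>0$ the point $m-\alpha$ is not a vertex, and both deduce this from the segment $m,\,m-\alpha,\,m-2\alpha\in\Delta$ together with the hypothesis excluding $ht_{-\alpha}(m)=1$. The paper phrases the key step contrapositively (if $m$ and $m-\alpha$ were both vertices then $m-\alpha\in\Gamma_{-\alpha}$, forcing $m\in H_{\Gamma_{-\alpha},+1}$), while you argue directly; your write-up is also more explicit about the surrounding reductions ($\ker(\kappa_f^0)=\ker(\kappa_f)\cap L(A)$ and the linear-independence step), which the paper leaves to Remark~\ref{remark_arbitrary_subfamilies_number_of_moduli} and the context.
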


\begin{proof}
	Given a root $\alpha$ and a lattice point $m$ on $\Delta$ then $m-\alpha$ lies exactly one step closer to the facet $\Gamma_{-\alpha}$ (and not closer to any other facet). Assume that $m$ and $m-\alpha$ are vertices of $\Delta$. Then $m-\alpha$ lies on $\Gamma_{-\alpha}$ since else $m, m-2 \alpha \in \Delta$ and $m-\alpha$ would not be a vertex. But then $m \in H_{\Gamma_{-\alpha},+1}$ contradicting the assumption.
\end{proof}	

\end{document}